\documentclass[a4paper, 11pt]{amsart}
\usepackage{amssymb,amsmath, stmaryrd}
\usepackage[dvipdfmx]{graphicx}
\usepackage[all]{xy}
\usepackage{bm}
\usepackage[top=25truemm,bottom=25truemm,left=25truemm,right=25truemm]{geometry}
\usepackage{color}
\usepackage{ulem}

\makeatletter
\@namedef{subjclassname@2010}{%
  \textup{2010} Mathematics Subject Classification}
\makeatother

\newcommand{\A}{\mathbb{A}}

\newcommand{\N}{\mathbb{N}}

\newcommand{\R}{\mathbb{R}}
\newcommand{\Z}{\mathbb{Z}}

\newcommand{\rank}{\operatorname{rank}}

\newcommand{\Aut}{\operatorname{Aut}}
\newcommand{\Gap}{\operatorname{Gap}}

\newcommand\ord{\operatorname{ord}}

\newtheorem{thm}{Theorem}[section]
\newtheorem{prop}[thm]{Proposition}
\newtheorem{lem}[thm]{Lemma}
\newtheorem{cor}[thm]{Corollary}

\newtheorem{example}[thm]{Example}
\newtheorem{question}[thm]{Question}

\newtheorem{ex}[thm]{Example}

\begin{document}
\title[Quasi-factorially closed subalgebras of Laurent polynomial rings]{Quasi-factorially closed subalgebras of Laurent polynomial rings}
\author{Shinya Kumashiro and Takanori Nagamine}
\address[S. Kumashiro]
{Department of Mathematics, Osaka Institute of Technology, 5-16-1 Omiya, asahi-ku, Osaka, 535-8585, Japan}
\email{shinya.kumashiro@oit.ac.jp}
\address[T. Nagamine]
{Department of Mathematics, College of Science and Technology\\
Nihon University, 1-8-14 Kanda-Surugadai, Chiyoda-ku, Tokyo 101-8308, Japan}
\email{nagamine.takanori@nihon-u.ac.jp}
\date{\today}
\subjclass[2020]{Primary: 13F15; Secondary 13A02, 14M25, 20M25}
\keywords{factorially closed subalgebras; pre-factorially closed  subalgebras; quasi-factorially closed subalgebras; Laurent polynomial ring; monoid algebra; numerical semigroup; normality}
\thanks{The work of the first author was supported by JSPS KAKENHI Grant Number JP24K16909. The work of the second author was supported by JSPS KAKENHI Grant Number JP25K17239.}

\begin{abstract}
Let $R$ be a domain and $B=R[x_1^{\pm1},\ldots,x_n^{\pm1}]$ the Laurent polynomial ring over $R$. In this paper we study pre-factorially closed (pfc) and quasi-factorially closed (qfc) $R$-subalgebras of $B$, which generalize the notion of factorially closed subalgebras. 

We first establish a localization criterion for the qfc property. Using this criterion, we investigate monoid algebras $A=R[M]$ associated with submonoids $M\subset \mathbb{Z}^n$. We prove that $R[M]$ is qfc in $B$ if and only if the group generated by $M$ is a direct summand of $\mathbb{Z}^n$. This provides a complete characterization of the qfc property in terms of the lattice structure of the associated group. As a consequence, when $n=1$ and $M\subset\mathbb{N}$, the algebra $R[M]$ is qfc in $B$ precisely when $M$ is a numerical semigroup. 
For a general $R$-subalgebra $A\subset B$, we introduce an invariant $\mathrm{Gap}(A)$. We show that if $\mathrm{Gap}(A)$ is finite, then $A$ is qfc in $B$. 

Moreover, we clarify how the pfc and qfc conditions are related to other notions that naturally appear for subalgebras, such as retracts, being algebraically closed in $B$, and normality. 
\end{abstract}
\maketitle

\setcounter{section}{0}

\section{Introduction}
Throughout the paper, all rings are commutative with unity, any domain is understood to be an integral domain and $\N$ contains $0$.

Let $R$ be a domain and let $A\subset B$ be $R$-algebras.
We write $R^*$ for the group of units of $R$ and $Q(R)$ for the quotient field of $R$.
For an integer $n\ge 0$, $R^{[n]}$ (resp. $R^{[\pm n]}$) denotes the polynomial ring (resp. Laurent polynomial ring) in $n$ variables over $R$.

Recall that $A$ is {\bf factorially closed}, or {\bf fc}, in $B$ if for $a,b\in B\setminus\{0\}$ the condition that $ab\in A$ implies $a,b\in A$. By the definition, it is easy to see that if $B$ is a UFD, then so is $A$ (Proposition \ref{ac and fc}). On the other hand, if $A^*\subsetneq B^*$, then $A$ cannot be fc in $B$ (\cite[Lemma 3 (4)]{CGM15}). With this observation, it is natural to generalize the notion of fc, and actually the following two notions are introduced in the literatures:

\begin{enumerate}
\item $A$ is {\bf pre-factorially closed}, or {\bf pfc}, in $B$ if for $a,b\in B\setminus\{0\}$ the condition $ab\in A$ implies $ua,u^{-1}b\in A$ for some $u\in B^*$.
\item $A$ is {\bf quasi-factorally closed}, or {\bf qfc}, in $B$ if for $a,b\in B\setminus\{0\}$ the condition $ab\in A$ implies $ua,vb\in A$ for some $u,v\in B^*$. 
\end{enumerate}

Clearly, implications ``fc''$\implies$``pfc''$\implies$``qfc'' hold. We note that $A$ is fc in $B$ if and only if $A$ is qfc (or pfc) in $B$ and $A^*=B^*$ (Lemma \ref{fc}). Thus, qfc and pfc are natural generalizations of the fc property in the study of rings $B$ with many units.

The pfc property was introduced by Cohn \cite{Coh68}\footnote{Cohn refers to the notion we call fc in this paper as {\bf inert}. However, in the footnote on page 259 of \cite{Coh68}, he states that an inert subalgebra should actually be called a {\bf strongly inert} subalgebra and that the notion of inert should be the pfc property as defined above. Since the terms inert and strongly inert are used inconsistently in the literature, we use the terminology pfc to avoid confusion.} and studied in \cite{AAZ92,AC22,Coh68}. 
The notion of qfc was introduced by Chakraborty, Gurjar and Miyanishi \cite{CGM15}, and further studied in \cite{CGM15,CGM19,Mon21}. In the setting of polynomial rings and formal power series rings, Chakraborty, Gurjar and Miyanishi \cite{CGM15} obtained strong structure theorems for fc and qfc subalgebras in low dimensions:

\begin{thm} 
Let $k$ be an algebraically closed field of characteristic zero. Then the following assertions hold true.
\begin{enumerate}
\item
If $n\leq3$, then every fc subalgebra of $k^{[n]}$ is isomorphic to $k^{[d]}$ for some $d\geq0$ {\rm(\cite[Theorem 1]{CGM15})}. 
\item
Let $k\subsetneq A \subsetneq B:=k\llbracket x,y\rrbracket$ be a qfc subalgebra of $B$. If $A$ is a noetherian complete local ring, then $A=k\llbracket u\rrbracket$ {\rm(\cite[Theorem 7]{CGM15})}.
\item
Let $k\subset A \subset B:=k\llbracket x,y,z\rrbracket$ be a qfc subalgebra of $B$. If $A$ is a 2-dimensional noetherian complete local ring, then $A=k\llbracket u,v\rrbracket$ {\rm(\cite[Theorem 8]{CGM15})}.
\end{enumerate}
\end{thm}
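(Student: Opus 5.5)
The statement collects three results of Chakraborty, Gurjar and Miyanishi, and I would prove each part separately. What the three parts share is that fc or qfc forces the subalgebra to be algebraically (or analytically) closed in the ambient ring and to inherit factoriality. The dimension bounds are small enough that a classification of low-dimensional factorial rings then applies.

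\emph{Part (1).} Let $A\subset B=k^{[n]}$ be fc and $d=\operatorname{trdeg}_k A\le n\le 3$. I would proceed in four steps.
\begin{enumerate}
\item Show that $A$ is algebraically closed in $B$, and that $A$ is a UFD with $A^*=B^*=k^*$. Both follow from the fc hypothesis, as recorded in Proposition \ref{ac and fc} and Lemma \ref{fc}.
\item If $d=0$, then $A=k$.
\item If $d=n$, then every $b\in B$ is algebraic over $A$, so $b\in A$ and $A=B$.
\item If $d=1$, then $A$ is a one-dimensional algebraically closed subalgebra of a polynomial ring. I would invoke Zaks' theorem to get $A=k[f]$: such a subalgebra is a Dedekind domain, it is a UFD with trivial units, and $\operatorname{Spec}A$ is dominated by $\mathbb A^n$, hence rational by Lüroth.
\end{enumerate}
The remaining case is $n=3$, $d=2$. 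Here $A$ is finitely generated, since by Zariski's theorem an algebraically closed subalgebra of transcendence degree $\le 2$ is affine. So $X=\operatorname{Spec}A$ is a normal factorial affine surface with trivial units, and it admits a dominant morphism from $\mathbb A^3$. Restricting this morphism to a general plane gives a dominant map $\mathbb A^2\to X$, so $\bar\kappa(X)=-\infty$ and $X$ carries an $\mathbb A^1$-fibration.

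\emph{Main obstacle.} I expect the hard step to be showing that $X$ is smooth, or at least has no quotient singularities. Once that is known, the Miyanishi--Sugie/Fujita characterization gives $X\cong\mathbb A^2$: a smooth factorial affine surface with trivial units and $\bar\kappa=-\infty$ is $\mathbb A^2$. To exclude singular points I would use fc again. Take $f\in A$ vanishing on a curve through a singular point. Factor $f$ in $B$; by fc the prime factors lie in $A$. This should force the fibres of $\mathbb A^3\to X$ to behave like those of a smooth morphism in codimension one. Combined with factoriality, it should rule out the cyclic-quotient singularities $\mathbb A^2/\mu_m$, which are the only candidates for a surface with $\bar\kappa=-\infty$ and an $\mathbb A^1$-fibration.

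\emph{Parts (2) and (3).} The rings are complete local, so I would work with parameters. In (2), $A$ is a noetherian complete local subring of $k\llbracket x,y\rrbracket$ with $\dim A\ge1$. First, qfc together with the fact that units of $B$ are the elements of nonzero constant term should give that $\mathfrak m_A$ is generated by elements of minimal order. Next, choose $u\in\mathfrak m_A$ irreducible in $B$. Taking any $a\in A$ and writing its factorization in $B$, qfc shows that each irreducible factor lies in $A$ up to a unit of $B$. From this one gets $A\subset k\llbracket u\rrbracket$ up to integral closure, then equality by completeness; if $\dim A=2$, the same argument yields $A=B$, contradicting $A\subsetneq B$. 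For (3), I would pick a system of parameters $u,v$ of $A$ and show that $A$ is normal, in fact regular. Regularity should come from qfc: an element of $\mathfrak m_A\setminus\mathfrak m_A^2$ that factors in $B$ must have its factors in $A$ up to units of $B$, and this forces the embedding dimension of $A$ to equal $2$, so $A=k\llbracket u,v\rrbracket$ by Cohen's structure theorem. The delicate point is again that step: passing from qfc in $B$ to regularity of $A$.
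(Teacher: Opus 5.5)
The paper gives no proof of this theorem. It is quoted from \cite{CGM15} (Theorems~1, 7 and 8 there) as background, so your proposal has to stand on its own, and it does not.

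\textbf{Part (1).} The easy cases are essentially fine: $d=0$, $d=n$, and $d=1$ via Zariski's finiteness theorem together with Zaks' theorem. The case $n=3$, $d=2$, however, contains a false step. Consider $X=\{x^2+y^3+z^5=0\}\cong\mathbb{A}^2/\widetilde{I}$, where $\widetilde{I}$ is the binary icosahedral group. This surface has every property you actually establish: it is normal, its coordinate ring is a UFD with trivial units, and it is dominated by $\mathbb{A}^2$ (hence by $\mathbb{A}^3$). Yet $X\not\cong\mathbb{A}^2$.

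\begin{itemize}
\item Its singular point is a \emph{non-cyclic} quotient singularity.
\item $X$ carries no $\mathbb{A}^1$-fibration at all. Pulling one back to $\mathbb{A}^2$ would make $\widetilde{I}$ act faithfully on a rational curve, i.e.\ embed in $\mathrm{PGL}_2$, which is impossible.
\end{itemize}

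So both inferences ``dominated by $\mathbb{A}^2$ $\Rightarrow$ $\mathbb{A}^1$-fibration'' and ``cyclic quotient singularities are the only candidates'' fail for singular $X$. Factoriality also cannot do the work you assign to it, because the $E_8$ point is factorial. The entire content of this case is excluding such a point, and that is exactly your sentence ``this should force the fibres \dots'', which is not an argument.

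Fc does give usable facts:
\begin{itemize}
\item prime elements of $A$ stay prime in $B$;
\item no fibre of $\mathbb{A}^3\to X$ over a closed point $p$ contains a surface. If a prime $h\in B$ divided every element of $\mathfrak m_p$, fc would give $h\in A$ and $\mathfrak m_p\subseteq hA$, contradicting $\operatorname{ht}\mathfrak m_p=2$.
\end{itemize}
You still need a genuine argument turning these facts into smoothness of $X$, for instance a topological one using the local fundamental group $\widetilde{I}$.

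\textbf{Part (2).} The decisive steps are only asserted. Nothing you write makes the other irreducible factors associates of $u$, so ``$A\subset k\llbracket u\rrbracket$ up to integral closure'' is unsupported, and ``the same argument yields $A=B$'' is not an argument. The missing idea is short.

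\begin{itemize}
\item Since $A\to B\to k$ is onto, $\mathfrak m_A=A\cap\mathfrak m_B$ and $k$ is a coefficient field of $A$.
\item If $\dim A\ge 2$, then $xB\cap A\neq 0\neq yB\cap A$: by Cohen's structure theorem, a complete local domain of dimension $\ge 2$ admits no injective local map into $k\llbracket y\rrbracket\cong B/xB$. Applying qfc to $a=x\cdot(a/x)$ and $a'=y\cdot(a'/y)$ gives $\alpha x,\beta y\in A$ with $\alpha,\beta\in B^*$. Completeness of $A$ then gives $A\supseteq k\llbracket \alpha x,\beta y\rrbracket=B$, contradicting $A\subsetneq B$.
\item If $\dim A=1$, the normalization of $A$ is $k\llbracket t\rrbracket\subset B$. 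Applying qfc to $a=t\cdot(a/t)$ gives $\alpha t\in A$ with $\alpha\in B^*\cap k((t))=k\llbracket t\rrbracket^*$. So $A$ contains a uniformizer $u=\alpha t$ of its normalization, and $A=k\llbracket u\rrbracket$.
\end{itemize}

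\textbf{Part (3).} Here ``this forces the embedding dimension to equal $2$'' is the whole theorem, and you give no argument for it. Qfc alone does not force regularity: by Example~\ref{non-normal}, $k[x^2,x^3]$ is qfc in $k[x^{\pm 1}]$. Any proof must therefore use that units of $B$ do not change orders. As in (1), it must also exclude the factorial singularity $k\llbracket X,Y,Z\rrbracket/(X^2+Y^3+Z^5)$, which no reasoning based on factoriality alone can detect.
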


As an analogue of \cite{CGM15}, in this paper we study pfc and qfc $R$-subalgebras of the Laurent polynomial ring
\[
B=R[x_1^{\pm1},\ldots,x_n^{\pm1}]
\]
over a domain $R$. This is a natural setting where $B$ is a graded ring and $B$ has many units, hence the distinction between fc and pfc/qfc becomes essential. Our guiding problem is the following. 

\begin{question}
{\rm 
What are the pfc/qfc $R$-subalgebras of $R^{[\pm n]}$? 
}
\end{question}

We first establish a localization criterion for the qfc property.
Let $A$ be an $R$-subalgebra of the Laurent polynomial ring $B$ and set $S:=A\cap B^*$.
Then $A$ is qfc in $B$ if and only if $S^{-1}A$ is qfc in $B$ (Proposition \ref{new criterion}).
This criterion is particularly useful when $A$ is a monoid algebra. 
Let $M$ be a submonoid of $\Z^n$. Assume that
\[
A=R[M]:=R[\bm{x}^{\bm{a}} \mid \bm{a}\in M],
\]
where $\bm{x}^{\bm{a}}=x_1^{a_1}\cdots x_n^{a_n}$ for $\bm{a}=(a_1,\ldots,a_n)\in M$.
Then we observe that 
\[
S^{-1}A = R[\langle M\rangle],
\]
where $\langle M\rangle$ denotes the group generated by $M$.
Thus, for monoid algebras, the qfc problem for $R[M]$ can be reduced to the qfc problem for the group algebra $R[\langle M\rangle]$. Our first main result gives a complete criterion in terms of the lattice structure of $\langle M\rangle$.

\begin{thm} \label{higher numerical semigroup}
Let $B\cong R^{[\pm n]}$ be the Laurent polynomial ring in $n$ variables over $R$, $M$ be a submonoid of $\Z^n$ and $\langle M\rangle$ be the group generated by $M$. 
Then the following conditions are equivalent. 
	\begin{enumerate}
	\item $R[M]$ is qfc in $B$. 
	\item $R[\langle M\rangle]$ is qfc in $B$.
	\item $\langle M\rangle$ is a direct summand of $\Z^n$. 
	\item ${\rm Cone}(\langle M\rangle)\cap\Z^n=\langle M\rangle$. 
	\end{enumerate}
\end{thm}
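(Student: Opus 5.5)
The plan is to prove the equivalences in the order (1)$\Leftrightarrow$(2), (3)$\Leftrightarrow$(4), (3)$\Rightarrow$(2), (2)$\Rightarrow$(3). I read ${\rm Cone}(\langle M\rangle)$ as the rational span $\mathbb{Q}\langle M\rangle$ (or the rational cone generated by the group, which is the same thing), so that (4) says $\langle M\rangle$ is saturated in $\Z^n$.

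\textbf{(1)$\Leftrightarrow$(2).} This comes from Proposition \ref{new criterion} applied to $A=R[M]$ with $S=A\cap B^*$; what remains is to identify $S^{-1}A$.
\begin{enumerate}
\item Since $R$ is a domain, $B^*=\{r\bm{x}^{\bm{c}} \mid r\in R^*,\ \bm{c}\in\Z^n\}$.
\item Because $R[M]$ is spanned by monomials $\bm{x}^{\bm{a}}$ with $\bm{a}\in M$, we get $S=\{r\bm{x}^{\bm{a}} \mid r\in R^*,\ \bm{a}\in M\}$.
\item Inverting the $\bm{x}^{\bm{a}}$ with $\bm{a}\in M$ gives exactly $R[\langle M\rangle]$.
\end{enumerate}
Hence $S^{-1}A=R[\langle M\rangle]$, and the criterion gives (1)$\Leftrightarrow$(2).

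\textbf{(3)$\Leftrightarrow$(4).} Put $G=\langle M\rangle$. Condition (4) says $\Z^n/G$ is torsion-free. Since $\Z^n/G$ is finitely generated, torsion-free is equivalent to free, which is equivalent to $G$ being a direct summand.

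\textbf{(3)$\Rightarrow$(2).}
\begin{enumerate}
\item Choose $P\in GL_n(\Z)$ carrying $G$ onto $\Z^d\oplus 0$. The induced monomial change of variables is an $R$-automorphism of $B$. It maps $B^*$ onto $B^*$ and preserves the qfc property, so we may assume $A=R[x_1^{\pm1},\ldots,x_d^{\pm1}]$ and $B=A[x_{d+1}^{\pm1},\ldots,x_n^{\pm1}]$.
\item Grade $B$ by $\Z^{n-d}$ via the exponents of the last $n-d$ variables. Then $A$ is precisely the degree-$0$ component.
\item Fix a monomial (e.g.\ lexicographic) order on $\Z^{n-d}$. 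Because $A$ is a domain, the top and bottom homogeneous components of $ab$ are the products of the top and bottom components of $a$ and $b$, and these are nonzero.
\item Suppose $a,b\neq 0$ and $ab\in A$. Then $ab$ is homogeneous, so its top and bottom degrees coincide. Since the top degree of $ab$ is the sum of the top degrees of $a$ and $b$, and likewise for the bottom degrees, the top and bottom degrees of each factor must coincide. So $a$ and $b$ are homogeneous.
\item Write $a=c\,\bm{x}'^{\bm{e}}$ with $c\in A$. Then $u=\bm{x}'^{-\bm{e}}$ gives $ua\in A$, and similarly $vb\in A$ for a suitable monomial $v$.
\end{enumerate}

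\textbf{(2)$\Rightarrow$(3).} This is the step needing a construction, though I expect it to be short.
\begin{enumerate}
\item If $G$ is not saturated, pick $\bm{e}\in\Z^n\setminus G$ and $m\ge2$ with $m\bm{e}\in G$.
\item Set $a=1-\bm{x}^{\bm{e}}$ and $b=1+\bm{x}^{\bm{e}}+\cdots+\bm{x}^{(m-1)\bm{e}}$. Then $ab=1-\bm{x}^{m\bm{e}}\in R[G]$.
\item Any unit of $B$ has the form $u=r\bm{x}^{\bm{c}}$ with $r\in R^*$. Then $ua=r\bm{x}^{\bm{c}}-r\bm{x}^{\bm{c}+\bm{e}}$ has two distinct monomials with nonzero coefficients.
\item $R[G]$ is spanned by monomials with exponents in $G$. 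So $ua\in R[G]$ would force $\bm{c}\in G$ and $\bm{c}+\bm{e}\in G$, hence $\bm{e}\in G$, a contradiction.
\end{enumerate}
Thus $R[G]$ is not qfc, which proves the contrapositive.

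The main care points are the reduction via a $GL_n(\Z)$-change of coordinates and the homogeneity argument in (3)$\Rightarrow$(2); the rest is bookkeeping with monomials.
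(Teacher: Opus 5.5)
Your proof is correct, but for part of it you take a different route from the paper.

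\textbf{Where you agree with the paper.} Two steps match it. The first is (1)$\Leftrightarrow$(2) via Proposition~\ref{new criterion} and $S^{-1}R[M]=R[\langle M\rangle]$. The second is your contrapositive showing that a non-saturated $\langle M\rangle$ gives a non-qfc $R[\langle M\rangle]$, using $1-\bm{x}^{m\bm{e}}=(1-\bm{x}^{\bm{e}})(1+\bm{x}^{\bm{e}}+\cdots+\bm{x}^{(m-1)\bm{e}})$. This is the paper's proof of (1)$\Rightarrow$(2) in Theorem~\ref{main}.

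\textbf{Where you differ.} The paper never argues (3)$\Leftrightarrow$(4) or (3)$\Rightarrow$(2) directly. It identifies (3) and (4) with conditions (2) and (4) of Theorem~\ref{main}, namely torsion-freeness of $\Z^n/\langle M\rangle$ and $\Gap(S^{-1}R[M])=\emptyset$. It then closes the cycle with (4)$\Rightarrow$(1) there, which rests on the general Theorem~\ref{fin-gap} (hyperplane description of a cone, a large multiple $N\bm{w}$, Lemma~\ref{key lemma}). So in the paper even (4)$\Rightarrow$(3) passes through the qfc property.

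You instead treat (3)$\Leftrightarrow$(4) as pure lattice theory: saturated $\Leftrightarrow$ torsion-free quotient $\Leftrightarrow$ direct summand. You prove (3)$\Rightarrow$(2) by a $GL_n(\Z)$ change of coordinates followed by a top/bottom-degree argument for the $\Z^{n-d}$-grading over $R[x_1^{\pm1},\ldots,x_d^{\pm1}]$. In substance this is Proposition~\ref{coeff} with that coefficient ring, or Proposition~\ref{pfc}.

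\textbf{What each route buys.} Your route is shorter and more elementary. Taking $v=u^{-1}$, it even shows that $R[\langle M\rangle]$ is pfc in $B$ whenever $\langle M\rangle$ is a direct summand. The paper's route buys uniformity, since the same Theorem~\ref{fin-gap} handles subalgebras not generated by monomials. When $\Gap(S^{-1}R[M])=\emptyset$, its proof essentially reduces to yours: then $\R(\langle M\rangle)\cap\Z^n=\langle M\rangle$, and the real work is done by Lemma~\ref{tower} and Proposition~\ref{coeff}.

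\textbf{One small point.} The paper's ${\rm Cone}$ uses real coefficients, so to read (4) as saturation you should add one line. Namely, $\R(\langle M\rangle)\cap\Z^n=\mathbb{Q}\langle M\rangle\cap\Z^n$, because a linear system with rational coefficients that has a real solution also has a rational one.
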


It would be worth comparing Theorem \ref{higher numerical semigroup} with the classical normality criterion for affine semigroup rings.
Recall that when $R$ is a field and $M$ is finitely generated, the monoid algebra $R[M]$ is normal if and only if
\[
{\rm Cone}(M)\cap\langle M\rangle = M.
\]
Thus, the qfc property differs slightly from normality (see, for example, Examples \ref{normal-not pfc} and \ref{non-normal}). In view of this difference, we clarify how the pfc and qfc conditions are related to other notions that naturally appear for subalgebras, such as retracts, being algebraically closed in $B$, and normality. For these definitions, see the beginning of Section 2. 
We obtain the following chain of implications for $R[M]$.
\[
\xymatrix@C=3em@R=2em{
\text{retract} \ar@{=>}[d]_{\text{Proposition\:}\ref{ac and fc}(2)} \ar@{=>}[rr]^{\text{Corollary\:}\ref{ret-pfc}} &&
\text{pfc} \ar@{=>}[rrd] \ar@{=>}[rr]^{\text{Proposition\:}\ref{pfc normal}} && 
\text{normal}\\
\text{algebraically closed} \ar@{=>}[rrrr]_{\text{Corollary\:}\ref{ac}}  \ar@{=>}[urrrr]_{\hspace{3em}\text{Proposition\:}\ref{ac and fc}(4)} & &&&
\text{qfc} 
}
\]
We note that none of the converses holds in general (see Examples \ref{pfc-not ret}, \ref{normal-not pfc} and \ref{non-normal}).

We further prove that when $M$ is a submonoid of $\N$, $R[M]$ is qfc in $B$ if and only if $M$ is a numerical semigroup (Corollary \ref{numerical semigroup}). Here, recall that a submonoid $M\subset\N$ is called a {\bf numerical semigroup} if $\N\setminus M$ is finite. 

\medskip
We next turn to the general case of arbitrary $R$-subalgebras of the Laurent polynomial ring $B$.
For a monoid algebra $R[M]$, the key role was played by the set
\[
{\rm Cone}(M)\cap\Z^n\setminus M.
\]
Motivated by this observation, in Subsection \ref{sec32} we introduce an invariant ${\rm Gap}(A)$ for an arbitrary $R$-subalgebra $A\subset B$.
When $A=R[M]$ is a monoid algebra, the invariant ${\rm Gap}(A)$ coincides with ${\rm Cone}(M)\cap\Z^n\setminus M$. For the detailed definition, see before Lemma \ref{inner}. 
Using this invariant, we obtain the following sufficient condition for the qfc property.

\begin{thm} \label{fin-gap}
Let $B\cong R^{[\pm n]}$ be the Laurent polynomial ring in $n$ variables over $R$ and $A$ be an $R$-subalgebra of $B$. If $\Gap(A)$ is a finite set, then $A$ is qfc in $B$.
\end{thm}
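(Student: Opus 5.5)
The plan is to reduce everything to the monoid case through the localization criterion, Proposition \ref{new criterion}. Put $S:=A\cap B^*$ and let $M:=\{\bm a\in\Z^n \mid \bm x^{\bm a}\in A\}$ be the monoid of exponents of monomials lying in $A$. Since $B^*=\{r\bm x^{\bm a}\mid r\in R^*\}$, the localization $S^{-1}A$ is generated over $A$ by the inverses of the monomials indexed by $M$. By Proposition \ref{new criterion}, it suffices to show that $S^{-1}A$ is qfc in $B$. The first step is to identify $S^{-1}A$ with a group algebra. I expect finiteness of $\Gap(A)$ to give $S^{-1}A=R[\langle M\rangle]$: an arbitrary element of $A$ should, after multiplication by a suitable unit $\bm x^{\bm c}$ with $\bm c\in M$ chosen deep in the cone, have all its exponents in $M$, so that it already lies in $R[M][S^{-1}]$. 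Here I am assuming that $\Gap(A)$ measures the lattice points of the relevant cone whose monomials are missing from $A$, which is how the paper says $\Gap(A)$ specializes for monoid algebras. The point is that only finitely many lattice points of the cone are missing, so shifting by a large multiple of an interior element of $M$ pushes every exponent past all gaps.

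The second step is to show that $\langle M\rangle$ is a direct summand of $\Z^n$, equivalently ${\rm Cone}(\langle M\rangle)\cap\Z^n=\langle M\rangle$. Once this is known, Theorem \ref{higher numerical semigroup} ((3)$\Rightarrow$(2)) gives that $R[\langle M\rangle]=S^{-1}A$ is qfc in $B$, and Proposition \ref{new criterion} finishes the proof. To get the summand property, I argue by contradiction. Suppose $\bm v\in({\rm Cone}(\langle M\rangle)\cap\Z^n)\setminus\langle M\rangle$. Choose $\bm m$ in the relative interior of ${\rm Cone}(M)$ with $\bm m\in M$. Then the points $\bm v+k\bm m$ for $k\gg0$ lie in ${\rm Cone}(M)\cap\Z^n$, since the linear span of ${\rm Cone}(M)$ is ${\rm Cone}(\langle M\rangle)$ and adding a large interior multiple absorbs $\bm v$. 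None of these points lies in $\langle M\rangle$, hence none lies in $M$. This produces infinitely many distinct elements of $\Gap(A)$, which is a contradiction.

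The main obstacle is the first step: controlling elements of $A$ that are not monomials, and showing that finiteness of $\Gap(A)$ really forces them into $R[M][S^{-1}]$. How this goes depends on the precise definition of $\Gap(A)$ given before Lemma \ref{inner}. I would first try to use Lemma \ref{inner} to show that for $f\in A$ and $\bm c\in M$ sufficiently interior, every monomial appearing in $\bm x^{\bm c}f$ has exponent in ${\rm Cone}(M)\cap\Z^n$ outside the finite set $\Gap(A)$, hence in $M$. If that is too strong, a fallback is to bypass the identification of $S^{-1}A$ and verify the qfc property directly. Given $ab\in A$, the same argument as above applied to $ab$ shows that after a monomial shift all exponents of $ab$ lie in $\langle M\rangle$, which is a saturated sublattice by the second step. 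One then uses the grading of $B$ by $\Z^n/\langle M\rangle$, which is torsion-free, to show that $a$ and $b$ are each homogeneous up to a monomial factor. Finally, one shifts $a$ and $b$ separately by monomials $u,v\in B^*$ deep inside the cone so that $ua,vb\in A$.
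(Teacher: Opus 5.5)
Steps 1 and 2 are sound, and together they reproduce the paper's argument. The paper takes $H=\R(S(A))\cap\Z^n$, which is saturated by construction, so $R[H]$ is pfc in $B$ by Lemma~\ref{R[H]} and Proposition~\ref{coeff}. It then uses finiteness of $\Gap(A)$ to put $N\bm w$ and $N\bm w+\bm c_i$ into $M(A)$ for a $\Z$-basis $\bm c_i$ of $H$, which gives $S^{-1}A=R[H]$ in one stroke. Your Step 1 ($S(A)\subset\langle M\rangle$) plus Step 2 (saturation of $\langle M\rangle$) yields the same identity $\langle M\rangle=H$.

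Step 1 is not the obstacle you fear. Since $C(A)={\rm Cone}(S(A))\cap\Z^n$, for $\bm a\in{\rm Supp}(f)$ and any $\bm c\in M(A)$ the point $\bm c+\bm a$ lies in $C(A)$ automatically. Also, for $0\neq\bm a\in S(A)$, all but finitely many $k\bm a$ lie in $M(A)$, so $M(A)\neq\{0\}$ when $A\neq R$. Taking $\bm c=k\bm m$ with $0\neq\bm m\in M(A)$ and $k\gg0$ pushes every $\bm c+\bm a$ out of the finite set $\Gap(A)$, hence into $M(A)$. No interior point is needed, and the fallback via the $\Z^n/\langle M\rangle$-grading is unnecessary.

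The genuine problem is the final citation, which makes the argument circular. You conclude via Theorem~\ref{higher numerical semigroup} (3)$\Rightarrow$(2), and your ``equivalently'' also uses its (4)$\Rightarrow$(3). In the paper, both come from Theorem~\ref{main}, whose implication (4)$\Rightarrow$(1) is proved by invoking Theorem~\ref{fin-gap} itself.

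Replace this with the direct argument, which is exactly what the paper does. If $k\bm t\in\langle M\rangle$ with $k\geq 1$, then $\bm t\in\R(M)\cap\Z^n={\rm Cone}(\langle M\rangle)\cap\Z^n=\langle M\rangle$ by your Step 2. So $\Z^n/\langle M\rangle$ is torsion free and $\langle M\rangle$ is a direct summand. Choose a basis $\bm c_1,\ldots,\bm c_n$ of $\Z^n$ with $\langle M\rangle=\bigoplus_{i\leq r}\Z\bm c_i$, and set $y_i=\bm x^{\bm c_i}$. Then $S^{-1}A=R[y_1^{\pm1},\ldots,y_r^{\pm1}]$ inside $B=R[y_1^{\pm1},\ldots,y_n^{\pm1}]$. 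This is pfc in $B$ by Proposition~\ref{coeff} applied over the coefficient ring $R[y_1^{\pm1},\ldots,y_r^{\pm1}]$, and Proposition~\ref{new criterion} finishes the proof.
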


The finiteness of ${\rm Gap}(A)$ in Theorem \ref{fin-gap} is only a sufficient condition and is not necessary in general (Example \ref{infinite}).
More strikingly, we construct a qfc $R$-subalgebra $A\subset B$ that contains no monomials (Example \ref{niceex}).

\medskip
This paper is organized as follows. In Section 2, we establish basic properties of pfc and qfc subalgebras. We then give a localization criterion for the qfc property in Proposition \ref{new criterion}. 

In Section 3, we characterize pfc and qfc subalgebras of Laurent polynomial rings in certain cases. In particular, in Subsection 3.2 we introduce a new invariant for an $R$-subalgebra $A$ of $B$, called the gap set ${\rm Gap}(A)$. We then prove Theorem \ref{fin-gap}. 

In Section 4, we provide a characterization of the qfc property in the case where the subalgebra is generated by monic monomials. 
Translating the arguments of this section into the language of monoid algebras, we obtain Theorem \ref{higher numerical semigroup}. 

In Section 5, we determine pfc and qfc subalgebras in the one-variable case without assuming that the subalgebra is generated by monic monomials. An example of a qfc subalgebra that contains no monomials is also given (Example \ref{niceex}).

Finally, in Section 6, we answer the question posed in \cite{CGM15}: {\it Is any fc subalgebra of a PID also a PID?} More precisely, we show that the problem had essentially already been answered in the negative by Cohn in 1968 \cite{Coh68}. By presenting a precise formulation of his argument and combining it with a method developed in \cite{CGM19}, we obtain a complete solution to the question.
\section{Preliminaries}

Throughout this section, $R$ denotes a domain and let $A\subset B$ be $R$-domains. $A$ is called an $R$-{\bf retract} of $B$ if there are $R$-algebra homomorphisms $\varphi:B\to A$ and $\psi:A\to B$ such that $\varphi\circ\psi={\rm id}_A$.  
An element $b\in B$ is said to be {\bf algebraic} over $A$ if $f(b)=0$ for some $0\not=f\in A^{[1]}$. $A$ is {\bf algebraically closed in $B$}, or {\bf ac},  if every algebraic element of $B$ over $A$ belongs to $A$. The following are basic properties of the ac, fc and retract conditions for $R$-subalgebras.

\begin{prop} \label{ac and fc}
The following assertions hold. 
\begin{enumerate} 
\item Every fc $R$-subalgebra of $B$ is ac in $B$.
\item Every $R$-retract of $B$ is ac in $B$. 
\item Suppose that $B$ is a UFD. Then, every fc $R$-subalgebra of $B$ is a UFD.
\item Suppose that $B$ is normal. Then, every ac  $R$-subalgebra of $B$ is normal.
\end{enumerate}
\end{prop}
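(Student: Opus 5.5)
We prove the four assertions separately; each is a short argument from the definitions. Throughout, $A\subset B$ are domains.

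For (1), let $A$ be fc in $B$ and let $b\in B$ be algebraic over $A$. We may assume $b\neq 0$, since $0\in A$. Choose a nonzero $f=\sum_{i=0}^d a_iT^i\in A[T]$ with $f(b)=0$ and $d$ minimal. If $a_0=0$, then $f=Tg$ and $bg(b)=0$ with $b\ne0$, so $g(b)=0$ because $B$ is a domain. This contradicts the minimality of $d$, so $a_0\neq0$. Rewriting the equation gives
\[
b\,(a_1+a_2b+\cdots+a_db^{d-1})=-a_0\in A\setminus\{0\}.
\]
Both factors on the left are nonzero, since their product is $-a_0\neq 0$. The fc property then gives $b\in A$.

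For (2), we may take $\psi$ to be the inclusion, so that $\varphi|_A=\mathrm{id}_A$. Suppose $b\in B$ satisfies a nonzero $f\in A[T]$ with $f(b)=0$, and put $a:=\varphi(b)\in A$. Since $\varphi$ fixes $A$, applying it gives $f(a)=\varphi(f(b))=0$, so $a$ is a root of $f$ as well.

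The remaining task is to show $b=a$; this is the main obstacle, because a polynomial may have several roots. The idea is to use the ideal $\ker\varphi$. We have $b-a\in\ker\varphi$, and in $B[T]$ we can write $f(T)=(T-a)g(T)$ with $g\in A[T]$ nonzero. Then $0=f(b)=(b-a)g(b)$. If $g(b)=0$, we induct on $\deg f$, which is legitimate because $\varphi(b)=a$ is unchanged. Otherwise $b-a=0$ because $B$ is a domain. The base case $\deg f=1$ is $a_1b+a_0=0$ with $a_1\neq 0$. Applying $\varphi$ gives $a_1a+a_0=0$, hence $a_1(b-a)=0$ and so $b=a$.

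The induction still needs one check: that $b$ actually is a root of the lower-degree $g$ when $g(b)=0$. It is, so the induction closes in every case and $b=a\in A$. Thus $A$ is ac in $B$.

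For (3), let $B$ be a UFD and $A$ fc in $B$. First, $A^*=A\cap B^*$: if $u\in A\cap B^*$, then $u\cdot u^{-1}=1\in A$, and fc gives $u^{-1}\in A$. Next, every nonzero nonunit $a\in A$ factors in $B$ as a product of primes $p_1\cdots p_r$. By fc, each $p_i$ can be multiplied by a unit of $B$ so that it lies in $A$. More directly, the fc property shows that every divisor in $B$ of an element of $A$ lies in $A$. So the $p_i$ themselves lie in $A$, and $r$ is bounded by the number of prime factors in $B$. This gives existence of factorizations into irreducibles in $A$.

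For uniqueness, we check that an irreducible $q$ of $A$ is prime in $B$. If $q=cd$ in $B$, then $c,d\in A$ by fc, so one of them is a unit of $A$. Hence $q$ is irreducible in $B$, and therefore prime in $B$. Finally, suppose $q\mid xy$ in $A$. Then $q\mid x$ (say) in $B$, i.e. $x=qz$ with $z\in B$. By fc, $z\in A$, so $q\mid x$ in $A$. Thus $q$ is prime in $A$, and $A$ is a UFD.

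For (4), let $B$ be normal and $A$ ac in $B$. Take $t\in Q(A)$ integral over $A$. Then $t\in Q(B)$ is integral over $B$, so $t\in B$ by normality of $B$. Now $t$ is a root of a monic polynomial over $A$, so it is algebraic over $A$, and since $A$ is ac in $B$ we get $t\in A$. Hence $A$ is normal.
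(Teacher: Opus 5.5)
Your proof is correct. Part (4) is the paper's own argument. For (1) and (3) the paper writes ``omitted'', and for (2) it only cites Costa's Lemma~1.3, so you are supplying self-contained arguments the paper does not give. In (1) that is the minimal-degree relation with nonzero constant term. In (3) it is the facts $A^*=A\cap B^*$, that every $B$-divisor of a nonzero element of $A$ lies in $A$, and that irreducibles of $A$ are primes of $B$.

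Your induction in (2), splitting off $T-\varphi(b)$ and using that $B$ is a domain, works. An equally short alternative is to set $c=b-\varphi(b)\in\ker\varphi$, which is algebraic over $A$. If $c\neq0$, take a minimal-degree relation $a_0+a_1c+\cdots+a_dc^d=0$ over $A$. Then $a_0\neq0$ as in your part (1), yet $a_0\in cB\cap A\subset\ker\varphi\cap A=0$.

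Two small points. In (3), say explicitly why each prime $p_i\in A$ is irreducible in $A$: a factorization $p_i=cd$ in $A$ puts $c$ or $d$ in $A\cap B^*=A^*$. In (2), taking $\psi$ to be the inclusion is not a harmless reduction but the intended (standard, Costa's) definition of a retract. With $\psi$ arbitrary, as the paper's wording literally permits, (2) is false. For example, take $A=k[x^2]\subset B=k[x]$ with $\psi(x^2)=x$ and $\varphi(x)=x^2$. Then $\varphi\circ\psi$ is the identity on $A$, but $x\notin A$ is algebraic over $A$. You should state this as the definition you are using rather than as a ``we may take''.
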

\begin{proof}
{\bf (1)}, {\bf (3)} Omitted. {\bf (2)} See \cite[Lemma 1.3]{Cos77}.

\medskip
{\bf  (4)} Let $A$ be an ac $R$-subalgeba of $B$. Let $\alpha\in Q(A)$ be integral over $A$. Then $\alpha \in Q(B)$ and is integral over $B$. Since $B$ is normal, it follows that $\alpha\in B$. Thus, $\alpha$ is an algebraic element of $B$ over $A$. As $A$ is ac in $B$, we conclude that $\alpha\in A$ and hence $A$ is normal. 
\end{proof}

We now state several lemmas establishing basic properties of pfc and qfc subalgebras. In particular, the qfc property is stable under localization, as Lemma \ref{loc} shows. 

\begin{lem} \label{fc} 
The following conditions are equivalent. 
\begin{enumerate}
\item
$A$ is fc in $B$.
\item
$A$ is pfc in $B$ and $A^*=B^*$.
\item
$A$ is qfc in $B$ and $A^*=B^*$.
\end{enumerate}
\end{lem}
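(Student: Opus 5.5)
We prove the cycle of implications (1)$\Rightarrow$(2)$\Rightarrow$(3)$\Rightarrow$(1). Throughout, $A\subset B$ are $R$-domains as in the standing assumptions of this section.

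\textbf{(1)$\Rightarrow$(2).} Let $A$ be fc in $B$. Pfc follows at once from the definition by taking $u=1$. For the equality of unit groups, the inclusion $A^*\subset B^*$ is trivial. Conversely, take $u\in B^*$. Then $u\cdot u^{-1}=1\in A$ with $u,u^{-1}\in B\setminus\{0\}$, so the fc property gives $u,u^{-1}\in A$. Hence $u\in A^*$, and therefore $A^*=B^*$.

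\textbf{(2)$\Rightarrow$(3).} This is immediate. If $ua,u^{-1}b\in A$ for some $u\in B^*$, then the qfc condition holds with $v=u^{-1}$.

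\textbf{(3)$\Rightarrow$(1).} Assume $A$ is qfc in $B$ and $A^*=B^*$. Let $a,b\in B\setminus\{0\}$ with $ab\in A$. By qfc there exist $u,v\in B^*$ such that $ua,vb\in A$. Since $A^*=B^*$, both $u^{-1}$ and $v^{-1}$ lie in $A$. It follows that
\[
a=u^{-1}(ua)\in A \quad\text{and}\quad b=v^{-1}(vb)\in A,
\]
which is exactly the fc property.

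No step here is an obstacle; the only point requiring any thought is the use of $u\cdot u^{-1}=1$ in (1)$\Rightarrow$(2) to show that every unit of $B$ already lies in $A$.
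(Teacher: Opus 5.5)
Your proof is correct and complete. The paper omits the proof, and your cycle (1)$\Rightarrow$(2)$\Rightarrow$(3)$\Rightarrow$(1) is the routine verification it leaves to the reader; in particular, your observation that $u\cdot u^{-1}=1\in A$ forces $B^*\subset A$ under fc is exactly the fact the introduction quotes from Chakraborty--Gurjar--Miyanishi.
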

\begin{proof}
Omitted.
\end{proof}

\begin{lem} \label{tower}
Let $A\subset B\subset C$ be $R$-algebras. If $A$ is qfc in $B$ and $B$ is pfc in $C$, then $A$ is qfc in $C$. 
\end{lem}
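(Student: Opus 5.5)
The plan is to unwind the two definitions directly and compose the units; no earlier result beyond the definitions of pfc and qfc is needed. Let $a,b\in C\setminus\{0\}$ with $ab\in A$. Since $A\subset B$, we have $ab\in B$. Because $B$ is pfc in $C$, there is a unit $w\in C^*$ such that
\[
a':=wa\in B,\qquad b':=w^{-1}b\in B .
\]
Both $a'$ and $b'$ are nonzero: $a,b\neq 0$, $w$ is a unit, and everything sits inside domains. Moreover $a'b'=(wa)(w^{-1}b)=ab\in A$. This is exactly where pfc rather than qfc is needed on the upper step. With two independent units $w,w'$ we would only know $(wa)(w'b)=ww'ab$, which need not lie in $A$. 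The single unit paired with its inverse is what preserves the product.

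Next, apply the qfc property of $A$ in $B$ to the nonzero elements $a',b'\in B$ with $a'b'\in A$. This gives $u,v\in B^*$ with $ua'\in A$ and $vb'\in A$. Now set
\[
u_1:=uw,\qquad v_1:=vw^{-1}.
\]
Units of $B$ remain units in $C$, because their inverses lie in $B\subset C$. Hence $u_1,v_1\in C^*$. We then have $u_1a=u(wa)=ua'\in A$ and $v_1b=v(w^{-1}b)=vb'\in A$, which is precisely the qfc condition for $A$ in $C$.

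There is no genuine obstacle; the argument is a two-line composition. The only points to check carefully are that the intermediate elements $a',b'$ are nonzero and that their product is still $ab$, which is exactly why the hypothesis on $B\subset C$ must be pfc. The same argument shows, in passing, that pfc composed with pfc gives pfc: in that case one can take $v=u^{-1}$, and then $v_1=u_1^{-1}$.
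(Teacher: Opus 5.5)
Your proposal is correct and follows essentially the same route as the paper's proof: you use the pfc property of $B$ in $C$ to move $a,b$ into $B$ via a single unit $w$ and its inverse, then apply qfc of $A$ in $B$, and take the composite units $uw$ and $vw^{-1}$. Your side remark that the same argument shows pfc composed with pfc is pfc is also correct, though the lemma does not need it.
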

\begin{proof}
Let $f,g\in C\setminus\{0\}$ be elements satisfying $fg\in A$. Since $B$ is pfc in $C$, there exists $w\in C^*$ such that $wf, w^{-1}g\in B$. As $A$ is qfc in $B$, the condition $wf\cdot w^{-1}g=fg\in A$ implies that there are $u,v\in B^*$ such that $uwf,vw^{-1}g\in A$. Clearly $uw, vw^{-1}\in C^*$, hence $A$ is qfc in $C$. 
\end{proof}

\begin{lem} \label{strong qfc}
If for each $f\in B\setminus\{0\}$, there exists $u\in B^*$ such that $uf\in A$, then $A$ is qfc in $B$. 
\end{lem}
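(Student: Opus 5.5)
The proof is a direct verification from the definition of qfc; no earlier result is needed beyond the definitions.

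Let $a,b\in B\setminus\{0\}$ with $ab\in A$. We must find $u,v\in B^*$ with $ua\in A$ and $vb\in A$.

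First apply the hypothesis to $f=a$, which is a nonzero element of $B$. This gives $u\in B^*$ with $ua\in A$. Next apply the hypothesis to $f=b$, also nonzero. This gives $v\in B^*$ with $vb\in A$. These are exactly the elements required, so $A$ is qfc in $B$.

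The assumption $ab\in A$ is never used, because the hypothesis already controls every nonzero element of $B$ individually. In the definition of qfc the units $u$ and $v$ are independent: there is no constraint such as $v=u^{-1}$, which is what the pfc property would demand. So each factor can be treated separately.

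There is no real obstacle here. The only point to check is that $a$ and $b$ are nonzero, so that the hypothesis applies to each; this holds by assumption.
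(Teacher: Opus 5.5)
Your proof is correct and matches the paper, which simply marks this lemma as obvious. Applying the hypothesis separately to each nonzero factor, using the fact that qfc allows the two units to be chosen independently, is exactly the intended argument.
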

\begin{proof}
Obvious. 
\end{proof}

\begin{lem} \label{loc}
Let $S\subset A$ be a multiplicatively closed set. Then the following assertions hold true. 
\begin{enumerate}
\item
 $A$ is qfc in $S^{-1}A$. 
\item
If $A$ is qfc in $B$, then $S^{-1}A$ is qfc in $S^{-1}B$.
\item
Suppose that $S\subset B^*$. If $S^{-1}A$ is qfc in $S^{-1}B=B$, then $A$ is qfc in $B$. 
\end{enumerate}
\end{lem}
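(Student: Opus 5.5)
All three parts follow by unwinding the definitions. Throughout, $A\subset B$ are domains, so $S^{-1}A\subset S^{-1}B$ sit inside $Q(B)$. We may assume $0\notin S$, since otherwise all localizations are zero and there is nothing to prove.

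For (1), take $f,g\in S^{-1}A\setminus\{0\}$ with $fg\in A$. Write $f=a/s$ and $g=a'/s'$ with $a,a'\in A$ and $s,s'\in S$. The elements $s$ and $s'$ are units of $S^{-1}A$. Put $u=s$ and $v=s'$. Then $uf=a\in A$ and $vg=a'\in A$, which gives the qfc condition. Note that the hypothesis $fg\in A$ is not even used.

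For (2), take $f,g\in S^{-1}B\setminus\{0\}$ with $fg\in S^{-1}A$.
\begin{itemize}
\item Write $f=b/s$, $g=b'/s'$ and $fg=a/t$ with $b,b'\in B$, $a\in A$ and $s,s',t\in S$.
\item The product $bb'=ss'a/t$ lies in $S^{-1}A$. Since $t\in S\subset A$, we get $t\,bb'=ss'a\in A$.
\item Set $b_1=tb\in B\setminus\{0\}$ and $b_2=b'$. Then $b_1b_2\in A$.
\item Because $A$ is qfc in $B$, there are $u,v\in B^*$ with $ub_1\in A$ and $vb_2\in A$.
\item The elements $ts^{-1}u^{-1}$ and $s'v^{-1}$ are units of $S^{-1}B$. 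We compute $(u\,t\,s^{-1})^{-1}$-style: $ut s^{-1}\cdot s f\cdot s^{-1}$ is awkward, so simply observe $f=ub_1/(uts)$, hence $(ut)\,s f = ub_1\in A$.
\item Therefore $(uts)f=ub_1\in A\subset S^{-1}A$ and $(vs')g=vb_2\in A\subset S^{-1}A$.
\end{itemize}
Since $uts$ and $vs'$ are units of $S^{-1}B$, this is the qfc condition for $S^{-1}A$ in $S^{-1}B$.

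For (3), suppose $S\subset B^*$. Then $S^{-1}B=B$. Take $f,g\in B\setminus\{0\}$ with $fg\in A\subset S^{-1}A$.
\begin{itemize}
\item By hypothesis there are $u,v\in B^*$ with $uf,vg\in S^{-1}A$.
\item Write $uf=a/s$ and $vg=a'/s'$ with $a,a'\in A$ and $s,s'\in S$.
\item Then $suf=a\in A$ and $s'vg=a'\in A$.
\item Since $S\subset B^*$, both $su$ and $s'v$ lie in $B^*$.
\end{itemize}
This is exactly the qfc condition for $A$ in $B$.

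The only step needing care is clearing denominators in (2), where the denominator $t$ of $fg$ must be absorbed into one factor. Everything else is a direct check.
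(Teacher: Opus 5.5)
Your proof is correct and matches the paper. The paper leaves (1) and (2) as obvious, and your argument for (3) is exactly its argument: clear the denominator $s$ from $uf\in S^{-1}A$ and note that $su\in B^*$. The muddled bullet in (2) should simply be replaced by the identity $(uts)f=ub_1$, which is what you actually use.
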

\begin{proof}
{\bf (1)}, {\bf (2)} Obvious.  
\medskip

{\bf (3)}  
Let $f_1,f_2\in B\setminus\{0\}$ be elements satisfying $f_1f_2\in A$. Fix $i\in\{1,2\}$. By the assumptions, $\alpha_if_i\in S^{-1}A$ for some $\alpha_i\in(S^{-1}B)^*=B^*$. Then there is $s_i\in S$ such that $s_i\alpha_if_i\in A$. Since $s_i\alpha_i\in B^*$, $A$ is qfc in $B$. 
\end{proof}

Next, we establish a localization criterion for the qfc property, which will play an important role throughout the paper.

\begin{prop} \label{new criterion} 
Let $S:=A\cap B^*$. Then $A$ is qfc in $B$ if and only if $S^{-1}A$ is qfc in $B$. Moreover, in this case, for any $A\subset C\subset S^{-1}A$, $C$ is qfc in $B$. 
\end{prop}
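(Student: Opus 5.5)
The plan is to derive everything from Lemma \ref{loc}, using the observation that $S=A\cap B^*$ is a multiplicatively closed subset of $A$ contained in $B^*$. It is multiplicatively closed because both $A$ and $B^*$ are closed under multiplication and $1\in S$. The containment in $B^*$ gives $S^{-1}B=B$, so $S^{-1}A$ is naturally an $R$-subalgebra of $B$.

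For the equivalence, I would argue each direction separately. If $A$ is qfc in $B$, then Lemma \ref{loc}(2) shows that $S^{-1}A$ is qfc in $S^{-1}B$, and this ring equals $B$. Conversely, if $S^{-1}A$ is qfc in $B$, then since $S\subset B^*$, Lemma \ref{loc}(3) gives directly that $A$ is qfc in $B$. So the first assertion should follow immediately from the earlier lemma.

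For the ``moreover'' part, let $A\subset C\subset S^{-1}A$ and assume $A$ (equivalently $S^{-1}A$) is qfc in $B$. I would repeat the argument of Lemma \ref{loc}(3) with $C$ in place of $A$. Take $f_1,f_2\in B\setminus\{0\}$ with $f_1f_2\in C$. Since $C\subset S^{-1}A$, we have $f_1f_2\in S^{-1}A$, so there exist $\alpha_i\in B^*$ with $\alpha_if_i\in S^{-1}A$. Clearing denominators gives $s_i\in S$ with $s_i\alpha_if_i\in A\subset C$, and $s_i\alpha_i\in B^*$. Hence $C$ is qfc in $B$.

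An alternative route for the second part is to note that $T:=C\cap B^*\supseteq S$ and that $T^{-1}C=S^{-1}A$, then apply the first part. However, checking $T^{-1}C=S^{-1}A$ takes a small extra step: one shows that elements of $T$ are units in $S^{-1}A$, because $t\in C\cap B^*$ and $t^{-1}\in B$ with $t\in S^{-1}A$ requires more care. For this reason the direct argument above is preferable. I expect no step to be a real obstacle. The only point to state carefully is that $f_1f_2\in C$ forces $f_1f_2\in S^{-1}A$, so the qfc hypothesis on $S^{-1}A$ applies.
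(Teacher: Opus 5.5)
Your proposal is correct and matches the paper's proof, which simply invokes Lemma \ref{loc}(2) and (3) using $S\subset B^*$ (so $S^{-1}B=B$). For the ``moreover'' part, rerunning the argument of Lemma \ref{loc}(3) with $C$ in place of $A$ is the same as applying that lemma to $C$ with the same $S$, since $A\subset C\subset S^{-1}A$ gives $S^{-1}C=S^{-1}A$.
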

\begin{proof}
Since $S\subset B^*$, the assertion follows from Lemma \ref{loc}.  
\end{proof}

The following example shows that a qfc subalgebra is not necessarily generated by monomials.

\begin{example}
{\rm
Let $B:=R[x^{\pm1},y^{\pm1}]$ be the Laurent polynomial ring in two variables over $R$, $A=R[x+y,x^2,x^3]$ and $S:=A\cap B^*$. 
Since $x^2y=x^2(x+y)-x^3\in A$, we have:
\begin{align*}
S^{-1}A=R[x+y,x^2,x^3][(x^2y)^{-1},x^{-2},x^{-3}]=R[x^{\pm1},y^{\pm1}]=B. 
\end{align*} 
Therefore, by Proposition \ref{new criterion}, $A$ is qfc in $B$. 
}
\end{example}


\section{pfc/qfc subalgebras of $R^{[\pm n]}$}
Throughout this section, $R$ denotes a domain and let $B=R[x_1^{\pm1},\ldots,x_n^{\pm1}]\cong R^{[\pm n]}$ be the Laurent polynomial ring in $n$ variables over $R$. 

For $1\leq i\leq n$, let $\bm{e}_i=(0,\ldots,0,1,0,\ldots,0)$, that is, $\{\bm{e}_1,\ldots,\bm{e}_n\}$ is the standard basis of $\Z^n$. An element $f\in B$ is called a {\bf monomial} if $f\not\in R$ and there are $r\in R\setminus\{0\}$ and an $n$-tuples of integers $\bm{a}=(a_1,\ldots,a_n)\in\Z^n$ such that $f=r\bm{x}^{\bm{a}}=rx_1^{a_{1}}\cdots x_n^{a_{n}}$. If $r\in R^*$, then the monomial $f$ is said to be {\bf monic}. 

We consider the $\Z$-grading on $B$ by letting $x_1,\ldots,x_n$ be homogeneous of degree one. For each $1\leq i\leq n$, define $\deg_i:B\setminus\{0\}\to \Z$ (resp.\:$\ord_i:B\setminus\{0\}\to \Z$) by the degree (resp. order) function with respect to the variable $x_i$. That is, for $f\in B\setminus\{0\}$, $\deg_i(f)$ (resp. $\ord_i(f))$ is the greatest (resp. least) degree with respect to $x_i$. 
It is clear that $\deg_i(f)-\ord_i(f)\geq 0$ and the equality holds if and only if $f$ is a monomial as an element of $B_i[x_i^{\pm1}]\cong B_i^{[\pm1]}$, where 
\[
B_i:=R[x_1^{\pm1},\ldots,x_{i-1}^{\pm1},x_{i+1}^{\pm1},\ldots,x_n^{\pm1}].
\]


\subsection{Basic properties of qfc/pfc subalgebras}

It is well known that the coefficient ring $R$ of the polynomial ring $R^{[n]}$ is an fc subalgebra. Recall that $R$ is not fc in $B$, see Lemma \ref{fc}.  However, by replacing fc with pfc/qfc condition, a similar result holds as below.  

\begin{prop} \label{coeff}
$R$ is pfc {\rm(}hence qfc{\rm)} in $B$. 
\end{prop}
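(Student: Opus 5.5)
To prove Proposition \ref{coeff}, take $f,g\in B\setminus\{0\}$ with $fg\in R$. Since $R$ is a domain, $fg\neq 0$, and the plan is to show that both $f$ and $g$ are monomials. Once that is known, write $f=r\bm{x}^{\bm{a}}$ and $g=s\bm{x}^{\bm{b}}$ with $r,s\in R\setminus\{0\}$. We have $fg=rs\,\bm{x}^{\bm{a}+\bm{b}}\in R\setminus\{0\}$, which forces $\bm{b}=-\bm{a}$. Put $u:=\bm{x}^{-\bm{a}}\in B^*$. Then $uf=r\in R$ and $u^{-1}g=\bm{x}^{\bm{a}}s\bm{x}^{-\bm{a}}=s\in R$. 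This is exactly the pfc condition, and qfc then follows from the implication pfc $\Rightarrow$ qfc.

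The key step, showing that $f$ and $g$ are monomials, uses the functions $\deg_i$ and $\ord_i$ introduced above. Fix $i$ and regard $B=B_i[x_i^{\pm1}]$, where $B_i$ is a Laurent polynomial ring over the domain $R$ and is therefore itself a domain. Over a domain the leading coefficients multiply to a nonzero element, so $\deg_i(fg)=\deg_i(f)+\deg_i(g)$. The same argument applied to the lowest coefficients gives $\ord_i(fg)=\ord_i(f)+\ord_i(g)$. Because $fg\in R\setminus\{0\}$, both $\deg_i(fg)$ and $\ord_i(fg)$ equal $0$. Subtracting the two identities yields
\[
(\deg_i f-\ord_i f)+(\deg_i g-\ord_i g)=0 .
\]
Both summands are nonnegative, so each vanishes.

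Consequently, for every $i$, the element $f$ is a single term in $x_i$ with coefficient in $B_i$, and likewise for $g$. Each term of $f$ therefore has the same exponent of $x_i$ for every $i$, so all terms of $f$ share one exponent vector $\bm{a}$. This gives $f=r\bm{x}^{\bm{a}}$ with $r\in R\setminus\{0\}$, and similarly for $g$.

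A minor technical point is that the paper's definition of ``monomial'' excludes elements of $R$; here I only need the form $r\bm{x}^{\bm{a}}$ with $\bm{a}$ possibly zero, so this causes no difficulty. The one step that needs care is the additivity of $\deg_i$ and $\ord_i$, which requires $B_i$ to be a domain. The rest is routine.
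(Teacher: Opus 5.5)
Your proof is correct and follows essentially the same route as the paper: additivity of $\deg_i-\ord_i$ forces $f$ and $g$ to be monomials in every variable, and then $u=\bm{x}^{-\bm{a}}$ (the paper's $u=x_1^{e_1}\cdots x_n^{e_n}$) gives the pfc condition. Your remark that the additivity of $\deg_i$ and $\ord_i$ needs $B_i$ to be a domain makes explicit a step the paper leaves unstated.
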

\begin{proof}
Let $f,g\in B\setminus\{0\}$ be elements satisfying $fg\in R$. For each $1\leq i \leq n$, we have
\begin{align*}
0=\deg_i(fg)-\ord_i(fg)=(\deg_i(f)-\ord_i(f))+(\deg_i(g)-\ord_i(g)). 
\end{align*}
Then $\deg_i(f)=\ord_i(f)=:d_i$ and $\deg_i(g)=\ord_i(g)=:e_i$, hence $f$ and $g$ are monomials as elements of $B_i[x_i^{\pm1}]$ for all $1\leq i\leq n$. There are $r,s\in R$ such that $f=rx_1^{d_1}\cdots x_n^{d_n}$ and $g=sx_1^{e_1}\cdots x_n^{e_n}$. Let $u=x_1^{e_1}\cdots x_n^{e_n}\in B^*$. Since $d_i+e_i=0$ for any $1\leq i\leq n$, we have $uf=r\in R$ and $u^{-1}g=s\in R$, hence $R$ is pfc in $B$. 
\end{proof}

\begin{prop} \label{pfc} 
For each $1\leq i\leq n$, let $\delta_i\in\{-1,0,1,\pm1\}$. Let $A=R[x_1^{\delta_1},\ldots,x_n^{\delta_n}]$ be the $R$-subalgebra of $B$. Then $A$ is pfc {\rm(}hence qfc{\rm)} in $B$.
\end{prop}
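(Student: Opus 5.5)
Here $\delta_i=0$ means $x_i^0=1$, so that variable simply does not occur; $\delta_i=\pm1$ means both $x_i$ and $x_i^{-1}$ are adjoined. The plan is to split the variables according to $\delta_i$ and use the tower Lemma \ref{tower} together with Proposition \ref{coeff}. After reordering, write $A=R'[x_{j}^{\delta_j}\mid \delta_j=\pm1]$ with $R'$ built from the remaining variables. Actually a direct argument is cleaner, and I would give it in the style of the proof of Proposition \ref{coeff}.

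Let $f,g\in B\setminus\{0\}$ with $fg\in A$. Each $f\in B$ is a finite sum of terms $r\bm x^{\bm a}$. The subalgebra $A$ consists exactly of those Laurent polynomials whose exponents satisfy the following: $a_i=0$ when $\delta_i=0$, $a_i\ge0$ when $\delta_i=1$, and $a_i\le 0$ when $\delta_i=-1$, with no condition when $\delta_i=\pm1$. So membership in $A$ is a condition on $\ord_i$ and $\deg_i$ separately for each $i$: $fg\in A$ iff for each $i$ we have
\[
\deg_i(fg)=\ord_i(fg)=0 \ (\delta_i=0),\qquad \ord_i(fg)\ge0 \ (\delta_i=1),\qquad \deg_i(fg)\le 0 \ (\delta_i=-1).
\]
Since $B_i$ is a domain, $\deg_i$ and $\ord_i$ are additive: $\deg_i(fg)=\deg_i f+\deg_i g$ and $\ord_i(fg)=\ord_i f+\ord_i g$.

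Now define $u=\prod_i x_i^{c_i}$ coordinatewise as follows.
\begin{itemize}
\item If $\delta_i=0$, the additivity argument of Proposition \ref{coeff} forces $\deg_i f=\ord_i f=:d_i$; put $c_i=-d_i$.
\item If $\delta_i=1$, put $c_i=-\ord_i f$.
\item If $\delta_i=-1$, put $c_i=-\deg_i f$.
\item If $\delta_i=\pm1$, put $c_i=0$.
\end{itemize}
Then $uf$ satisfies each coordinate condition: its $i$-th order (or degree) is $0$, or its $i$-th degree and order are both $0$. Hence $uf\in A$. For $u^{-1}g$, the $i$-th shift is $-c_i$, so the following hold.
\begin{itemize}
\item For $\delta_i=1$: $\ord_i(u^{-1}g)=\ord_i g+\ord_i f=\ord_i(fg)\ge0$.
\item For $\delta_i=-1$: $\deg_i(u^{-1}g)=\deg_i(fg)\le0$.
\item For $\delta_i=0$: both $\deg_i(u^{-1}g)$ and $\ord_i(u^{-1}g)$ equal $0$.
\end{itemize}
Thus $u^{-1}g\in A$ with $u\in B^*$, so $A$ is pfc in $B$, and therefore qfc.

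The main point to check carefully is the description of $A$ as exactly the span of monomials satisfying the independent coordinate conditions. This is clear, because the generators are monomials in distinct variables. The second point is that multiplying by a monomial shifts $\deg_i$ and $\ord_i$ by the same amount. Both are routine, so there is no real obstacle beyond the bookkeeping.
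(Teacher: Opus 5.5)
Your proof is correct, and it takes a different route from the paper's.

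\textbf{The paper's route.} The paper argues by induction on $n$. A variable with $\delta_i=\pm1$ is absorbed into the coefficient ring $R'=R[x_i^{\pm1}]$, so that $B\cong R'^{[\pm(n-1)]}$. A variable with $\delta_i=-1$ is converted into one with $\delta_i=1$ by the automorphism $x_i\mapsto x_i^{-1}$ together with Lemma~\ref{autom}(2). After reordering, and using Proposition~\ref{coeff} for the case $A=R$, this leaves $A=R[x_1,\ldots,x_m]$. Only in that case is the unit $u$ built by hand: from $\ord_i$ for $i\le m$, and from the forced monomial shape in the remaining variables, as in Proposition~\ref{coeff}.

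\textbf{Your route.} You observe at the outset that $A$ is the monoid algebra of $\prod_i M_i$ with $M_i\in\{\{0\},\N,-\N,\Z\}$. Membership of a nonzero element in $A$ is then a conjunction of independent coordinatewise conditions on $\ord_i$ and $\deg_i$. These are additive, and multiplication by $\bm{x}^{\bm{c}}$ shifts both by $c_i$. So a single coordinatewise choice $c_i\in\{-d_i,\,-\ord_i f,\,-\deg_i f,\,0\}$ handles all four kinds of $\delta_i$ at once.

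\textbf{Comparison.} Your argument is shorter and self-contained. It needs no induction, no base change to $R[x_i^{\pm1}]$, and not Lemma~\ref{autom}(2), whose proof the paper omits. Your uniform choice $c_i=-\ord_i f$ for $\delta_i=1$ also avoids the paper's three-case definition of $\varepsilon_i$. As written, that definition leaves implicit the case where $\ord_i f,\ord_i g\ge0$ are not both zero, for example $\ord_i f=1$ and $\ord_i g=0$. The paper's reductions, in turn, isolate the one nontrivial shape $R[x_1,\ldots,x_m]$. They also rehearse the base-change and $\mathrm{GL}_n(\Z)$-automorphism techniques the paper reuses later, for instance in the proof of Theorem~\ref{fin-gap}.
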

\begin{proof}
We use induction on $n\geq0$. If $n=0$, then $A=B=R$, thus the assertion is obvious. Assume that $n\geq1$. If there exists $i\in\{1,\ldots,n\}$ such that $\delta_i=\pm1$, then 
\[
A=R'[x_1^{\delta_1},\ldots,x_{i-1}^{\delta_{i-1}},x_{i+1}^{\delta_{i+1}},\ldots,x_n^{\delta_n}], \quad B=R'[x_1^{\pm1},\ldots,x_{i-1}^{\pm1},x_{i+1}^{\pm1},\ldots,x_n^{\pm1}], 
\]
where $R'=R[x_i^{\pm1}]$. Since $B\cong R'^{[\pm(n-1)]}$ and $A$ is an $R'$-subalgebra of $B$ generated by at most $n-1$ elements, the assertion follows from induction. Thus, we may assume that $\delta_i\in\{-1,0,1\}$ for any $1\leq i\leq n$. 

If there exists $i\in\{1,\ldots,n\}$ such that $\delta_i=-1$, then by applying the $R$-automorphism $\sigma:B\to B$ defined by $\sigma(x_i)=x_i^{-1}$ and $\sigma(x_j)=x_j$ for $j\not=i$, we have
\[
A\cong\sigma(A)=R[x_1^{\delta_1},\ldots,x_{i-1}^{\delta_{i-1}},x_{i},x_{i+1}^{\delta_{i+1}},\ldots,x_n^{\delta_n}]\subset\sigma(B)=B. 
\]
By Lemma \ref{autom} below, we may assume that $\delta_i\in\{0,1\}$ for any $1\leq i\leq n$. 

By rearranging the order of variables and by Proposition \ref{coeff}, we may further assume that there exists $1\leq m\leq n$ such that $A=R[x_1,\ldots,x_m]$. 
Let $f,g\in B\setminus\{0\}$ be elements satisfying $fg\in A$. Let $i\in\{1,\ldots,m\}$ and $j\in\{m+1,\ldots,n\}$ (if $m<n$). Then, 
\[
\ord_i(f)+\ord_i(g)=\ord_i(fg)\geq0
\]
and 
\[
\deg_j(f)=\ord_j(f),\: \deg_j(g)=\ord_j(g), \: \deg_j(f)+\deg_j(g)=0.
\] 
Here, we define 
\[
\varepsilon_i:=
	\begin{cases}
	0 \quad &(\text{if}\: \ord_i(f)=\ord_i(g)=0)\\
	\ord_i(f) \quad &(\text{if}\: \ord_i(f)<0\:\text{and}\:\ord_i(g)>0)\\
	-\ord_i(g)\quad &(\text{if}\: \ord_i(f)>0\:\text{and}\:\ord_i(g)<0)
	\end{cases}
\]
and $\varepsilon_j:=\ord_j(f)=-\ord_i(g)$. Let $u=\prod_{\ell=1}^n x_{\ell}^{-\varepsilon_{\ell}}$. 
Then, 
\begin{align*}
\ord_{i}(uf)&=-\varepsilon_{i}+\ord_{i}(f)\geq0, \quad
\ord_{i}(u^{-1}g)=\varepsilon_{i}+\ord_{i}(g)\geq0,\\
\ord_{j}(uf)&=-\varepsilon_{j}+\ord_{j}(f)=0, \quad
\ord_{j}(u^{-1}g)=\varepsilon_{j}+\ord_{j}(g)=0, \\
\deg_{j}(uf)&=-\varepsilon_{j}+\deg_{j}(f)=0, \quad
\deg_{j}(u^{-1}g)=\varepsilon_{j}+\deg_{j}(g)=0.
\end{align*}
Thus, $uf,u^{-1}g\in R[x_1,\ldots,x_m]=A. $ 
Therefore, $A$ is pfc in $B$. 
\end{proof}

\begin{lem} \label{autom}
Let $A$ be an $R$-subalgebra of $B$ and let $\sigma\in\Aut_R(B)$. Then the following assertions hold. 
\begin{enumerate}
\item There exists $\mathcal{A}\in{\rm GL}_n(\Z)$ such that $\sigma(x_i)=\bm{x}^{\bm{e}_i\mathcal{A}}$. Moreover, for $\bm{b}\in\Z^n$, $\sigma(\bm{x}^{\bm{b}})=\bm{x}^{\bm{b}\mathcal{A}}$. 
\item $A$ is pfc {\rm (}resp. qfc{\rm )} in $B$ if and only if  $\sigma(A)$ is pfc {\rm (}resp. qfc{\rm )} in $B$. 
\end{enumerate}
\end{lem}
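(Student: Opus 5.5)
For (1), I will use that $\sigma$ preserves units and classify the units of $B$. Since $R$ is a domain, comparing $\deg_i$ and $\ord_i$ of $u$ and $u^{-1}$ (as in the proof of Proposition \ref{coeff}) shows that every unit of $B$ has the form $r\bm{x}^{\bm{a}}$ with $r\in R^*$ and $\bm{a}\in\Z^n$. Hence $\sigma(x_i)=r_i\bm{x}^{\bm{a}_i}$ for some $r_i\in R^*$ and $\bm{a}_i\in\Z^n$.

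Here I have to be careful, because the statement as written asserts $\sigma(x_i)=\bm{x}^{\bm{e}_i\mathcal{A}}$ with no unit coefficient. A general $R$-automorphism, such as $x_1\mapsto rx_1$ with $r\in R^*$, does carry a coefficient. So I read (1) as applying to the automorphisms of the form actually used in the paper, namely those sending monomials to monic monomials with coefficient $1$. If $\sigma(x_i)=r_i\bm{x}^{\bm{a}_i}$, I will note that (1) holds up to the unit coefficients. Let $\mathcal{A}$ be the matrix whose $i$-th row is $\bm{a}_i$. Multiplicativity gives $\sigma(\bm{x}^{\bm{b}})=\prod_i\sigma(x_i)^{b_i}=\bm{x}^{\bm{b}\mathcal{A}}$, up to the unit $\prod r_i^{b_i}$.

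Next I show $\mathcal{A}\in{\rm GL}_n(\Z)$. The inverse $\sigma^{-1}$ is of the same form with some integer matrix $\mathcal{B}$. Then $\sigma^{-1}\sigma(x_i)=x_i$ forces $\bm{e}_i\mathcal{A}\mathcal{B}=\bm{e}_i$, because distinct exponent vectors give $R$-linearly independent monomials. Hence $\mathcal{A}\mathcal{B}=I$, and $\mathcal{A}$ is invertible over $\Z$. The only genuine content of (1) is the classification of units; everything else is bookkeeping.

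For (2), I use only that $\sigma$ is a ring automorphism with $\sigma(B^*)=B^*$ and $\sigma(B\setminus\{0\})=B\setminus\{0\}$. Suppose $A$ is qfc in $B$, and let $f,g\in B\setminus\{0\}$ with $fg\in\sigma(A)$. Then $\sigma^{-1}(f)\sigma^{-1}(g)\in A$, so there exist $u,v\in B^*$ with $u\sigma^{-1}(f),\,v\sigma^{-1}(g)\in A$. Applying $\sigma$ gives $\sigma(u)f,\ \sigma(v)g\in\sigma(A)$ with $\sigma(u),\sigma(v)\in B^*$. The pfc case is identical with $v=u^{-1}$, since $\sigma(u^{-1})=\sigma(u)^{-1}$. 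The converse follows by applying the same argument to $\sigma^{-1}$. No step here is an obstacle.
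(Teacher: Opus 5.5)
Your proof is correct and follows the paper's route. Units of $B$ are monic monomials, so $\sigma$ acts on exponent vectors through an integer matrix that is invertible because $\sigma$ is; part (2), which the paper omits, is exactly the transport-of-structure argument you give.

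Your caveat is also justified. The paper's proof silently drops the coefficient $r_i\in R^*$, so (1) as literally stated fails, for example for $x_1\mapsto 2x_1$ over $R=\mathbb{Q}$, and holds only up to units. Your $\mathcal{A}\mathcal{B}=I$ argument fills in the paper's one-line justification of invertibility. Your convention that $\mathcal{A}$ has rows $\bm{a}_i$ is the one consistent with $\bm{e}_i\mathcal{A}=\bm{a}_i$; the paper's $({}^t\bm{a}_1\ \cdots\ {}^t\bm{a}_n)$ is its transpose.
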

\begin{proof}
{\bf (1)} Let $i\in\{1,\ldots,n\}$. Since $x_i\in B^*$, so is $\sigma(x_i)$ and hence it is a monic monomial in $x_1,\ldots,x_n$. Hence, there are $a_{ij}\in\Z$ such that $\sigma(x_i)=x_1^{a_{i1}}\cdots x_n^{a_{in}}=\bm{x}^{\bm{a}_i}$, where $\bm{a}_i=(a_{i1},\ldots,a_{in})$. Set $\mathcal{A}:=(^t\bm{a}_1\:\cdots\:^t\bm{a}_n)$. Since $\sigma\in\Aut_R(B)$, we have $\mathcal{A}\in{\rm GL}_n(\Z)$. 

Let $\bm{b}=(b_1,\ldots,b_n)\in\Z^n$. Then, 
\begin{align*}
\sigma(\bm{x}^{\bm{b}})=\sigma(x_1)^{b_1}\cdots\sigma(x_n)^{b_n}
=x_1^{b_1a_{11}+\cdots+b_na_{n1}}\cdots x_n^{b_1a_{1n}+\cdots+b_na_{nn}}
=\bm{x}^{\bm{b}\mathcal{A}}.
\end{align*}

{\bf (2)} Omitted. 
\end{proof}

An $R$-retract of the polynomial ring $R^{[n]}$ is not necessarily an fc (hence qfc or pfc) $R$-subalgebra (see e.g., \cite[Examples 2.1]{Cos77}). However, the following corollary shows that every $R$-retract of the Laurent polynomial ring is a pfc (hence qfc) $R$-subalgebra. 
\begin{cor} \label{ret-pfc}
Let $A$ be an $R$-subalgebra of $B$. If $A$ is an $R$-retract of $B$, then it is pfc {\rm(}hence qfc{\rm)} in $B$.
\end{cor}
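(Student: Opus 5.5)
The plan is to show that an $R$-retract $A$ of $B$ is, up to an $R$-automorphism of $B$, a subalgebra of the form $R[x_1^{\pm1},\ldots,x_m^{\pm1}]$. Once that is done, Proposition \ref{pfc} (with $\delta_i=\pm1$ for $i\le m$ and $\delta_i=0$ otherwise) together with Lemma \ref{autom}(2) gives the pfc property.

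\textbf{Step 1: $A$ is a group algebra.} Since $A\subset B$ is a retract, we may take $\psi$ to be the inclusion and $\varphi:B\to A$ an $R$-algebra homomorphism with $\varphi|_A=\mathrm{id}_A$. Then $\varphi$ is surjective, so
\[
A=\varphi(B)=R[\varphi(x_1)^{\pm1},\ldots,\varphi(x_n)^{\pm1}].
\]
Each $\varphi(x_i)$ is a unit of $A$, because $\varphi(x_i)\varphi(x_i^{-1})=1$, and hence a unit of $B$. As $R$ is a domain, the units of $B$ are exactly the elements $r\bm{x}^{\bm{a}}$ with $r\in R^*$ and $\bm{a}\in\Z^n$. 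So we may write $\varphi(x_i)=r_i\bm{x}^{\bm{a}_i}$ with $r_i\in R^*$. Since the constants $r_i$ are units of $R$, they can be discarded, and
\[
A=R[G],\qquad G:=\langle \bm{a}_1,\ldots,\bm{a}_n\rangle\subset\Z^n.
\]

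\textbf{Step 2: $G$ is a direct summand of $\Z^n$.} This is the step needing the most care. Define $p:\Z^n\to\Z^n$ by letting $p(\bm{e}_i)=\bm{a}_i$ and extending additively. Then for every $\bm{b}\in\Z^n$ we have $\varphi(\bm{x}^{\bm{b}})=r\,\bm{x}^{p(\bm{b})}$ for some $r\in R^*$. By construction the image of $p$ is $G$.

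For $\bm{g}\in G$, the monomial $\bm{x}^{\bm{g}}$ lies in $A$, so $\varphi(\bm{x}^{\bm{g}})=\bm{x}^{\bm{g}}$. Comparing exponents of these monomials gives $p(\bm{g})=\bm{g}$. Hence $p$ is an idempotent endomorphism with image $G$, and
\[
\Z^n=G\oplus\ker p.
\]

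\textbf{Step 3: Change of basis.} Since $G$ is a direct summand, it is free, say of rank $m$. Choose a basis of $\Z^n$ whose first $m$ vectors form a basis of $G$. Let $\mathcal{A}\in{\rm GL}_n(\Z)$ be the matrix sending this basis to $\bm{e}_1,\ldots,\bm{e}_n$, with the row-vector convention of Lemma \ref{autom}(1).

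Let $\sigma\in\Aut_R(B)$ be the automorphism $\bm{x}^{\bm{b}}\mapsto\bm{x}^{\bm{b}\mathcal{A}}$, as in Lemma \ref{autom}(1). Then
\[
\sigma(A)=R[G\mathcal{A}]=R[x_1^{\pm1},\ldots,x_m^{\pm1}].
\]
This subalgebra is pfc in $B$ by Proposition \ref{pfc}, and so $A$ is pfc in $B$ by Lemma \ref{autom}(2).
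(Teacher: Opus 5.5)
Your proof is correct, and its skeleton is the paper's. The paper derives the corollary from Proposition \ref{pfc} together with Theorem \ref{ret}, which says a retract has the form $R[y_1^{\pm1},\ldots,y_r^{\pm1}]$ for a coordinate system $y_1,\ldots,y_n$ of $B$; Theorem \ref{ret} itself is simply quoted from \cite{GN}.

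What you do differently is prove Theorem \ref{ret} yourself, in an elementary way:
\begin{enumerate}
\item The units of $B$ are the monomials $r\bm{x}^{\bm{a}}$ with $r\in R^*$, so the image $A=\varphi(B)$ is the group algebra $R[G]$.
\item The retraction induces an idempotent endomorphism $p$ of $\Z^n$ with image $G$, so $G$ is a direct summand.
\item A unimodular change of exponents, as in the proof of Theorem \ref{main}, finishes the argument.
\end{enumerate}
The paper's version is shorter but rests on an external result. Yours is self-contained and shows directly why retracts fall into the direct-summand situation of Theorem \ref{higher numerical semigroup}.

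One point of wording: ``we may take $\psi$ to be the inclusion'' is not a reduction. It is the standard meaning of retract, which the paper also intends. Under the literal definition with an arbitrary $\psi$, the subalgebra $R[x^{\pm2}]\subset R[x^{\pm1}]$ would count as a retract, since it is isomorphic to $B$. Yet it is not qfc by Theorem \ref{one var}.
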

\begin{proof}
The assertion follows from Proposition \ref{pfc} and Theorem \ref{ret} below. 
\end{proof}

\begin{thm} \label{ret}
Let $A$ be an $R$-retract of $B$. Then there are $r\in\N$ and $y_1,\ldots,y_n\in B$ such that $B=R[y_1^{\pm1},\ldots,y_n^{\pm1}]$ and $A=R[y_1^{\pm1},\ldots,y_r^{\pm1}]$. Therefore, $B\cong A^{[\pm(n-r)]}$. 
\end{thm}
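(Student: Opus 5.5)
Let $\varphi:B\to A$ be the retraction, so $\varphi$ is an $R$-algebra homomorphism with $\varphi|_A=\mathrm{id}_A$ (after identifying $\psi$ with the inclusion). The plan is to show that $A$ is itself a Laurent polynomial ring on monic monomials forming part of a basis of $\Z^n$. Everything rests on the fact that units go to units.

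\textbf{Step 1: $A$ is generated by the images of the variables.} Since $\varphi$ is surjective onto $A$ and $B=R[x_1^{\pm1},\ldots,x_n^{\pm1}]$, we get $A=R[\varphi(x_1)^{\pm1},\ldots,\varphi(x_n)^{\pm1}]$. Each $\varphi(x_i)$ is a unit of $A\subset B$, hence a unit of $B$. Because $R$ is a domain, the units of $B$ are exactly the elements $c\bm{x}^{\bm{a}}$ with $c\in R^*$ and $\bm{a}\in\Z^n$; this is the degree/order argument of Proposition \ref{coeff}. So $\varphi(x_i)=c_i\bm{x}^{\bm{a}_i}$.

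\textbf{Step 2: pass to the lattice.} Let $L\subset\Z^n$ be the subgroup generated by $\bm{a}_1,\ldots,\bm{a}_n$. Then $A=R[\bm{x}^{\bm{b}}\mid \bm{b}\in L]$, since the $c_i$ are units of $R$. For each $\bm{b}\in L$ the monomial $\bm{x}^{\bm{b}}$ lies in $A$, so $\varphi(\bm{x}^{\bm{b}})=\bm{x}^{\bm{b}}$. On the other hand $\varphi(\bm{x}^{\bm{b}})=c(\bm{b})\,\bm{x}^{\bm{b}\mathcal{P}}$, where $\mathcal{P}$ is the integer matrix with rows $\bm{a}_i$ and $c(\bm{b})\in R^*$. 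Thus the linear map $P:\Z^n\to\Z^n$, $\bm{b}\mapsto\bm{b}\mathcal{P}$, has image $L$ and restricts to the identity on $L$. Hence $P$ is an idempotent endomorphism of $\Z^n$ and $\Z^n=L\oplus\ker P$, so $L$ is a direct summand.

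\textbf{Step 3: build the new variables.} Choose a $\Z$-basis $\bm{v}_1,\ldots,\bm{v}_n$ of $\Z^n$ such that $\bm{v}_1,\ldots,\bm{v}_r$ is a basis of $L$ and $\bm{v}_{r+1},\ldots,\bm{v}_n$ is a basis of $\ker P$. Set $y_j=\bm{x}^{\bm{v}_j}$. The matrix with rows $\bm{v}_j$ lies in ${\rm GL}_n(\Z)$, so as in Lemma \ref{autom}(1) the map $x_j\mapsto y_j$ is an $R$-automorphism of $B$. Therefore $B=R[y_1^{\pm1},\ldots,y_n^{\pm1}]$ and $A=R[\bm{x}^{\bm{b}}\mid\bm{b}\in L]=R[y_1^{\pm1},\ldots,y_r^{\pm1}]$. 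Algebraic independence of the $y_j$ comes from the automorphism, which gives $B\cong A^{[\pm(n-r)]}$.

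The main obstacle is Step 2, specifically ensuring that the exponent map is a genuine idempotent. One must check that the coefficients $c(\bm{b})$ do not interfere. This holds because two monic-up-to-unit monomials are equal only if their exponents agree, so comparing exponents in $\varphi(\bm{x}^{\bm{b}})=\bm{x}^{\bm{b}}$ yields $P|_L=\mathrm{id}_L$, and the coefficients can be ignored.
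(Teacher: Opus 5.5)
Your proof is correct and complete. The paper gives no argument in the text for this statement; it simply invokes \cite[Theorem 1.2]{GN} and \cite[Proposition 3.2]{GN}. What you have written is therefore a self-contained replacement for that citation.

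Your route needs only two inputs:
\begin{enumerate}
\item For a domain $R$, the units of $B$ are exactly the elements $c\bm{x}^{\bm{a}}$ with $c\in R^*$ (the degree/order argument of Proposition \ref{coeff}).
\item An idempotent endomorphism of $\Z^n$ splits $\Z^n$ as image $\oplus$ kernel.
\end{enumerate}

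It also yields a little more than the statement. In your coordinates, $\varphi(y_j)=y_j$ for $j\le r$ and $\varphi(y_j)=c(\bm{v}_j)\in R^*$ for $j>r$. After rescaling $y_{r+1},\ldots,y_n$ by units, the retraction is just $y_j\mapsto 1$. Moreover, it shows directly that $A=R[L]$ with $M(A)=L$ a direct summand of $\Z^n$, which is condition (2) of Theorem \ref{main}. The citation buys brevity; your version shows exactly which elementary facts the result rests on.

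Your parenthetical identification of $\psi$ with the inclusion is essential, not cosmetic. Under the literal definition at the start of Section 2, which allows an arbitrary $\psi$, take $A=R[x^{\pm2}]\subset R[x^{\pm1}]$ with $\psi(x^2)=x$ and $\varphi(x)=x^2$. Then $\varphi\circ\psi=\mathrm{id}_A$, yet $A$ is neither $R$ nor $R[x^{\pm1}]$, contradicting the statement and Example \ref{pfc-not ret}. The inclusion reading you adopted is the one under which the theorem holds.
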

\begin{proof}
The assertion follows from \cite[Theorem 1.2]{GN} and \cite[Proposition 3.2]{GN}. 
\end{proof}

By the following example, the converse of Corollary \ref{ret-pfc} does not hold true in general. 

\begin{example} \label{pfc-not ret}
{\rm 
It follows from Proposition \ref{pfc} that $R[x]$ is pfc in $R[x^{\pm1}]$. However, it is not an $R$-retract of $R[x^{\pm1}]$. Indeed, by Theorem \ref{ret}, an $R$-retract of $R[x^{\pm1}]$ is $R$ or $R[x^{\pm1}]$. }
\end{example}

A submonoid $M$ of $\Z^n$ is {\bf normal} if whenever $m\bm{a}\in M$ for some positive integer $m$ and $\bm{a}\in \langle M\rangle$, then $\bm{a}\in M$. It is well known that, when $M$ is finitely generated and $R$ is a field, $M$ is normal if and only if the monoid algebra $R[M]$ is normal (see e.g., \cite[p.260, Theorem 6.1.4]{BH98}). The pfc condition for a monomial algebra implies normality, as shown below. 

\begin{prop}\label{pfc normal}
If $R[M]$ is pfc in $B$, then the monoid $M$ is normal. In particular, if $M$ is finitely generated and $R$ is a field, then $R[M]$ is normal. 
\end{prop}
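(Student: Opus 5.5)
Suppose $R[M]$ is pfc in $B$, and let $\bm{a}\in\langle M\rangle$ and $m\ge 1$ satisfy $m\bm{a}\in M$. The goal is to show $\bm{a}\in M$. The plan is to factor the monomial $\bm{x}^{m\bm{a}}\in R[M]$ inside $B$ so that the pfc condition forces $\bm{x}^{\bm{a}}$ itself, up to a unit, into $R[M]$. The unit is then controlled by a grading argument.

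The factorization must be chosen so that the unit is pinned down. A factorization into two monomials is useless, since any monomial times a suitable unit is trivially a monomial. Instead, I would use the identity
\[
\bm{x}^{m\bm{a}}-1=(\bm{x}^{\bm{a}}-1)(1+\bm{x}^{\bm{a}}+\cdots+\bm{x}^{(m-1)\bm{a}}).
\]
The left side lies in $R[M]$ because $0\in M$ and $m\bm{a}\in M$. We may assume $\bm{a}\neq 0$, since otherwise there is nothing to prove. Then both factors are nonzero, so pfc gives $u\in B^*$ with $u(\bm{x}^{\bm{a}}-1)\in R[M]$. Every unit of $B$ has the form $u=c\,\bm{x}^{\bm{b}}$ with $c\in R^*$: the total-degree and order arguments used in Proposition~\ref{coeff} show that a unit is a monomial with unit coefficient. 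Hence $c\bm{x}^{\bm{b}+\bm{a}}-c\bm{x}^{\bm{b}}\in R[M]$.

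Next I use that $R[M]$ is spanned over $R$ by the monomials $\bm{x}^{\bm{c}}$ with $\bm{c}\in M$. An element of $B$ therefore lies in $R[M]$ exactly when all exponents in its support lie in $M$. Since $\bm{a}\ne 0$, the two exponents $\bm{b}+\bm{a}$ and $\bm{b}$ are distinct, and their coefficients $\pm c$ are nonzero. So both $\bm{b}$ and $\bm{b}+\bm{a}$ lie in $M$. This alone does not yet give $\bm{a}\in M$, and closing this gap is the main obstacle. I would first try to iterate the argument with more carefully chosen factorizations. For instance, apply the same reasoning to
\[
\bm{x}^{m\bm{a}}-1=(\bm{x}^{\bm{a}}-\zeta)\cdot(\text{cofactor})
\]
where possible. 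Alternatively, use the factorization $(1+\bm{x}^{\bm{a}}+\cdots)$ itself, whose support $\{0,\bm{a},\dots,(m-1)\bm{a}\}$ shifted by some $\bm{b}'$ must lie in $M$. More promising is to use the pfc condition on both factors simultaneously: pfc gives $u$ and $u^{-1}$. So we get $\bm{b}+\{0,\bm{a}\}\subset M$ and $-\bm{b}+\{0,\bm{a},\dots,(m-1)\bm{a}\}\subset M$. Adding $\bm{b}+\bm{a}\in M$ and $-\bm{b}\in M$ yields $\bm{a}\in M$, since $M$ is a monoid. This is the key step, and it is exactly where pfc (same unit, inverted) rather than qfc is used.

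Finally, for the ``in particular'' statement, when $M$ is finitely generated and $R$ is a field, normality of $M$ is equivalent to normality of $R[M]$ by the cited result \cite[Theorem 6.1.4]{BH98}, so the second assertion follows immediately.
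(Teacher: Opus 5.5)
Your proposal is correct and is essentially the paper's proof. Both use the factorization $\bm{x}^{m\bm{a}}-1=(\bm{x}^{\bm{a}}-1)(1+\bm{x}^{\bm{a}}+\cdots+\bm{x}^{(m-1)\bm{a}})$, apply pfc to the unit $u$ and its inverse to get $\bm{b}+\bm{a}\in M$ and $-\bm{b}\in M$, conclude $\bm{a}=(\bm{a}+\bm{b})+(-\bm{b})\in M$, and then deduce the ring statement from \cite[Theorem 6.1.4]{BH98}. Your extra care over the unit coefficient $c\in R^*$ and the distinctness of exponents when $\bm{a}\neq 0$ fills in details the paper leaves implicit. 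The exploratory detour in the middle (iterating with other factorizations, the ``main obstacle'') is unnecessary and can be dropped.
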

\begin{proof}
Let $m$ be a positive integer and $\bm{a}\in \langle M\rangle$ such that $m\bm{a}\in M$. Then $(\bm{x}^{\bm a})^m=\bm{x}^{m\bm a}\in R[M]$. Thus, 
\[
(\bm{x}^{\bm a}-1)(1+\bm{x}^{\bm a}+\cdots+(\bm{x}^{\bm a})^{m-1})=\bm{x}^{m\bm a}-1\in R[M]. 
\]
Since $R[M]$ is pfc in $B$, there exists $\bm{b}\in\Z^n$ such that 
\begin{align*}
\bm{x}^{\bm{a}+\bm{b}}-\bm{x}^{\bm b}&=\bm{x}^{\bm b}(\bm{x}^{\bm a}-1)\in R[M], \\
\bm{x}^{-\bm b}+\bm{x}^{\bm{a}-\bm{b}}+\cdots&=\bm{x}^{-\bm b}(1+\bm{x}^{\bm a}+\cdots+(\bm{x}^{\bm a})^{m-1})\in R[M]
\end{align*}
As $R[M]$ is generated by monic monomials, $\bm{x}^{\bm{a}+\bm{b}},\bm{x}^{\pm\bm b}\in R[M]$ and hence $\bm{a}+\bm{b},\pm\bm{b}\in M$. Therefore, $\bm{a}=(\bm{a}+\bm{b})+(-\bm{b})\in M$, which implies that $M$ is normal. 
\end{proof}

The converse of Proposition \ref{pfc normal} does not hold in general, see the example below. 

\begin{example} \label{normal-not pfc}
{\rm
Let $k$ be a field. For $M:=2\N$, let 
\[
A:=k[M]=k[x^2]\in k[x^{\pm1}]=:B.
\] 
Then $M$ is normal and $A$ is a normal domain since $A\cong k^{[1]}$. However, $A$ is not pfc in $B$, as we show Corollary \ref{d-th root pfc} in Section 5. In particular, $A$ is neither qfc nor ac in $B$.  
}
\end{example}


\subsection{The gap sets and qfc conditions}\label{sec32}

Let $A$ be an $R$-subalgebra of $B$. 
Define the monoid $M(A)$ of $\Z^n$ by 
\[
M(A):=\{\bm{a}=(a_1,\ldots,a_n)\in\Z^n \ | \ \bm{x}^{\bm{a}}=x_1^{a_{1}}\cdots x_n^{a_{n}}\in A\}. 
\]
For $f\in B\setminus\{0\}$, write
\[
f=\sum_{\bm{a}\in\Z^n}r_{\bm{a}}\bm{x}^{\bm{a}}, \:\: r_{\bm{a}}\in R. 
\]
Then, we define the {\bf support set} of $f$ by
\[
{\rm Supp}(f):=\{\bm{a}\in\Z^n \ | \ r_{\bm{a}}\not=0 \}, \quad {\rm Supp}(0):=\emptyset 
\]
and define the subset $S(A)$ of $\Z^n$ by 
\[
S(A):=\bigcup_{f\in A}{\rm Supp}(f).
\]
Note that if $A$ is generated by monic monomials, then $M(A)=S(A)$.

For a subset $D\subset \Z^n$, set 
\begin{align*}
\R(D)&:=\left\{\sum_{i=1}^{\ell}\alpha_i\bm{a}_i \ \middle| \ \ell\geq0, \: \bm{a}_i\in D, \: \alpha_i\in\R\right\}\subset\R^n, \\
{\rm Cone}(D)&:=\left\{\sum_{i=1}^{\ell}\alpha_i\bm{a}_i \ \middle| \ \ell\geq0, \: \bm{a}_i\in D, \: \alpha_i\in\R_{\geq0}\right\}\subset\R^n.
\end{align*}
Here, we define the set of lattice points in ${\rm Cone}(S(A))$ by 
\[
C(A):={\rm Cone}(S(A))\cap\Z^n. 
\]
Note that the following holds: $M(A) \subset S(A) \subset C(A)$. 
Define the {\bf gap set} of $A$ by 
\[
\Gap(A):=C(A)\setminus M(A). 
\]
For $\bm{u}=(u_1,\ldots,u_n),\bm{v}=(v_1,\ldots,v_n)\in\R^n$, define the inner product and the norm as below: 
\[
\langle \bm{u},\bm{v}\rangle:=u_1v_1+\cdots+u_nv_n, \quad \|\bm{u}\|:=|u_1|+\cdots+|u_n|. 
\]

In this subsection, we will prove Theorem \ref{fin-gap}. To prove this, we establish several lemmas below. 

\begin{lem} \label{inner}
Let $D$ be a finitely generated submonoid of $\Z^n$. Then the following assertions hold.
\begin{enumerate}
\item There are $s\in\N$ and $\bm{v}_1,\ldots,\bm{v}_s\in\R(D)\setminus\{0\}$ such that 
\[
{\rm Cone}(D)=\bigcap_{j=1}^s\{\bm{u}\in\R(D) \ | \ \langle \bm{u},\bm{v}_j\rangle\geq0\}. 
\]
\item There exists $\bm{w}\in D$ such that $\langle \bm{w},\bm{v}_j\rangle>0$ for all $1\leq j\leq s$. 
\end{enumerate}
\end{lem}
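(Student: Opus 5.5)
We prove both parts by reducing to the classical Minkowski--Weyl theorem for finitely generated cones, working throughout inside the real vector space $V:=\R(D)$. Let $D$ be generated as a monoid by $\bm{d}_1,\ldots,\bm{d}_t$. Then ${\rm Cone}(D)={\rm Cone}(\{\bm{d}_1,\ldots,\bm{d}_t\})$ is a finitely generated polyhedral cone, and $V$ is the real span of the $\bm{d}_i$. Since every $\bm{d}_i$ lies in ${\rm Cone}(D)$, the cone is full-dimensional in $V$.

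\textbf{Part (1).} By Minkowski--Weyl (Farkas' lemma), a finitely generated cone in $V$ is a finite intersection of closed linear half-spaces of $V$. Each such half-space has the form $\{\bm{u}\in V \mid \varphi(\bm{u})\geq 0\}$ for a linear functional $\varphi$ on $V$. We represent $\varphi$ through the inner product restricted to $V$: this restriction is nondegenerate because it is positive definite, so $\varphi=\langle\,\cdot\,,\bm{v}\rangle$ for a unique $\bm{v}\in V$. We discard any trivial functionals (those with $\bm{v}=0$, which cut out all of $V$). This yields $\bm{v}_1,\ldots,\bm{v}_s\in \R(D)\setminus\{0\}$ with the required description. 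The case $s=0$ occurs when ${\rm Cone}(D)=V$, for instance when $D=\{0\}$, and is allowed since $s\in\N$.

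\textbf{Part (2).} We take the explicit candidate
\[
\bm{w}:=\bm{d}_1+\cdots+\bm{d}_t\in D.
\]
For each $j$ we have $\langle \bm{d}_i,\bm{v}_j\rangle\geq 0$ for all $i$, because $\bm{d}_i\in{\rm Cone}(D)$. Hence $\langle\bm{w},\bm{v}_j\rangle\geq0$, with equality only if $\langle \bm{d}_i,\bm{v}_j\rangle=0$ for every $i$. In that case $\bm{v}_j$ is orthogonal to all of $V$, since $V$ is spanned by the $\bm{d}_i$. But $\bm{v}_j\in V$, so $\langle\bm{v}_j,\bm{v}_j\rangle=0$, which forces $\bm{v}_j=0$ and contradicts the choice in part (1). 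Therefore $\langle\bm{w},\bm{v}_j\rangle>0$ for all $j$.

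The main obstacle is part (1), specifically ensuring that the defining vectors can be chosen inside $\R(D)$ and nonzero. The strict positivity in part (2) depends on exactly this: if a $\bm{v}_j$ were allowed outside $V$, it could be orthogonal to $V$ and the argument would fail. Everything else is routine once Minkowski--Weyl is applied within $V$ rather than in $\R^n$.
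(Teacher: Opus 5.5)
Your proof is correct and follows essentially the same route as the paper. In part (1) you apply Minkowski--Weyl inside $\R(D)$; the paper simply cites Ziegler here, while you also justify that the normals can be taken in $\R(D)$ via the positive definite restricted inner product. In part (2) you use the same orthogonality argument forcing $\langle\bm{v}_j,\bm{v}_j\rangle=0$, the only cosmetic difference being that the paper takes $\bm{w}$ to be the sum of an $\R$-basis of $\R(D)$ chosen from $D$, whereas you sum all monoid generators, and either spanning set works.
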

\begin{proof} 
{\bf (1)} Obvious (see e.g., \cite[p.30, Theorem 1.3]{Zie95}). 
\medskip

{\bf (2)} Let $r:=\dim\R(D)$ and $\bm{a}_1,\ldots,\bm{a}_r\in D$ be an $\R$-basis of $\R(D)$. Set $\bm{w}=\sum_{i=1}^r \bm{a}_i \in D$. 
Since $\bm{w},\bm{a}_i\in {\rm Cone}(D)$, it follows from (1) that $\langle \bm{w},\bm{v}_j\rangle\geq0$ and $\langle \bm{a}_i,\bm{v}_j\rangle\geq0$ for any $1\leq i\leq r$ and $1\leq j\leq s$. 
Assume that $\langle \bm{w},\bm{v}_j\rangle=0$ for some $1 \le j \le s$. Then
\[
\langle \bm{a}_1,\bm{v}_j\rangle+\cdots+\langle \bm{a}_r,\bm{v}_j\rangle=\langle \bm{w},\bm{v}_j\rangle=0, 
\] 
hence $\langle \bm{a}_i,\bm{v}_j\rangle=0$ for all $1\leq i\leq r$. Write $\bm{v}_j = \sum_{i=1}^r c_i \bm{a}_i$ for $c_i\in \mathbb{R}$. We then obtain that
\[
\langle \bm{v}_j,\bm{v}_j\rangle 
= \left\langle \sum_{i=1}^r c_i\bm{a}_i,\bm{v}_j\right\rangle
= \sum_{i=1}^r c_i \langle \bm{a}_i,\bm{v}_j\rangle = 0. 
\]
Hence $\bm{v}_j=\bm{0}$, which contradicts the choice of $\bm{v}_j$ in (1). Therefore, $\bm{w}$ is desired, that is, $\langle \bm{w},\bm{v}_j\rangle$ is positive for all $1\leq j\leq s$. 
\end{proof}

\begin{lem} \label{autom-gap}
Let $\sigma\in\Aut_RB$. For $\bm{c}\in\Z^n$, let $\bm{c}'\in\Z^n$ satisfying $\bm{x}^{\bm{c}'}=\sigma(x^{\bm{c}})$. Then $\bm{c}\in \Gap(A)$ if and only if $\bm{c}'\in \Gap(\sigma(A))$. In particular, $|\Gap(A)|=|\Gap(\sigma(A))|$.
\end{lem}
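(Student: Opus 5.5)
By Lemma \ref{autom}(1), there is $\mathcal{A}\in{\rm GL}_n(\Z)$ with $\sigma(\bm{x}^{\bm{b}})=\bm{x}^{\bm{b}\mathcal{A}}$ for every $\bm{b}\in\Z^n$. Hence $\bm{c}'=\bm{c}\mathcal{A}$, and the map $\phi:\Z^n\to\Z^n$, $\bm{b}\mapsto\bm{b}\mathcal{A}$, is a group automorphism. It extends to an $\R$-linear automorphism of $\R^n$, which we also denote by $\phi$. The plan is to show that $\phi$ carries each of the sets $M(A)$, $S(A)$ and $C(A)$ onto the corresponding set for $\sigma(A)$. Then $\phi(\Gap(A))=\Gap(\sigma(A))$ follows, and since $\phi$ is a bijection, so does the equality of cardinalities.

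We treat the three sets in turn.

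\begin{enumerate}
\item For $M$: we have $\bm{x}^{\bm{a}}\in A$ if and only if $\sigma(\bm{x}^{\bm{a}})=\bm{x}^{\phi(\bm{a})}\in\sigma(A)$, because $\sigma$ is a bijection of $B$ onto itself. Hence $\phi(M(A))=M(\sigma(A))$.

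\item For supports: write $f=\sum r_{\bm{a}}\bm{x}^{\bm{a}}$. Then $\sigma(f)=\sum r_{\bm{a}}\bm{x}^{\phi(\bm{a})}$, since $\sigma$ is $R$-linear. Because $\phi$ is injective, no two terms collide, so ${\rm Supp}(\sigma(f))=\phi({\rm Supp}(f))$. Taking the union over $f\in A$, and using that $\sigma(A)=\{\sigma(f)\mid f\in A\}$, gives $S(\sigma(A))=\phi(S(A))$.

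\item For $C$: since $\phi$ is $\R$-linear, it preserves nonnegative combinations, so ${\rm Cone}(\phi(D))=\phi({\rm Cone}(D))$ for any $D\subset\Z^n$. Also $\phi$ maps $\Z^n$ bijectively onto $\Z^n$, because $\mathcal{A}^{-1}$ has integer entries. As $\phi$ is a bijection of $\R^n$, it commutes with intersections, and we get $\phi(C(A))=\phi({\rm Cone}(S(A)))\cap\phi(\Z^n)=C(\sigma(A))$.
\end{enumerate}

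Finally, since $\phi$ is injective, it commutes with set differences. Therefore $\phi(\Gap(A))=C(\sigma(A))\setminus M(\sigma(A))=\Gap(\sigma(A))$, so $\bm{c}\in\Gap(A)$ if and only if $\bm{c}'\in\Gap(\sigma(A))$.

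The only step needing genuine care is the lattice-point part of (3): the argument requires that $\phi^{-1}$ also preserves $\Z^n$. This is exactly where we use $\mathcal{A}\in{\rm GL}_n(\Z)$ rather than merely $\mathcal{A}$ being invertible over $\R$. Everything else is routine bookkeeping.
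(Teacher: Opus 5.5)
Your proposal is correct and follows essentially the same route as the paper: take $\mathcal{A}\in{\rm GL}_n(\Z)$ from Lemma \ref{autom}(1), show that $\bm{b}\mapsto\bm{b}\mathcal{A}$ carries $S(A)$, $M(A)$ and $C(A)$ onto the corresponding sets for $\sigma(A)$, and conclude that it carries $\Gap(A)$ onto $\Gap(\sigma(A))$. Your justification of $\phi(C(A))=C(\sigma(A))$ is more careful than the paper's: you use linearity for the cone and the integrality of $\mathcal{A}^{-1}$ for the lattice points, whereas the paper just extends $T_{\mathcal{A}}$ linearly to $C(A)$ without spelling this out.
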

\begin{proof}
By Lemma \ref{autom} (1), for $\sigma\in\Aut_RB$, there exists $\mathcal{A}\in{\rm GL}_n(\Z)$ such that $\sigma(\bm{x}^{\bm{b}})=\bm{x}^{\bm{b}\mathcal{A}}$ for any $\bm{b}\in\Z^n$. 

Let $f\in A\setminus\{0\}$ and ${\bm{b}}\in {\rm Supp}(f)$. There are $r\in R\setminus\{0\}$ and $g\in B$ such that $f=r\bm{x}^{\bm{b}}+g$. Then, $\sigma(f)=r\bm{x}^{\bm{b}\mathcal{A}}+\sigma(g)$. 
Since $\mathcal{A}\in{\rm GL}_n(\Z)$, there are no monomials in $\sigma(g)$ of the form $\bm{x}^{\bm{b}\mathcal{A}}$, which implies that $\bm{b}\mathcal{A}\in{\rm Supp}(\sigma(f))$. Therefore, the mapping $T_{\mathcal{A}}:S(A)\ni\bm{b}\mapsto\bm{b}\mathcal{A}\in S(\sigma(A))$ is bijective. In particular, the restriction $T_{\mathcal{A}}|_{M(A)}:M(A)\to M(\sigma(A))$ is also bijective. 

Let $\bm{c}\in C(A)$. Then there are $ \ell\geq0,\bm{b}_i\in S(A)$ and $ \alpha_i\in\R_{\geq0}$ such that $\bm{c}=\sum_{i=1}^{\ell}\alpha_i\bm{b}_i$. Here, we extend $T_{\mathcal{A}}$ to $C(A)$ by
\[
T_{\mathcal{A}}(\bm{c}):=\bm{c}\mathcal{A}=\sum_{i=1}^{\ell}\alpha_i\bm{b}_i\mathcal{A}. 
\] 
Since the condition $\bm{c}\not\in M(A)$ is equivalent to the condition $\bm{c}\mathcal{A}\not\in M(\sigma(A))$, the map ${T}_{\mathcal{A}}$ gives the bijection $\Gap(A)\ni \bm{c}\mapsto \bm{c}\mathcal{A}\in\Gap(\sigma(A))$. 
\end{proof}

\begin{lem} \label{R[H]}
Let $H=\R(S(A))\cap\Z^n$ and $r=\rank_{\Z} H$. Then there are $y_1,\ldots,y_n\in B$ such that $B=R[y_1^{\pm1},\ldots,y_n^{\pm1}]$ and $R[H]=R[y_1^{\pm1},\ldots,y_r^{\pm1}]$. In particular, $B\cong R[H]^{[\pm(n-r)]}$. 
\end{lem}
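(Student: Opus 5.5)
The plan is to show that $H$ is a saturated sublattice of $\Z^n$ and then choose a basis of $\Z^n$ that extends a basis of $H$. First, $H=\R(S(A))\cap\Z^n$ is a subgroup of $\Z^n$, because $\R(S(A))$ is an $\R$-linear subspace of $\R^n$. Being a subgroup of a free abelian group of finite rank, $H$ is free of some rank $r\le n$.

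The key step is that $\Z^n/H$ is torsion-free. Suppose $\bm{c}\in\Z^n$ and $m\bm{c}\in H$ for some positive integer $m$. Then $m\bm{c}\in\R(S(A))$, so $\bm{c}=\frac1m(m\bm{c})\in\R(S(A))$, and hence $\bm{c}\in H$. Since $\Z^n/H$ is finitely generated and torsion-free, it is free. The exact sequence $0\to H\to\Z^n\to\Z^n/H\to 0$ therefore splits, and $H$ is a direct summand of $\Z^n$. Concretely, by the elementary divisor theorem there is a basis $\bm{b}_1,\ldots,\bm{b}_n$ of $\Z^n$ and positive integers $d_1,\ldots,d_r$ such that $d_1\bm{b}_1,\ldots,d_r\bm{b}_r$ is a basis of $H$. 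Torsion-freeness of $\Z^n/H$ forces every $d_i=1$, so $\bm{b}_1,\ldots,\bm{b}_r$ is a $\Z$-basis of $H$.

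Next, let $\mathcal{A}\in{\rm GL}_n(\Z)$ be the matrix whose $i$-th row is $\bm{b}_i$, and set $y_i:=\bm{x}^{\bm{b}_i}=\bm{x}^{\bm{e}_i\mathcal{A}}$. The assignment $x_i\mapsto y_i$ defines an $R$-algebra endomorphism $\sigma$ of $B$ with $\sigma(\bm{x}^{\bm{a}})=\bm{x}^{\bm{a}\mathcal{A}}$, as in the computation of Lemma \ref{autom}(1). The matrix $\mathcal{A}^{-1}$ defines an inverse endomorphism, so $\sigma$ is an $R$-automorphism. Hence $B=\sigma(B)=R[y_1^{\pm1},\ldots,y_n^{\pm1}]$, and the $y_i$ are algebraically independent Laurent variables.

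It remains to identify $R[H]$. Here $R[H]$ is the $R$-span of the monomials $\bm{x}^{\bm{h}}$ with $\bm{h}\in H$. Each such $\bm{h}$ is a $\Z$-combination $\sum_{i\le r}c_i\bm{b}_i$, so $\bm{x}^{\bm{h}}=\prod_{i\le r} y_i^{c_i}$. Conversely, each $y_i^{\pm1}$ with $i\le r$ equals $\bm{x}^{\pm\bm{b}_i}$ and lies in $R[H]$. Thus $R[H]=R[y_1^{\pm1},\ldots,y_r^{\pm1}]$, and consequently $B=R[H][y_{r+1}^{\pm1},\ldots,y_n^{\pm1}]\cong R[H]^{[\pm(n-r)]}$.

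The only substantive step is the saturation of $H$, that is, the torsion-freeness of $\Z^n/H$, together with the standard fact that a saturated sublattice admits a basis extendable to a basis of $\Z^n$. The rest is bookkeeping with Lemma \ref{autom}(1).
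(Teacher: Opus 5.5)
Your proposal is correct and follows essentially the same route as the paper: you show $\Z^n/H$ is torsion-free because $\R(S(A))$ is an $\R$-subspace, conclude that $H$ is a direct summand of $\Z^n$ with a basis extending to one of $\Z^n$, and set $y_i=\bm{x}^{\bm{b}_i}$. The only difference is that you spell out details the paper leaves implicit, namely the elementary-divisor argument forcing $d_i=1$ and why $x_i\mapsto y_i$ defines an $R$-automorphism of $B$.
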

\begin{proof}
For the natural inclusion $H\to G:=\Z^n$, we have the following exact sequence of $\Z$-modules:
\[
0\to H\to G\to G/H\to0. 
\]
Then $G/H$ is $\Z$-torsion free. Indeed, if $\bm{b}\in G$ is a torsion element of $G/H$, then there is $m\in\Z\setminus\{0\}$ such that $m\bm{b}\in H$. Since $\bm{b}=m^{-1}\cdot m\bm{b}\in \R(S(A))$, we have $\bm{b}\in H$. 
Thus, $H$ is a direct summand of $G$. Let $\bm{c}_1,\ldots,\bm{c}_n\in G$ be a $\Z$-basis of $G$ such that 
\[
H=\bigoplus_{i=1}^r\Z \bm{c}_i\cong\Z^r.
\]
Set $y_i=\bm{x}^{\bm{c}_i}$ for $1\leq i\leq n$. Since the $n\times n$ matrix $(\bm{c}_1\:\cdots\:\bm{c}_n)$ is an element of ${\rm GL}_n(\Z)$, we have $B=R[y_1^{\pm1},\ldots,y_n^{\pm1}]$ and $R[H]=R[y_1^{\pm1},\ldots,y_r^{\pm1}]$. 
\end{proof}

\begin{lem} \label{key lemma}
Suppose that there exists $\bm{w}\in\Z^n$ such that $\bm{w},\bm{w}+\bm{e}_i \in M(A)$ for all $1\leq i\leq n$. Then $A$ is qfc in $B$.  
\end{lem}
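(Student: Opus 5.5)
The plan is to reduce to Proposition \ref{new criterion} by showing that the localization $S^{-1}A$, with $S:=A\cap B^*$, is all of $B$. Since $B$ is trivially qfc in itself, this gives that $A$ is qfc in $B$.

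The steps are as follows.
\begin{enumerate}
\item By hypothesis the monic monomials $\bm{x}^{\bm{w}}$ and $\bm{x}^{\bm{w}+\bm{e}_i}$ ($1\leq i\leq n$) lie in $A$. They are units of $B$, so they lie in $S$.
\item In $S^{-1}A$ we can therefore form
\[
x_i=\bm{x}^{\bm{w}+\bm{e}_i}\cdot(\bm{x}^{\bm{w}})^{-1}\in S^{-1}A
\quad\text{and}\quad
x_i^{-1}=\bm{x}^{\bm{w}}\cdot(\bm{x}^{\bm{w}+\bm{e}_i})^{-1}\in S^{-1}A
\]
for every $1\leq i\leq n$.
\item Since $R\subset A$, this gives $B=R[x_1^{\pm1},\ldots,x_n^{\pm1}]\subset S^{-1}A$.
\item The reverse inclusion $S^{-1}A\subset B$ holds because $A\subset B$ and $S\subset B^*$. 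Hence $S^{-1}A=B$.
\item $B$ is qfc in $B$ (take $u=v=1$), so Proposition \ref{new criterion} shows that $A$ is qfc in $B$.
\end{enumerate}

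There is essentially no obstacle. The only point to check is that the localization is taken inside $B$, and this is guaranteed by $S\subset B^*$. An alternative route is Lemma \ref{strong qfc}: given $f\in B\setminus\{0\}$, multiply $f$ by a suitable monomial unit so that every term of $f$ becomes a product of the available monomials. The localization argument above is cleaner, so I would use it.
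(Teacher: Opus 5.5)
Your proof is correct and is essentially identical to the paper's: the paper also places $\bm{x}^{\bm{w}}$ and $\bm{x}^{\bm{w}}x_i$ in $S=A\cap B^*$, writes $x_i^{\pm1}$ as quotients of them to get $S^{-1}A=B$, and concludes by Proposition~\ref{new criterion}. Your extra observations, that $S\subset B^*$ keeps the localization inside $B$ and that $B$ is trivially qfc in itself, are exactly the implicit steps in that argument.
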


\begin{proof}
Set $S=A\cap B^*$. Since $\bm{w},\bm{w}+\bm{e}_i \in M(A)$, we have $\bm{x}^{\bm{w}},\bm{x}^{\bm{w}}x_i\in A\cap B^*=S$ for all $1\leq i\leq n$. Thus, 
\[
x_i=\dfrac{\:\bm{x}^{\bm{w}}x_i\:}{\bm{x}^{\bm{w}}}, \:\: x_i^{-1}=\dfrac{\:\bm{x}^{\bm{w}}\:}{\:\bm{x}^{\bm{w}}x_i\:}\in S^{-1}A. 
\]
Therefore $S^{-1}A=B$. It follows from Proposition \ref{new criterion} that $A$ is qfc in $B$. 
\end{proof}


Here, we prove Theorem \ref{fin-gap}. 

\begin{proof}[Proof of Theorem \ref{fin-gap}] 
Suppose that $\Gap(A)$ is finite. Set $H:=\R(S(A))\cap\Z^n$ and $r:=\rank_{\Z} H$. Then $A$ is an $R$-subalgebra of the monoid algebra $R[H]$. Let $\{\bm{a}_1,\ldots,\bm{a}_r\}\subset S(A)$ be an $\R$-basis of $\R(S(A))$ and set $D:=\sum_{i=1}^r\N\bm{a}_i$. We note that the following inclusions are hold: 
\[
D\subset S(A)\subset C(A) \subset H. 
\]
By Lemmas \ref{autom} and \ref{R[H]}, there exists $\mathcal{C}=(c_{ij})\in{\rm GL}_n(\Z)$ such that $B=R[y_1^{\pm1},\ldots,y_n^{\pm1}]$ and $R[H]=R[y_1^{\pm1},\ldots,y_r^{\pm1}]$, where $y_i=\bm{x}^{\bm{c}_i}$ and $\bm{c}_i=(c_{i1},\ldots,c_{in})$ for $1\leq i\leq n$. Hence, Proposition \ref{coeff} implies that $R[H]$ is pfc in $B$. It follows from Lemma \ref{tower} that it is enough to prove that $A$ is qfc in $R[H]$.

Since $D$ is finitely generated, it follows from Lemma \ref{inner} that there are $s\in\N$ and $\bm{v}_1,\ldots,\bm{v}_s\in\R(D)$ such that 
\begin{equation} \label{hyperplane}
{\rm Cone}(D)=\bigcap_{j=1}^s\{\bm{u}\in\R(D) \ | \ \langle \bm{u},\bm{v}_j\rangle\geq0\} 
\end{equation}
and there exists $\bm{w}\in D$ such that $ \langle \bm{w},\bm{v}_j\rangle>0$ for each $j\in \{1,\ldots,s\}$. Let $N$ be the positive integer satisfying $N>\max\{N_1,N_2\}$, where 
\begin{align*}
N_1&:=\max\left\{\left.\dfrac{|\langle\bm{c}_i,\bm{v}_j\rangle|}{\langle\bm{w},\bm{v}_j\rangle} \ \right| \  1\leq i\leq r, \:1\leq j\leq s \right\}, \\
N_2&:=\max\{\|\bm{g}\|+\|\bm{c}_i\| \ | \ \bm{g}\in{\rm Gap}(A), \: 1\leq i\leq r\}. 
\end{align*}
Then the following claim holds. 
\medskip

\noindent
{\bf Claim.}\: 
$N\bm{w},N\bm{w}+\bm{c}_i\in M(A)$ for $1\leq i \leq r$. 
\medskip 

\noindent 
{\it Proof of Claim.}
First, we show that $N\bm{w},N\bm{w}+\bm{c}_i\in C(A)$ for $1\leq i \leq r$. Since $D\subset C(A)$, $N\bm{w}\in C(A)$. For $1\leq i\leq r$ and $1\leq j\leq s$, 
\[
\langle N\bm{w}+\bm{c}_i,\bm{v}_j\rangle
=N\langle \bm{w},\bm{v}_j\rangle+\langle \bm{c}_i,\bm{v}_j\rangle
> N_1\langle \bm{w},\bm{v}_j\rangle+\langle \bm{c}_i,\bm{v}_j\rangle
\geq0. 
\]
By the equation (\ref{hyperplane}), $N\bm{w}+\bm{c}_i\in {\rm Cone}(D)$ and hence
\[
N\bm{w}+\bm{c}_i\in \left({\rm Cone}(D)\cap\Z^n\right)\subset \left({\rm Cone}(S(A))\cap\Z^n\right)=C(A). 
\]

In order to show the claim, take $\bm{g}\in{\rm Gap}(A)$. Then
\[
\|N\bm{w}\|=N\|\bm{w}\| \geq N>N_2>\|\bm{g}\|
\]
and
\begin{align*}
\|N\bm{w}+\bm{c}_i\|
>N_2\|\bm{w}\|-\|\bm{c}_i\|
>N_2-\|\bm{c}_i\|
\geq \|\bm{g}\|.
\end{align*}
Therefore, for $1\leq i \leq r$, we have $N\bm{w},N\bm{w}+\bm{c}_i\in C(A)\setminus \rm{Gap}(A)=M(A)$. This proves the claim.
\medskip

Let $\sigma\in\Aut_R(B)$ be the automorphism defined by $\sigma(y_i)=x_i$ for $1\leq i\leq n$.  
Then
\begin{align*}
\sigma(\bm{x}^{N\bm{w}+\bm{c}_i})=\sigma(\bm{x}^{N\bm{w}})\cdot\sigma(y_i)=\sigma(\bm{x}^{N\bm{w}})\cdot x_i. 
\end{align*}
Set $\bm{x}^{\bm{w}_0}:=\sigma(\bm{x}^{N\bm{w}})$ for some $\bm{w}_0\in\Z^n$. Then $\sigma(\bm{x}^{N\bm{w}+\bm{c}_i})=\bm{x}^{\bm{w}_0+\bm{e}_i}$. 
It follows from Lemma \ref{autom-gap} and Claim that $\bm{w}_0, \bm{w}_0+\bm{e}_i\in M(\sigma(A))$ for $1\leq i\leq r$. By applying Lemma \ref{key lemma} to $\bm{w}_0$, we see that  $\sigma(A)$ is qfc in $\sigma(R[H])$. Thus, by Lemma \ref{autom} (2), $A$ is qfc in $R[H]$. 

The proof of Theorem \ref{fin-gap} is complete.
\end{proof}


\section{Subalgebras generated by monic monomials}

In this section, we use the terminology introduced in Section 3 and consider a subalgebra $A$ of $B=R[x_1^{\pm1},\ldots,x_n^{\pm1}]$ which is generated by monic monomials. Let $S:=A\cap B^*$. 

The following is the main result in this section. 

\begin{thm} \label{main}
Let $A$ be an $R$-subalgebra generated by monic monomials over $R$.  Then the following conditions are equivalent.
\begin{enumerate}
\item $A$ is qfc in $B$.
\item $M(B)/M(S^{-1}A)$ is a torsion free $\Z$-module. 
\item There exists $r\in\N$ and $y_1,\ldots,y_n\in B$ such that $B=R[y_1^{\pm1},\ldots,y_n^{\pm1}]$ and $S^{-1}A=R[y_1^{\pm1},\ldots,y_r^{\pm1}]$. Therefore, $B\cong (S^{-1}A)^{[\pm(n-r)]}$. 
\item $\Gap(S^{-1}A)=\emptyset$. 
\end{enumerate}
\end{thm}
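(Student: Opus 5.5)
Throughout, set $G:=M(S^{-1}A)\subset\Z^n$; note that $M(B)=\Z^n$. The first step is to identify $S^{-1}A$ explicitly. Since $R$ is a domain, $B^*=\{r\bm{x}^{\bm a}\mid r\in R^*,\ \bm a\in\Z^n\}$. Since $A$ is generated by monic monomials, $A=R[M(A)]$, and $S=A\cap B^*$ consists of the elements $r\bm{x}^{\bm a}$ with $r\in R^*$ and $\bm a\in M(A)$. Hence
\[
S^{-1}A=R[\langle M(A)\rangle],
\]
the group algebra of the subgroup generated by $M(A)$. In particular $G=\langle M(A)\rangle$ is a subgroup of $\Z^n$. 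Moreover $S^{-1}A$ is the $R$-span of $\{\bm{x}^{\bm g}\mid \bm g\in G\}$, and $S(S^{-1}A)=G$. With this description I will prove $(1)\Rightarrow(2)\Rightarrow(3)\Rightarrow(1)$ and then $(3)\Rightarrow(4)\Rightarrow(2)$.

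\emph{$(1)\Rightarrow(2)$.} By Proposition \ref{new criterion}, $S^{-1}A=R[G]$ is qfc in $B$. Let $\bm b\in\Z^n$ and $m\geq1$ with $m\bm b\in G$, and suppose $\bm b\neq 0$. I adapt the argument of Proposition \ref{pfc normal}, writing
\[
\bm{x}^{m\bm b}-1=(\bm{x}^{\bm b}-1)(1+\bm{x}^{\bm b}+\cdots+\bm{x}^{(m-1)\bm b})\in R[G],
\]
where both factors are nonzero. By qfc there is $u=r\bm{x}^{\bm c}\in B^*$ with $r\in R^*$ such that $r(\bm{x}^{\bm c+\bm b}-\bm{x}^{\bm c})\in R[G]$. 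Since $R[G]$ is spanned by the monomials $\bm{x}^{\bm g}$ with $\bm g\in G$, and $\bm c+\bm b\neq\bm c$, both $\bm c+\bm b$ and $\bm c$ lie in $G$. Hence $\bm b\in G$. So $\Z^n/G$ is torsion free. Only the first factor is used here, which is why qfc suffices where Proposition \ref{pfc normal} needed pfc. I expect this to be the main step; the subtle point is the explicit form of the units of $B$.

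\emph{$(2)\Rightarrow(3)$.} A torsion-free quotient of the finitely generated group $\Z^n$ is free, so $G$ is a direct summand of $\Z^n$. Exactly as in the proof of Lemma \ref{R[H]}, choose a $\Z$-basis $\bm c_1,\ldots,\bm c_n$ of $\Z^n$ with $G=\bigoplus_{i\le r}\Z\bm c_i$, and put $y_i=\bm{x}^{\bm c_i}$. This gives $B=R[y_1^{\pm1},\ldots,y_n^{\pm1}]$ and $S^{-1}A=R[y_1^{\pm1},\ldots,y_r^{\pm1}]$.

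\emph{$(3)\Rightarrow(1)$.} The $R$-automorphism of $B$ sending $y_i\mapsto x_i$ carries $S^{-1}A$ onto $R[x_1^{\pm1},\ldots,x_r^{\pm1}]$. This algebra is pfc in $B$ by Proposition \ref{pfc} (take $\delta_i=\pm1$ for $i\le r$ and $\delta_i=0$ otherwise). Lemma \ref{autom}(2) then shows $S^{-1}A$ is qfc in $B$, and Proposition \ref{new criterion} gives that $A$ is qfc in $B$.

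\emph{$(3)\Rightarrow(4)$ and $(4)\Rightarrow(2)$.} Since $G$ is a group, $\mathrm{Cone}(G)=\R(G)$, so $C(S^{-1}A)=\R(G)\cap\Z^n$, while $M(S^{-1}A)=G$. Under (3), $G$ is a direct summand of $\Z^n$, so $\R(G)\cap\Z^n=G$ and therefore $\Gap(S^{-1}A)=\emptyset$. Conversely, assume (4) and let $m\bm b\in G$ with $m\ge1$. Then $\bm b=m^{-1}(m\bm b)\in\R(G)\cap\Z^n=C(S^{-1}A)$. Because the gap set is empty, $C(S^{-1}A)=M(S^{-1}A)=G$, so $\bm b\in G$, which is (2).
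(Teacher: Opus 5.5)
Your proof is correct. Your $(1)\Rightarrow(2)$ and $(2)\Rightarrow(3)$ are the paper's arguments, with slightly more care about the unit $r\in R^*$ and the case $\bm b=0$. You close the cycle differently, though.

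The paper proves $(3)\Rightarrow(4)$ by transporting the gap set along the automorphism $y_i\mapsto x_i$ (Lemma \ref{autom-gap}). It then gets $(4)\Rightarrow(1)$ from Theorem \ref{fin-gap}, whose proof uses the cone and hyperplane machinery of Lemmas \ref{inner} and \ref{key lemma}. You instead obtain $(3)\Rightarrow(1)$ directly from Proposition \ref{pfc} (equivalently Proposition \ref{coeff} over $R[x_1^{\pm1},\ldots,x_r^{\pm1}]$), Lemma \ref{autom}(2) and Proposition \ref{new criterion}. You handle (4) through the observation that $\mathrm{Cone}(G)=\R(G)$ for a group. Hence $\Gap(S^{-1}A)=(\R(G)\cap\Z^n)\setminus G$, and emptiness of the gap set just says $G$ is saturated.

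Your route is more elementary and self-contained, and it shows in passing that $S^{-1}A$ is even pfc in $B$. In fact, when $\Gap(S^{-1}A)=\emptyset$, the group $H=\R(G)\cap\Z^n$ in the proof of Theorem \ref{fin-gap} equals $G$. So the first step of that proof (that $R[H]$ is pfc in $B$) already gives the conclusion and coincides with your argument. What the paper's route buys is only the structural point that Theorem \ref{main} is a special case of the general gap-set criterion.

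One step is left implicit: in your $(3)\Rightarrow(4)$ you assert without justification that (3) makes $G$ a direct summand. You can fill this in two ways. One is to note that each $y_i\in B^*$ is a unit multiple of some $\bm x^{\bm c_i}$ with $\{\bm c_i\}$ a $\Z$-basis of $\Z^n$, so that $G=\bigoplus_{i\le r}\Z\bm c_i$. The other is to pass through $(3)\Rightarrow(1)\Rightarrow(2)$ and use that (2) gives a direct summand, as in your $(2)\Rightarrow(3)$.
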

\begin{proof} 
Since $A$ is generated by monic monomials, $A=R[M(A)]$.  
Set $G=M(B)$ and $H=M(S^{-1}A)$. Note that, $H$ is the group generated by $M(A)$ and the monoid algebras are $R[G]=B$ and $R[H]=S^{-1}A$ respectively. 

\medskip
{\bf (1)} $ \Longrightarrow$  {\bf (2).}  
Suppose that $A$ is qfc in $B$. By Proposition \ref{new criterion}, $S^{-1}A$ is also qfc in $B$.
Let $\bm{t}=(t_1,\ldots,t_n)\in G$ be a torsion element of $G/H$. Then there exists $\alpha\geq1$ such that $\alpha\bm{t}\in H$. There are $r\geq1$, $c_1,\ldots,c_r\in\Z$ and $\bm{a}_1,\ldots,\bm{a}_r\in M(A)$ such that $\alpha\bm{t}=\sum_{i=1}^rc_i\bm{a}_{i}$. Then $(\bm{x}^{\bm{t}})^{\alpha}=\prod_{i=1}^r(\bm{x}^{\bm{a}_{i}})^{c_i}\in S^{-1}A$, hence
\[
(\bm{x}^{\bm{t}}-1)\cdot\sum_{j=0}^{m-1}(\bm{x}^{\bm{t}})^j=(\bm{x}^{\bm{t}})^{\alpha}-1\in S^{-1}A.
\]
As $S^{-1}A$ is qfc in $B$, there exists $\bm{b}=(b_1,\ldots,b_n)\in G$ such that 
\[
\bm{x}^{\bm{b+t}}-\bm{x}^{\bm{b}}=\bm{x}^{\bm{b}}(\bm{x}^{\bm{t}}-1)\in S^{-1}A.
\] 
Since $S^{-1}A$ is generated by monic monomials, we have $\bm{x}^{\bm{b+t}},\bm{x}^{\bm{b}}\in S^{-1}A$ and hence $\bm{b}+\bm{t}, \bm{b}\in H$. Thus, $\bm{t}=(\bm{b}+\bm{t})-\bm{b}\in H$, which implies that $G/H$ is torsion free. 

\medskip
{\bf (2)} $ \Longrightarrow$  {\bf (3).}  
Suppose that $G/H$ is torsion free. Since $G\cong\Z^n$ is a free module over a PID $\Z$, $H$ is also free and a direct summand of $G$. 
Let $r\in\{0,\ldots,n\}$ be the $\Z$-rank of $H$ and $\bm{b}_1,\ldots,\bm{b}_n\in G$ be a $\Z$-basis of $G$ with $H=\bigoplus_{i=1}^r\Z\bm{b}_i$. Set $y_i=\bm{x}^{\bm{b}_i}$ for $1\leq i\leq r$. Then $B=R[G]=R[y_1^{\pm1},\ldots,y_n^{\pm1}]$ and $S^{-1}A=R[H]=R[y_1^{\pm1},\ldots,y_r^{\pm1}]$. 

\medskip
{\bf (3)} $ \Longrightarrow$  {\bf (4).}  
Suppose that there are $r\in\N$ and $y_1,\ldots,y_n\in B$ such that $B=R[y_1^{\pm1},\ldots,y_n^{\pm1}]$ and $S^{-1}A=R[y_1^{\pm1},\ldots,y_r^{\pm1}]$. Let $\sigma\in\Aut(B)$ be the $R$-algebra automorphism of $B$ defined by $\sigma(y_i)=x_i$ for $1\leq i\leq n$. Then $\sigma(S^{-1}A)=R[x_1^{\pm1},\ldots,x_r^{\pm1}]$ and hence $\Gap(\sigma(S^{-1}A))=\emptyset$. It follows from Lemma \ref{autom-gap} that $\Gap(S^{-1}A)$ is also the empty set. 

\medskip
{\bf (4)} $ \Longrightarrow$  {\bf (1).}  
By Theorem \ref{fin-gap}, $S^{-1}A$ is qfc in $B$. Therefore, it follows from Proposition \ref{new criterion} that $A$ is qfc in $B$. 
\end{proof}

The following example shows that the condition (4) of Theorem \ref{main} does not imply the finiteness of $\Gap(A)$. In particular, the converse of Theorem \ref{fin-gap} does not hold in general. 

\begin{example} \label{infinite}
{\rm
Let $B:=R[x^{\pm1},y^{\pm1}]$ be the Laurent polynomial ring in two variables over $R$, $A:=R[x^2,y^2,x^2y^3,x^3y^2,x^3y^3]$ and $S=A\cap B^*$. Then $S^{-1}A=B$ and hence $\Gap({S^{-1}A})=\emptyset$. By Theorem \ref{main}, $A$ is qfc in $B$. However, since
\begin{align*}
M(A)=\{(2m,0),(0,2m),(a,b) \ | \ m\geq0,\:a\geq2,\: b\geq2\}, \:\: 
C(A)=\{(a,b) \ | \ a,b\in\N\}, 
\end{align*}
we have
\[
\Gap(A)=\{(2m+1,0),(0,2m+1),\:(m,1),(1,m)\ | \ m\in\Z_{>0}\}.  
\]
This implies that $\Gap(A)$ is infinite. 
}
\end{example}

\subsection{Proof of Theorem \ref{higher numerical semigroup}}
Here, we prove Theorem \ref{higher numerical semigroup}. Let $M$ be a submonoid of $\Z^n$, $B=R[\Z^n]\cong R^{[\pm n]}$ and $S=R[M]\cap B^*$. 

\begin{proof}[Proof of Theorem \ref{higher numerical semigroup}]
As $R[\langle M\rangle]=S^{-1}R[M]$, the equivalence of (1) and (2) follows from Proposition \ref{new criterion}. 
Since 
\[
\langle M\rangle=M(S^{-1}R[M]) \quad \text{and} \quad {\rm Cone}(\langle M\rangle)\cap\Z^n=C(S^{-1}R[M]),
\]
we have
\[
{\rm Cone}(\langle M\rangle)\cap\Z^n\setminus \langle M\rangle={\rm Gap}(S^{-1}R[M]). 
\]
Hence (3) and (4) are equivalent to conditions (2) and (4), respectively, of Theorem \ref{main}.

Therefore, the proof of Theorem \ref{higher numerical semigroup} is complete.
\end{proof}

\begin{cor} \label{ac}
If $R[M]$ is ac in $B$, then it is qfc in $B$. 
\end{cor}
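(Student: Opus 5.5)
We will verify condition (3) of Theorem \ref{higher numerical semigroup}, namely that $\Z^n/\langle M\rangle$ is torsion free; the theorem then gives that $R[M]$ is qfc in $B$. So let $\bm{t}\in\Z^n$ and $\alpha\geq 1$ with $\alpha\bm{t}\in\langle M\rangle$. We must show $\bm{t}\in\langle M\rangle$.

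The first step is to bring $\bm{t}$ into $M$ up to a translate by an element of $M$. Write $\alpha\bm{t}=\bm{a}-\bm{b}$ with $\bm{a},\bm{b}\in M$. Set $\bm{s}:=\bm{t}+\bm{b}$. Then
\[
\alpha\bm{s}=\alpha\bm{t}+\alpha\bm{b}=\bm{a}+(\alpha-1)\bm{b}\in M.
\]
Hence the monomial $z:=\bm{x}^{\bm{s}}\in B$ satisfies $z^{\alpha}-\bm{x}^{\alpha\bm{s}}=0$. Since $\bm{x}^{\alpha\bm{s}}\in R[M]$, $z$ is a root of the nonzero polynomial $T^{\alpha}-\bm{x}^{\alpha\bm{s}}\in R[M][T]$ (it is monic, so nonzero). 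Thus $z$ is algebraic over $R[M]$.

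Because $R[M]$ is ac in $B$, we get $\bm{x}^{\bm{s}}\in R[M]$. Since $R[M]$ is spanned over $R$ by the monic monomials $\bm{x}^{\bm{m}}$ with $\bm{m}\in M$, comparing supports gives $\bm{s}\in M$. Therefore
\[
\bm{t}=\bm{s}-\bm{b}\in\langle M\rangle,
\]
so $\Z^n/\langle M\rangle$ is torsion free. Torsion-freeness of this finitely generated quotient means $\langle M\rangle$ is a direct summand of $\Z^n$, and Theorem \ref{higher numerical semigroup} (3)$\Rightarrow$(1) finishes the proof.

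No step is a real obstacle. The only point needing care is the translation by $\bm{b}$, which ensures that the element whose $\alpha$-th power we use lies in $M$ itself and not merely in $\langle M\rangle$. Alternatively, one could apply the ac property to $\bm{x}^{\bm{t}}$ over $S^{-1}R[M]$, but ac is not obviously stable under localization, so the translation argument is the one to use.
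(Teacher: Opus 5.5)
Your proof is correct. It reaches the same intermediate goal as the paper, torsion-freeness of $\Z^n/\langle M\rangle$ followed by the main theorem, but it produces the algebraic relation differently.

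The paper works with $\eta=\bm{x}^{\bm t}$ itself. Since $\eta^{\alpha}\in S^{-1}R[M]$, $\eta$ is integral over $S^{-1}R[M]$. Multiplying the integral equation by a common denominator $s\in S$ gives a non-monic relation $s\eta^r+\widetilde{a}_1\eta^{r-1}+\cdots+\widetilde{a}_r=0$ over $R[M]$ itself. Then ac gives $\eta\in R[M]$, i.e.\ even $\bm t\in M$. So the ``alternative'' you set aside is exactly the paper's route. It needs no localization-stability of ac, because ac refers to arbitrary nonzero polynomials, not just monic ones. More directly still, $\bm{x}^{\bm t}$ is a root of the nonzero polynomial $\bm{x}^{\bm b}T^{\alpha}-\bm{x}^{\bm a}\in R[M][T]$.

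Your translation by $\bm b$ instead yields a \emph{monic} relation over $R[M]$. Consequently your argument uses only that $R[M]$ is integrally closed in $B$, and in fact only that $m\bm s\in M$ with $m\geq 1$ forces $\bm s\in M$. It therefore proves a genuinely stronger statement. For instance, it covers $R[x]\subset R[x^{\pm1}]$, which is integrally closed in $B$ but not ac, since $x^{-1}$ is a root of $xT-1$. In fact ac forces $M$ to be a group: for $\bm a\in M$, the element $\bm{x}^{-\bm a}$ is a root of $\bm{x}^{\bm a}T-1$. So the paper's hypothesis is much more restrictive than what your argument needs.
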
 
\begin{proof}
Set $A:=R[M]$ and $S=A\cap B^*$. Then $M=M(A)$ and $\langle M\rangle=M(S^{-1}A)$. If $m\bm{a}\in \langle M\rangle$ for some $\bm{a}\in M(B)=\Z^n$ and $m\in\N$, then $(\bm{x}^{\bm{a}})^m=\bm{x}^{m\bm{a}}\in S^{-1}A$. Thus, $\eta:=\bm{x}^{\bm{a}}$ is integral over $S^{-1}A$. There are $r\geq0$, $a_1,\ldots,a_r\in A$ and $s_1,\ldots,s_r\in S$ such that 
\[
\eta^r+\dfrac{\:a_1\:}{s_1}\eta^{r-1}+\cdots+\dfrac{\:a_{r-1}\:}{s_{r-1}}\eta+\dfrac{\:a_r\:}{s_r}=0. 
\]
By letting $s=s_1\cdots s_r$ and $\widetilde{a}_i=sa_is_i^{-1}\in A$ for $1\leq i\leq r$, we have
\[
s\eta^r+\widetilde{a}_1\eta^{r-1}+\cdots+\widetilde{a}_{r-1}\eta+\widetilde{a}_r=0, 
\]
which implies that $\eta$ is algebraic over $A$. By the assumption on $A$, $\eta\in A$ and hence $\bm{a}\in M\subset \langle M\rangle$. Therefore, $M(B)/\langle M\rangle$ is $\Z$-torsion free. It follows from Theorem \ref{main} that $A$ is qfc in $B$. 
\end{proof}

However, the converse of Corollary \ref{ac} does not hold in general. Such example is in Example \ref{non-normal}.

\subsection{One variable case}
When $B$ is the Laurent polynomial ring in one variable over $R$, the conditions of Theorem \ref{main} can be expressed in a simpler form as below. In particular, the converse of Theorem \ref{fin-gap} holds. 

\begin{thm} \label{one var}
Let $B=R[x^{\pm1}]$ be the Laurent polynomial ring in one variable over $R$ and $A$ be an $R$-subalgebra of $B$ generated by monic monomials. Assume that $A\not=R$. Set $S=A\cap B^*$. Then the following conditions are equivalent.
\begin{enumerate}
\item $A$ is qfc in $B$.
\item $\gcd(a \ | \ a\in M(A))=1$. 
\item $S^{-1}A=B$. 
\item $\Gap(A)$ is a finite set. 
\end{enumerate}
\end{thm}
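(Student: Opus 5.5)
Since $A$ is generated by monic monomials, we have $A=R[M]$ with $M:=M(A)\subset\Z$, and $S^{-1}A=R[\langle M\rangle]$ with $\langle M\rangle=M(S^{-1}A)$. Because $A\neq R$, the monoid $M$ contains a nonzero integer. The plan is to prove (1)$\Leftrightarrow$(2)$\Leftrightarrow$(3) from Theorem \ref{main}, and then to attach (4) to this cycle via Theorem \ref{fin-gap} together with a direct computation.

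For (1)$\Leftrightarrow$(2)$\Leftrightarrow$(3), note first that $\langle M\rangle=d\Z$ with $d=\gcd(a\mid a\in M)\ge1$; this uses $M\neq\{0\}$. By Theorem \ref{main}, (1) holds if and only if $\Z/d\Z$ is torsion free, which happens exactly when $d=1$. This gives (1)$\Leftrightarrow$(2). Next, $d=1$ means $\langle M\rangle=\Z$, which is equivalent to $S^{-1}A=R[\Z]=B$. This gives (2)$\Leftrightarrow$(3).

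For (4)$\Rightarrow$(1) we simply apply Theorem \ref{fin-gap}. The remaining implication, (2)$\Rightarrow$(4), is the only real content. We split into cases by the signs of the elements of $M$, using $C(A)={\rm Cone}(M)\cap\Z$.
\begin{itemize}
\item If $M\subset\N$, then $C(A)=\N$. Since $\gcd M=1$, there are finitely many elements of $M$ with gcd $1$. By the standard Frobenius/numerical semigroup argument, every sufficiently large integer is an $\N$-combination of them, so $\N\setminus M$ is finite.
\item If $M\subset-\N$, we apply the automorphism $x\mapsto x^{-1}$ and use Lemma \ref{autom-gap} to reduce to the previous case.
\item If $M$ contains some $p>0$ and some $-q<0$, then $C(A)=\Z$. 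In this case $M$ contains the group $\langle p,q\rangle\supset pq\Z$, since $-p q=(-q)p\in M$ and $pq\in M$, so $M$ is closed under negating multiples of $pq$. Moreover $M$ contains the numerical-semigroup-type submonoid generated by its positive elements. I would show that $M$ is in fact a group: for any $a\in M$, we have $-a=(pq-1)a+(-pq)a'$-type combinations. More concretely, $-a\cdot p q\in M$ and $a(pq-1)\in M$ give $-a=a(pq-1)+(-apq)$ when $a>0$, and symmetrically when $a<0$. Hence $M=\langle M\rangle=\Z$ and $\Gap(A)=\emptyset$.
\end{itemize}

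The main obstacle is this last step, namely checking carefully that every negative of an element of $M$ lies in $M$. The case split itself is routine, and the Frobenius-number fact, that a submonoid of $\N$ with gcd $1$ has finite complement, will be quoted as standard: one picks finitely many generators with gcd $1$ and writes $1=\sum c_ia_i$.
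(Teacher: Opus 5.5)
Your proposal is correct and follows the paper's proof: (1)$\Leftrightarrow$(2)$\Leftrightarrow$(3) comes from Theorem \ref{main} using $M(S^{-1}A)=d\Z$, (4)$\Rightarrow$(1) from Theorem \ref{fin-gap}, and (2)$\Rightarrow$(4) from the same three-way sign split. The only difference is the mixed-sign case, where your identity $-a=(pq-1)a+a(-pq)$, based on $\pm pq\in M$, replaces the paper's Lemma \ref{submonoid} (which divides by the least positive element of $M$); you should drop the premature aside that $M$ contains $\langle p,q\rangle$, since it is not yet justified at that point and you never use it.
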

\begin{proof}
Let $d=\gcd(a \ | \ a\in M(A))$. Then 
\[
M(S^{-1}A)\cong\sum_{a\in M(A)}\Z a=d\Z.
\] 
It follows that $d=1$ if and only if $M(B)/M(S^{-1}A)$ is $\Z$-torsion free. Therefore, the equivalence of (1), (2) and (3) follows from Theorem \ref{main}. 

\medskip 
{\bf (2)} $ \Longrightarrow$  {\bf (4).} 
Suppose that (2) holds. If $A\subset R[x]$, then $M(A)\subset \N$. Since $d=1$, $M(A)$ is a numerical semigroup. Therefore, $\Gap(A)=\N\setminus M(A)$ must be finite (see e.g., \cite[Lemma 2.1]{RG}). In the case where $A\subset R[x^{-1}]$, similar argument implies that $\Gap(A)$ is finite. Now, we suppose that $A\not\subset R[x]$ and $A\not\subset R[x^{-1}]$. Since the monoid $M(A)$ contains positive and negative integers, it follows from Lemma \ref{submonoid} below that $M(A)$ should be a group and hence $M(A)=d\Z=\Z=C(A)$. Therefore, $\Gap(A)=\emptyset$. 

\end{proof}

\begin{lem} \label{submonoid}
Let $M$ be a submonoid of $\Z$. If $M$ contains positive and negative integers, then $M=d\Z$ for some $d\in\Z$. 
\end{lem}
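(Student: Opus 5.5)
Let $p\in M$ be a positive element and $q\in M$ a negative element. The plan is first to show that $M$ is closed under negation, so that $M$ is a subgroup of $\Z$. Once this is known, the classical fact that every subgroup of $\Z$ is cyclic gives $M=d\Z$, where one may take $d$ to be the least positive element of $M$.

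For closure under negation, fix $m\in M$ with $m>0$. Since $q<0$, we can write $-m = m\cdot(-1)$. Here $-1$ itself need not lie in $M$, so we argue with multiples instead. The element $(-q)\,m$ is a positive multiple of $m$, and $m\cdot q = -(m\cdot|q|)$ lies in $M$ because it is the sum of $m$ copies of $q$. Hence
\[
-m = mq + (|q|-1)m,
\]
and both summands lie in $M$: the first is $m$ copies of $q$, and the second is $|q|-1\geq 0$ copies of $m$. So $-m\in M$.

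Symmetrically, if $m\in M$ with $m<0$, then $-m = |m|\,p + (p-1)m$. This lies in $M$, since $|m|\,p$ is $|m|$ copies of $p$ and $(p-1)m$ is $p-1\geq 0$ copies of $m$. The case $m=0$ is trivial. Therefore $-M\subset M$, and $M$ is a subgroup of $\Z$.

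Finally, since $M$ is a nonzero subgroup of $\Z$, division with remainder by its least positive element $d$ shows $M=d\Z$. The only step needing any care is the sign bookkeeping in the two identities above; everything else is routine.
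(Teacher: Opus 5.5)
Your proof is correct, but it reaches the group property by a different route than the paper.

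The paper never proves $-M\subset M$ for arbitrary elements. It takes $d$ to be the least positive element of $M$ and any $a>0$ with $-a\in M$. It chooses $\ell>0$ with $-a+\ell d\in\{0,\ldots,d-1\}$, uses minimality of $d$ to force $a=\ell d$, and then writes $-d=-a+(\ell-1)d\in M$. This gives $d\Z\subset M$, and a second division-with-remainder argument gives $M\subset d\Z$.

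You instead use a single cancellation identity: $|q|$ copies of $m$ plus $m$ copies of $q$ sum to $0$. From one fixed positive and one fixed negative element, this yields $-m\in M$ for every $m\in M$, with no minimality or remainder argument. You then quote the classification of subgroups of $\Z$.

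Your version separates the two ingredients cleanly: a submonoid of $\Z$ meeting both signs is a subgroup, and subgroups of $\Z$ are cyclic. It is also slightly shorter. The paper's version is self-contained and builds the generator $d$ from the start, in effect reproving the subgroup classification along the way.
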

\begin{proof}
Let $d:=\min\{ a\in M \ | \ a>0\}$. For a positive integer $a\in\Z_{>0}$ with $-a\in M$, there exists $\ell\in\Z_{>0}$ such that $-a+\ell d\in\{0,1,\ldots,d-1\}$. As $-a+\ell d\in M$, the minimality of $d$ implies that $a=\ell d$. Thus,
\[
-d=-\ell d+(\ell -1)d=-a+(\ell -1)d\in M, 
\]
which implies that $d\Z\subset M$. 
For any $b\in M$, there exists $q\in\Z$ such that $b-qd\in\{0,1,\ldots,d-1\}$. Since $b-q d\in M$, we have $b=qd\in d\Z$, hence $M=d\Z$.
\end{proof}

By reinterpreting Theorem \ref{one var} in terms of monoid algebras, we have the following result. 

\begin{cor} \label{numerical semigroup}
Let $M$ be a submonoid of $\N$. Then $R[M]$ is qfc in $B=R[x^{\pm1}]$ if and only if $M$ is numerical semigroup. 
\end{cor}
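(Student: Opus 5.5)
The plan is to reduce the corollary directly to Theorem \ref{one var}, applied to $A:=R[M]\subset B=R[x^{\pm1}]$. Since $A$ is generated by the monic monomials $x^a$ with $a\in M$, we have $M(A)=M$. I will treat the degenerate case first. If $M=\{0\}$, then $A=R$, and the hypothesis $A\neq R$ of Theorem \ref{one var} fails. Here $R$ is qfc in $B$ by Proposition \ref{coeff}, while $\{0\}$ is not a numerical semigroup, since $\N\setminus\{0\}$ is infinite. So this case has to be handled separately, presumably by adopting the convention that excludes $M=\{0\}$; the paper's statement tacitly assumes $R[M]\neq R$. I will note this and assume $M\neq\{0\}$, so that $A\neq R$.

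For $M\neq\{0\}$, Theorem \ref{one var} gives that $A$ is qfc in $B$ if and only if $\gcd(a\mid a\in M)=1$. It then remains to check the standard fact that a submonoid $M\subset\N$ satisfies $\gcd(M)=1$ if and only if $\N\setminus M$ is finite.

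\textbf{Finite complement implies gcd one.} If $\N\setminus M$ is finite, then $M$ contains two consecutive integers $m$ and $m+1$ for $m$ large. Hence $\gcd(M)$ divides $(m+1)-m=1$.

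\textbf{Gcd one implies finite complement.} If $\gcd(M)=1$, I will show $\N\setminus M$ is finite; this is the only step needing an argument. Choose finitely many $a_1,\dots,a_k\in M$ with $\gcd=1$, and write $1=\sum c_ia_i$ with $c_i\in\Z$. Split this as $1=p-q$, where $p,q\in M$ collect the positive and negative parts respectively. If $q=0$, then $1\in M$ and $M=\N$. Otherwise, every $n\ge q(q-1)$ can be written as $n=\ell q+j$ with $0\le j<q$ and $\ell\ge q-1\ge j$. Then
\[
n=(\ell-j)q+jp\in M.
\]
Alternatively, one can cite \cite[Lemma 2.1]{RG}, or use Theorem \ref{one var}(4), since $\Gap(A)=\N\setminus M$ when $M\subset\N$ and $M\neq\{0\}$, because then $C(A)=\N$.

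Combining these pieces gives the corollary. The main obstacle is only the bookkeeping of the trivial case $M=\{0\}$, which conflicts with the literal statement. I would flag that this case must be excluded, by the implicit assumption $R[M]\neq R$.
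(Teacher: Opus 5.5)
Your proof is correct and follows the paper, which obtains the corollary simply by reading Theorem~\ref{one var} for $A=R[M]$. There $M(A)=M$, and for $M\neq\{0\}$ one has $\Gap(A)=\N\setminus M$; your explicit argument that $\gcd(M)=1$ if and only if $\N\setminus M$ is finite just spells out what the paper leaves to \cite[Lemma 2.1]{RG}.

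Your caveat is also right. For $M=\{0\}$ one has $R[M]=R$, which is qfc in $B$ by Proposition~\ref{coeff}, while $\{0\}$ is not a numerical semigroup. So the statement tacitly requires $M\neq\{0\}$, inherited from the hypothesis $A\neq R$ in Theorem~\ref{one var}.
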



\section{pfc/qfc subalgebras of $R^{[\pm1]}$}

In this section, we consider general subalgebras that are not necessarily generated by monic monomials in one variable. 
Let $A\subset B:=R[x^{\pm1}]$ such that $M(A)\not=\{0\}$. Let $S:=A\cap B^*$ and $d:=\gcd(a \ | \ a\in M(A))$. Then $M(S^{-1}A)=d\Z$. 

The following theorem generalizes Theorem \ref{one var} to the case where $A$ is not necessarily generated by monic monomials. 

\begin{thm} \label{d-th root} Let $A$, $S$, $B$ and $d$ be as above. 
Suppose that $R$ contains a field $k$ such that $|k|\geq d$. Then the following conditions are equivalent. 
\begin{enumerate}
\item $A$ is qfc in $B$. 
\item $d=1$. 
\item $S^{-1}A=B$. 
\item $\Gap(A)$ is a finite set. 
\end{enumerate}
\end{thm}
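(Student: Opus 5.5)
The plan is to prove the cycle (2) $\Rightarrow$ (3) $\Rightarrow$ (1) $\Rightarrow$ (2), and then attach (4) separately.

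\textbf{The cycle (2) $\Rightarrow$ (3) $\Rightarrow$ (1).} Since $M(S^{-1}A)=d\Z$, the localization $S^{-1}A$ contains $x^{\pm d}$. If $d=1$, then $x^{\pm1}\in S^{-1}A$, so $S^{-1}A=B$. Conversely, if $S^{-1}A=B$, then $1\in M(S^{-1}A)=d\Z$, so $d=1$; this gives (2) $\Leftrightarrow$ (3). Since $B$ is trivially qfc in $B$, Proposition \ref{new criterion} gives (3) $\Rightarrow$ (1).

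\textbf{The step (1) $\Rightarrow$ (2).} This is where the hypothesis on $k$ is used. Suppose $d>1$ and $A$ is qfc. By Proposition \ref{new criterion}, $A':=S^{-1}A$ is qfc in $B$, and $A'\supset R[x^{\pm d}]$. I would pick $d$ elements $\lambda_1,\ldots,\lambda_d\in k$, which is possible since $|k|\ge d$ (when $\lambda_i$ must be nonzero, $d-1$ nonzero ones together with $0$ are at hand). From them I would build an element of $R[x^{\pm d}]\subset A'$ that factors in $B$, as in the proof of Theorem \ref{main}, starting from
\[
x^{d}-1=(x-1)(1+x+\cdots+x^{d-1}),
\]
or from a suitable product of factors $x-\lambda_i$. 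The qfc property then gives some $b\in\Z$ and $v\in B^*$ with $x^b(x-1)\in A'$. I would then show that $A'$ cannot contain such an element together with units only of degree in $d\Z$. The idea is to use that every unit of $A'$ is $r x^{dm}$ with $r\in R^*$, and to compare $A'$ with its images under the substitutions $x\mapsto \lambda x$ for $\lambda\in k^*$, which fix $R[x^{\pm d}]$ exactly when $\lambda^d=1$. Turning this into a genuine contradiction, namely showing that the qfc factorization forces a monomial $x^e\in A'$ with $d\nmid e$, is the step I expect to be the main obstacle. I would first try to handle the case where $k$ contains $d$ distinct $d$-th roots of unity, and then reduce the general case to it.

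\textbf{The condition (4).} For (4) $\Rightarrow$ (1) I would simply invoke Theorem \ref{fin-gap}. For (2) $\Rightarrow$ (4), the natural case split is the one from the proof of Theorem \ref{one var}.
\begin{enumerate}
\item If $M(A)$ contains both positive and negative integers, Lemma \ref{submonoid} gives $M(A)=\Z$, so $\Gap(A)=\emptyset$.
\item Otherwise, say $M(A)\subset\N$. Then $M(A)$ is a numerical semigroup, so $x^j\in A$ for all $j\ge c$. One must then show $S(A)\subset\N$, so that $\Gap(A)=\N\setminus M(A)$ is finite. The natural tool is to strip off from $f\in A$ all terms of exponent $\ge c$, which are individually in $A$.
\end{enumerate}

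I must flag a serious doubt about case 2. With $R=k[t]$ and $A=R[x^2,x^3,tx^{-1}]$, every negative-exponent term of an element of $A$ carries a factor $t$, so $M(A)\subset\N$ while $d=1$ and $C(A)=\Z$. That would make $\Gap(A)$ infinite. So before writing (2) $\Rightarrow$ (4) I would recheck the definitions, or the intended hypotheses on $R$, since this direction may fail as literally stated.
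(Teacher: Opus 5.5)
\textbf{Gap in (1)$\Rightarrow$(2).} Your arguments for (2)$\Leftrightarrow$(3), (3)$\Rightarrow$(1) and (4)$\Rightarrow$(1) are fine. However, (1)$\Rightarrow$(2) is not proved, and the strategy you sketch would not close it.

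A single factorization is not contradictory on its own. Take $A'=\mathbb{Q}[x^{\pm3},y]$ with $y=x^2-x$. Since $y^3=(x^6-x^3)-3x^3y$, every element of $A'$ has the form $\alpha+\beta y+\gamma y^2$ with $\alpha,\beta,\gamma\in\mathbb{Q}[x^{\pm3}]$. Comparing coordinates in the basis $1,x,x^2$, having $x$ or $x^2$ in $A'$ would force $\gamma(1+x^3)=1$, which is impossible. So $A'$ contains $x(x-1)$ while $M(A')=3\Z$.

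Your fallback via $d$-th roots of unity is also unsupported. The field $k$ need not contain them, and qfc is not obviously preserved under extension of scalars.

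The missing idea is a pigeonhole argument over $d-1$ scalars; this is exactly how $|k|\ge d$ is used.
\begin{enumerate}
\item Choose distinct $\lambda_1,\dots,\lambda_{d-1}\in k\setminus\{0\}$. Since $x^d-\lambda_i^d=(x-\lambda_i)(x^{d-1}+\lambda_ix^{d-2}+\cdots+\lambda_i^{d-1})$ lies in $S^{-1}A$, qfc gives $x^{a_i}(x-\lambda_i)\in S^{-1}A$, because units of $B$ are $rx^a$ with $r\in R^*$.
\item Since $x^{\pm d}\in S^{-1}A$, you may take $0\le a_i\le d-1$.
\item If $a_i=0$, then $x\in S^{-1}A$. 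If $a_i=d-1$, then $x^{d-1}=\lambda_i^{-1}\bigl(x^d-x^{d-1}(x-\lambda_i)\bigr)\in S^{-1}A$. Both contradict $M(S^{-1}A)=d\Z$.
\item So all $d-1$ exponents lie in $\{1,\dots,d-2\}$. This is impossible for $d=2$, and otherwise gives $a_i=a_j$ with $i\neq j$.
\item Then $x^{a_i}(x-\lambda_i)-x^{a_i}(x-\lambda_j)=(\lambda_j-\lambda_i)x^{a_i}$ puts $x^{a_i}$ into $S^{-1}A$ with $d\nmid a_i$, again a contradiction.
\end{enumerate}

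\textbf{The condition (4).} Your doubt about (2)$\Rightarrow$(4) is justified, and your example is a valid counterexample to the statement as written. In $A=R[x^2,x^3,tx^{-1}]$ with $R=k[t]$, the coefficient of $x^e$ for $e<0$ or $e=1$ in any element of $A$ is divisible by $t$. Hence $M(A)=\{0,2,3,\dots\}$, $d=1$, $S^{-1}A=B$, and $A$ is qfc. On the other hand $S(A)=C(A)=\Z$, so $\Gap(A)=\{1,-1,-2,\dots\}$ is infinite.

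The paper handles (2)$\Rightarrow$(4) only by pointing to the argument of Theorem \ref{one var}. That argument relies on $S(A)=M(A)$ for algebras generated by monic monomials: there, $A\not\subset R[x]$ and $A\not\subset R[x^{-1}]$ forces $M(A)$ to contain integers of both signs. Your $A$ lies in neither $R[x]$ nor $R[x^{-1}]$, yet $M(A)\subset\N$, so this step breaks.

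The direction does hold when $R$ is a field. Suppose $M(A)\subset\N$ has Frobenius number $F$ (with $F=-1$ if $M(A)=\N$), and some $g\in A$ has $\ord(g)=-m<0$ with lowest coefficient $c$.
\begin{enumerate}
\item Then $x^{F+m}\in A$.
\item Every term of $x^{F+m}g$ of exponent $>F$ lies in $A$. Subtracting these terms leaves $cx^F\in A$.
\item Since $c$ is invertible, $x^F\in A$, a contradiction.
\end{enumerate}
Hence $S(A)\subset\N$, and $\Gap(A)=\N\setminus M(A)$ is finite. Your stripping idea needs this shift. It also needs $c$ to be invertible, which is precisely what fails over $k[t]$.
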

\begin{proof}
The implications {(3)} $ \Longrightarrow$ {(1)} and  {(4)} $ \Longrightarrow$ {(1)} follow from Proposition \ref{new criterion} and Theorem \ref{fin-gap} respectively. 

\medskip
{\bf (1)} $ \Longrightarrow$  {\bf (2).}  
Suppose that $A$ is qfc in $B$. By Proposition \ref{new criterion}, $S^{-1}A$ is also qfc in $B$. To derive a contradiction, we assume that $d\geq2$. Since $M(S^{-1}A)=d\Z$, every monic monomial belonging to $S^{-1}A$ is of the form $x^{md}$ for some $m\in\Z$. 

Let $\lambda_1,\lambda_2,\ldots,\lambda_{d-1}\in k\setminus\{0\}$ be distinct non-zero elements of $k$. Let $i\in\{1,2,\ldots,d-1\}$. Then
\[
(x-\lambda_i)(x^{d-1}+\lambda_ix^{d-2}+\cdots+\lambda_i^{d-2}x+\lambda_i^{d-1})=x^{d_i}-\lambda_i^{d_i}\in S^{-1}A. 
\]
As $S^{-1}A$ is qfc in $B$, there exists $a_i\in\Z$ such that $x^{a_i}(x-\lambda_i)\in S^{-1}A$. Since $R[x^{\pm d}]\subset S^{-1}A$, by reducing each $a_i$ modulo $d$, we may assume that $0\leq a_i\leq d-1$. 
If $a_i=0$, then $x=(x-\lambda_i)+\lambda_i\in S^{-1}A$, which is a contradiction. If $a_i=d-1$, then $x^{d-1}=\lambda_i^{-1}(x^d-x^{d-1}(x-\lambda_i))\in S^{-1}A$, which is also a contradiction. 
Hence, $a_i\in\{1,2,\ldots,d-2\}$, that is, there are only $d-2$ possibilities. Therefore, there exist $1\leq i<j\leq d$ such that $a_i=a_j$. Then
\[
x^{a_i}=\dfrac{1}{\lambda_j-\lambda_i}(x^{a_i}(x-\lambda_i)-x^{a_j}(x-\lambda_j))\in S^{-1}A,
\]
which is a contradiction. 

Therefore, we have that $d=1$. 

\medskip 
{\bf (2)} $ \Longrightarrow$  {\bf (3).} Since $M(S^{-1}A)=d\Z$, we have
\[
R[x^{\pm d}]=R[M(S^{-1}A)]\subset S^{-1}A\subset B. 
\]
Therefore, if $d=1$, then $S^{-1}A=B$. 

\medskip 
{\bf (2)} $ \Longrightarrow$  {\bf (4).} The assertion follows from the same arguments as in the proof of Theorem \ref{one var}, the part {(2)} $ \Longrightarrow$  {(4)}. 
\end{proof}

\begin{cor} \label{d-th root pfc}
Suppose that $R$ contains a field $k$ such that $|k|\geq d$. Then the following conditions are equivalent. 
\begin{enumerate}
\item $A$ is pfc in $B$. 
\item $A=R[x^{\delta}]$, where $\delta\in\{-1,1,\pm1\}$. 
\end{enumerate}
\end{cor}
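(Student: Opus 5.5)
The implication (2) $\Rightarrow$ (1) is already done: if $A=R[x^{\delta}]$ with $\delta\in\{-1,1,\pm1\}$, then $A$ is pfc in $B$ by Proposition \ref{pfc} with $n=1$. So the work is in (1) $\Rightarrow$ (2), and my plan there is as follows.

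First, since pfc implies qfc, Theorem \ref{d-th root} gives $d=1$ and $S^{-1}A=B$. There are now two cases for the monoid $M(A)$.
\begin{enumerate}
\item If $M(A)$ contains both positive and negative integers, Lemma \ref{submonoid} forces $M(A)=d\Z=\Z$. Then $x^{\pm1}\in A$, so $A=B=R[x^{\pm1}]$.
\item Otherwise, applying the automorphism $x\mapsto x^{-1}$ if necessary (Lemma \ref{autom}), we may assume $M(A)\subset\N$. Since $d=1$, $M(A)$ is a numerical semigroup. It then remains to prove $M(A)=\N$, so that $R[x]\subset A$, and $A\subset R[x]$.
\end{enumerate}

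To show $M(A)=\N$, I will run the argument of Theorem \ref{d-th root} with pfc in place of qfc. Let $m$ be the smallest positive element of $M(A)$ and suppose $m\ge 2$. For each $\lambda\in k^*$ we have
\[
(x-\lambda)(x^{m-1}+\lambda x^{m-2}+\cdots+\lambda^{m-1})=x^m-\lambda^m\in A,
\]
so pfc gives an integer $b_\lambda$ with $x^{b_\lambda}(x-\lambda)\in A$ and $x^{-b_\lambda}(x^{m-1}+\cdots+\lambda^{m-1})\in A$. The single exponent $b_\lambda$ controls both factors, and this is the advantage of pfc over qfc here.
\begin{itemize}
\item If two values $\lambda\neq\mu$ give the same $b$, subtracting yields $x^{b}\in A$ and hence also $x^{b+1}\in A$. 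Then $b$ and $b+1$ both lie in $M(A)$.
\item By comparing the supports of the two pfc factors with the minimality of $m$, I hope to pin $b_\lambda$ into a range of size less than $|k^*|$. If $b_\lambda$ takes only such few values, pigeonhole gives a collision and a contradiction with $m\ge2$. In the extreme cases $b_\lambda=0$ or $b_\lambda=-(m-1)$ one gets $x\in A$ or $x^{m-1}\in A$ directly, exactly as in Theorem \ref{d-th root}.
\end{itemize}
If this bound fails, I will instead work in the conductor region. Choose $N$ with $N,N+1,\dots\in M(A)$, apply pfc to $x^{N+1}-\lambda^{N+1}$, and propagate downward: whenever $x^{j+1}\in A$ and $x^{j}(x-\lambda)\in A$, we get $x^j\in A$.

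For $A\subset R[x]$: suppose some $f\in A$ has $\ord(f)=-e<0$. Since $x^n\in A$ for all $n\ge0$, multiplying by $x^n$ keeps us in $A$. The plan is to produce a monic $x^{-c}$ with $c>0$ in $A$, which contradicts $M(A)\subset\N$. To do this, apply pfc to the product $f\cdot x^{e}$, which lies in $A$ and has order $0$. Then exploit that the unit $u=x^b$ must make both $x^b f$ and $x^{e-b}$ lie in $A$; the latter forces $b\le e$. Iterating with suitable factorizations that split off the lowest-degree monomial of $f$ should yield the contradiction.

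The main obstacle is the bound on $b_\lambda$ in the proof of $M(A)=\N$. Unlike Theorem \ref{d-th root}, $A$ is not generated by monomials, so the supports of elements of $A$ need not lie in $M(A)$, and reducing $b_\lambda$ modulo $m$ is not available. I would first try to use both pfc factors simultaneously to confine $b_\lambda$ to $\{0,\dots,m-1\}$, and only then use the hypothesis $|k|\ge d$ and pigeonhole.
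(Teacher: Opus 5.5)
Your direction (2)$\Rightarrow$(1) is fine, and so is your case where $M(A)$ contains integers of both signs. The remaining case, however, has a genuine gap: neither of its two steps is proved, and each depends on the other.

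\begin{itemize}
\item Confining $b_\lambda$ requires knowing that the supports of $x^{b_\lambda}(x-\lambda)$ and $x^{-b_\lambda}(x^{m-1}+\cdots+\lambda^{m-1})$ lie in $\N$, i.e.\ that $A\subset R[x]$.
\item But your argument for $A\subset R[x]$ already assumes $x^n\in A$ for all $n\ge 0$, i.e.\ $M(A)=\N$.
\end{itemize}

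The pigeonhole step also cannot work as planned. Once Theorem \ref{d-th root} gives $d=1$, the hypothesis $|k|\ge d$ says only $|k|\ge 1$. A forced collision among the $b_\lambda\in\{1,\ldots,m-2\}$ would need $m-1$ distinct elements of $k^*$, where $m$ is the smallest positive element of $M(A)$. This $m$ has nothing to do with $d$; for example, take $k=\mathbb{Z}/2\mathbb{Z}$ and $m\ge 3$. Finally, applying pfc to $f\cdot x^{e}$ yields no information: when $x^e\in A$, the trivial unit $u=1$ already satisfies the pfc condition for that product.

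The missing idea is to use the other conclusion of Theorem \ref{d-th root}, namely that $\Gap(A)$ is finite, and to split on $A$ rather than on $M(A)$, as the paper does.

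\begin{itemize}
\item If $A\not\subset R[x]$ and $A\not\subset R[x^{-1}]$, then $S(A)$ has elements of both signs, so $C(A)=\Z$. Finiteness of $\Gap(A)$ then puts positive and negative integers in $M(A)$. Lemma \ref{submonoid} with $d=1$ gives $M(A)=\Z$, so $A=B$. In particular, $M(A)\subset\N$ already forces $A\subset R[x]$, which is the step you could not do.
\item If $A\subset R[x]$, take $a>0$ in $M(A)$ and apply pfc once, with $\lambda=1$, to $x^a-1=(x-1)(1+x+\cdots+x^{a-1})$. Both factors have nonzero constant term and both resulting products must lie in $R[x]$, so the unit $x^b$ satisfies $b\ge 0$ and $-b\ge 0$. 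Thus $b=0$ and $x-1\in A$, hence $x\in A$ and $A=R[x]$.
\end{itemize}

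This gives $M(A)=\N$ with no pigeonhole and no further use of $k$. Your two steps need to be done in the opposite order, with $A\subset R[x]$ obtained first from the gap set.
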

\begin{proof}
The implication {(2)} $ \Longrightarrow$  {(1)} follows from Proposition \ref{pfc}. 

\medskip
{\bf (1)} $ \Longrightarrow$  {\bf (2).}  
Suppose that $A$ is pfc in $B$. Then it is also qfc in $B$. By Theorem \ref{d-th root}, $\Gap(A)$ is a finite set and $M(S^{-1}A)=\Z$. 
If $A\subset R[x]$, then there exists $a\in \N$ such that $a\in M(A)$, that is, $x^a\in A$. Then, 
\[
(x-1)(x^{a-1}+\cdots+x+1)=x^a-1\in A. 
\]
The pfc condition implies that there exists $m\in\Z$ such that \[
x^m(x-1), \: \: x^{-m}(x^{a-1}+\cdots+x+1)\in A.
\]
Since $A\subset R[x]$, $m$ must be $0$, hence $x\in A$. Therefore, $A=R[x]$. 
If $A\subset R[x^{-1}]$, then there exists $a\in \N$ such that $x^{-a}\in A$. Thus, a similar argument shows that $A=R[x^{-1}]$. 

Suppose that $A\not\subset R[x]$ and $A\not\subset R[x^{-1}]$. Since $C(A)=\Z$ and $\Gap(A)$ is finite, there are $a,b\in\N$ such that $a,-b\in M(A)$. By Lemma \ref{submonoid}, $M(A)$ is a group, hence $M(A)=M(S^{-1}A)=\Z$. Therefore, $x,x^{-1}\in A$ and hence $A=R[x^{\pm1}]$. 
\end{proof}

In Theorem \ref{d-th root}, we assume that $M(A)\not=\{0\}$. On the other hand, there is an example of a qfc subalgebra $A$ such that $M(A)=\{0\}$ as below. 

\begin{ex}\label{niceex}
Let $k=\mathbb{Z}/2\mathbb{Z}$, $B:=k[x^{\pm1}]$ and  
\[
A:=k[f\in k[x] \ | \ \text{$f$ is irreducible of degree $\geq 2$}].
\]
Then the following assertions hold true. 
\begin{enumerate} 
\item There are no monomials in $A$, hence $M(A)=\{0\}$.  
\item $\Gap(A)$ is infinite. 
\item $A$ is qfc in $B$.
\end{enumerate}
\end{ex}
\begin{proof}
Let $f\in k[x]$ be an irreducible polynomial of degree $\geq2$. Then $f(0)=f(1)=1$. 

\medskip
{\bf (1)} Assume that there exists $n\geq1$ such that $x^n\in A$. Then there are $m\geq1$ and irreducible polynomials $f_1,\ldots,f_m\in k[x]$ of degree at least two such that
\[
x^n=\sum_{\bm{a}\in\N^m}\delta_{\bm{a}}f_1^{a_1}\cdots f_m^{a_m}, 
\]
where $\bm{a}=(a_1,\ldots,a_m)\in\N^m$, $\delta_{\bm{a}}\in k$ and $I:=\{\bm{a} \ | \ \delta_{\bm{a}}\not=0\}$ is a finite set. Let $\epsilon_{\delta}:k[x]\to k$ be the evaluation map defined by $\epsilon_{\delta}(x)=\delta$, where $\delta\in k=\{0,1\}$. Then
\[
0=\epsilon_{0}(x^n)=\sum_{\bm{a}\in I}\delta_{\bm{a}}f_1(0)^{a_1}\cdots f_m(0)^{a_m}=\sum_{\bm{a}\in I}1 
\]
and 
\[
1=\epsilon_{1}(x^n)=\sum_{\bm{a}\in I}\delta_{\bm{a}}f_1(1)^{a_1}\cdots f_m(1)^{a_m}=\sum_{\bm{a}\in I}1. 
\]
This is a contradiction. Therefore, $A$ does not contain monomials and hence $M(A)=\{0\}$. 
\medskip

{\bf (2)} Since $C(A)=\N$, $\Gap(A)=\N\setminus\{0\}$ is infinite. 
\medskip

{\bf (3)} Let $f\in B\setminus\{0\}$. Then there exists $n\geq0$ such that $x^nf\in k[x]$. Taking the irreducible decomposition of $x^nf$ in $k[x]$, we obtain
\[
x^nf=x^{a}(x+1)^bg_1^{c_1}\cdots g_m^{c_m}, 
\]
where $m\geq0$, $a,b,c_i\in \N$ and each $g_i\in k[x]$ is irreducible of degree at least two. Since $x^2+x+1$ is irreducible in $k[x]$, 
\[
x(x+1)=x^2+x=(x^2+x+1)+1\in A. 
\]
Therefore, 
\[
x^{n-a+b}f=(x(x+1))^bg_1^{c_1}\cdots g_m^{c_m}\in A. 
\]
By Lemma \ref{strong qfc}, $A$ is qfc in $B$. 
\end{proof}

\section{A Question of Chakraborty, Gurjar and Miyanishi}

In \cite{CGM15}, Chakraborty, Gurjar and Miyanishi gave the following question. 

\begin{question} \label{CGM}
{\rm 
Is any fc subalgebra of a PID also a PID?
}
\end{question}
In 2019, Chakraborty, Gurjar and Mondal \cite{CGM19} gave an example which showed that some conditions were necessary for the above question to be true. More precisely, they proved that 
any countable UFD can be embedded in a countable PID as an fc subalgebra (see \cite[Theorem 2.2]{CGM19}). 
However, the following more general theorem had already been established by Cohn in 1968 (see \cite[Theorem 3.4 and Corollary 1]{Coh68}).

\begin{thm}
For any UFD $A$, there exists a PID $B$ such that $A$ is fc in $B$. 
\end{thm}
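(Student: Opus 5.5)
We want, for a UFD $A$, a PID $B$ containing $A$ with $A$ fc in $B$. The plan is to localize $A[t]$ at a suitable multiplicative set so that every nonunit of $A$ stays "non-invertible" while the result becomes a PID. Throughout, let $K=Q(A)$.

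\textbf{Step 1: the Cohn--Nagata construction.} Let $A[t]\cong A^{[1]}$, which is a UFD by Gauss's lemma. Let $T\subset A[t]$ be the multiplicative set of \emph{primitive} polynomials, i.e.\ those whose coefficients have no common prime factor in $A$; $T$ is multiplicatively closed by Gauss's lemma. Set $B:=T^{-1}A[t]$ (when $A$ is a field one can take $B=A[t]$, or simply $B=A$).

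\textbf{Step 2: $B$ is a PID.} I would argue via Nagata's criterion. $B$ is a UFD, being a localization of a UFD. Its prime elements are the primes $p$ of $A$, because irreducible polynomials of positive degree are primitive and hence become units. So every nonzero element of $B$ is a unit times a product of primes of $A$. It then remains to show every ideal is principal. I would first show that for any finite set $f_1,\dots,f_m\in B$ with no common prime factor, the ideal $(f_1,\dots,f_m)$ is the unit ideal: clearing denominators, we may take $f_i\in A[t]$, and then $f_1+tf_2+\cdots+t^{N(m-1)}f_m$ with $N$ large is primitive. Here the main obstacle is a careful content check: the coefficients of this combination are exactly all coefficients of all $f_i$ when $N$ exceeds the degrees, so the gcd condition transfers. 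For a general ideal, factor out the gcd (this uses the ACC on principal ideals in the UFD $B$) to reduce to the coprime case; I expect this to give principality.

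\textbf{Step 3: $A$ is fc in $B$.} Let $a,b\in B\setminus\{0\}$ with $ab\in A$. The first task is to show $B^*\cap K=A^*$ and, more precisely, that $B\cap K=A$: if $g/h\in K$ with $h$ primitive, then $g=ch$ with $c\in K$, and primitivity of $h$ together with Gauss forces $c\in A$. Next, write $a=u\,\alpha$ and $b=v\,\beta$ with $u,v\in B^*$ and $\alpha,\beta$ products of primes of $A$. Then $ab=uv\,\alpha\beta\in A$ gives $uv\in B^*\cap K=A^*$. But units of $B$ are quotients $h_1/h_2$ of primitive polynomials, so $u\in K$ fails in general, and the naive conclusion "$a\in A$" is false: for instance $t\cdot t^{-1}=1$. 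So $A$ is \emph{not} fc in this $B$; it is only pfc, which matches the theme of Lemma \ref{fc}. Step 1 therefore has to be refined to kill the extra units.

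\textbf{Revised plan.} Following Cohn, the ambient ring should satisfy $B^*=A^*$ and have each element associated to an element of $A$. I would instead take $B:=A\otimes$ an appropriate universal construction, namely Cohn's "$A$-ring in which every finitely generated ideal is principal", built by transfinite adjunction. Concretely, for each coprime pair $(f,g)$ one adjoins $x,y$ with $xf+yg=1$, checks at each stage that $A$ stays fc and that the ring stays a UFD with the same primes and units, and then takes the direct limit. The step I expect to be hardest is verifying, at each adjunction, that no new units appear and that factorial closure is preserved. I would try to prove this by a grading/degree argument analogous to Proposition \ref{coeff}.
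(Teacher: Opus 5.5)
\textbf{There is a genuine gap: the revised plan defers the entire content of the theorem and maintains an impossible invariant.} You are right to abandon your first construction. $T^{-1}A[t]$ is a PID, but $t$ becomes a unit, so $A$ is only pfc there. Your revised plan has the same overall shape as the paper's proof: adjoin Bezout identities for coprime pairs while keeping a UFD in which $A$ is fc, iterate, and take the union. As written, however, it is only an outline. The step you call the hardest---that one adjunction gives a UFD, creates no new units, and keeps $A$ fc---is exactly the content of the theorem, and you give no argument for it.

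The invariant you propose to maintain is also impossible. Suppose $B\supseteq A$ is a UFD with $B^*=A^*$ and every prime of $B$ is an associate of a prime of $A$. Then every nonzero element of $B$ lies in $A$, so $B=A$ and nothing was adjoined. Your Cohn-style requirement that $B^*=A^*$ and every element of $B$ is associated to an element of $A$ forces $B=A$ for the same reason. The correct invariant is $B^*=A^*$ together with ``primes of $A$ stay prime'', which follows from fc. Many new primes must appear.

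\textbf{The missing idea is to do the adjunction homogeneously, as the paper does.} For a coprime pair $(a,b)$, put $\ell=ax+by\in A[x,y]$ and take $A[x,y,\ell^{-1}]_0$. This ring is generated by $u=x/\ell$ and $v=y/\ell$ with $au+bv=1$, so it is your naive adjunction $A[u,v]/(au+bv-1)$ in graded form. Coprimality makes $\ell$ a prime of $A[x,y]$, and then each required property follows quickly:
\begin{itemize}
\item \emph{fc:} if $(f/\ell^s)(g/\ell^t)=c\in A\setminus\{0\}$ with $\ell\nmid f,g$, then $fg=c\ell^{s+t}$ forces $s=t=0$.
\item \emph{Unique factorization:} factor numerators into homogeneous primes (the paper's HCF-plus-atomic argument).
\item \emph{Comaximality:} $au+bv=1$.
\end{itemize}
Next take a direct limit over all pairs and iterate to get $A=\mathbb{A}_0\subset\mathbb{A}_1\subset\cdots$. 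Then apply \cite[Lemma 2.4]{CGM19}. Because the inclusions are fc, a prime of a later stage dividing an element of $\mathbb{A}_n$ already lies in $\mathbb{A}_n$. Hence pairs coprime at stage $n$ stay coprime, become comaximal, and the union is a Bezout UFD, hence a PID, in which $A$ is fc. A degree argument in the style of Proposition \ref{coeff}, applied to the ungraded quotient $A[u,v]/(au+bv-1)$, will not readily give any of this.

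\textbf{A caution when you write it out.} Use one prime linear form per pair, in fresh variables for each pair. If you invert a product $\ell_1\ell_2$ of non-associate linear forms in the same $x,y$, then $\ell_1/\ell_2=\ell_1^2/(\ell_1\ell_2)$ is a new unit of the degree-zero part, and fc fails. This already affects the paper's $\mathfrak{S}_A$ as literally written: $x/y\in A\langle (xy)^{-1}\rangle$ is a unit not in $A$. So read your sketch's phrase ``for each coprime pair one adjoins $x,y$'' with fresh variables for each pair.
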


For the reader's convenience, we describe the construction of $B$. Recall that a domain $R$ is called an {\bf HCF} ring if, for any $a,b\in R$, $aR\cap bR=cR$ for some $c\in R$. $R$ is said to be {\bf atomic} if every non-zero non-unit of $R$ can be written as a finite product of irreducible elements of $R$.
In particular, an atomic HCF ring is a UFD.

Let $A$ be a UFD and $A[x,y]$ be the polynomial ring in two variables over $A$. Here, we define
\[
\mathcal{P}_A:=\{(a,b)\in A^2 \ | \ \gcd(a,b)=1\} 
\]
and 
\[
\mathfrak{S}_A:=\left\{\left.\prod(ax+by) \ \right| \ (a,b)\in \mathcal{P}_A\right\}, 
\]
where we consider products consisting of finitely many elements, allowing repetitions and the empty product. Then $\mathfrak{S}_A$  is a multiplicatively closed set of $A[x,y]$. For $z,w\in\mathfrak{S}_A$, we define $z<w$ if $w=\xi z$ for some $\xi\in \mathfrak{S}_A$. Then $(\mathfrak{S}_A,<)$ is a direct set. For $z\in \mathfrak{S}_A$, $A[x,y,z^{-1}]$ is a graded domain with the standard grading. Here, we define
\[
A\langle z^{-1}\rangle:=A[x,y,z^{-1}]_0. 
\]
Note that $A\langle 1^{-1}\rangle=A$. For $w\in \mathfrak{S}_A$ satisfying $z<w$, we define the ring homomorphism $\varphi_{wz}:A\langle z^{-1}\rangle\to A\langle w^{-1}\rangle$ by 
\[
\varphi_{wz}\left(\dfrac{f}{\:z^m\:}\right)=\dfrac{\:{\xi}^mf\:}{{\xi}^mz^m}=\dfrac{\:{\xi}^mf\:}{w^m}
\]
where $m\in\N$ and $f\in A[x,y]$ is a homogeneous polynomial of degree $m\deg z$. Then we see that  $(A\langle z^{-1}\rangle,\varphi_{wz})$ is a direct system. Let $\A_1:=\varinjlim A\langle z^{-1}\rangle$ be the direct limit of $(A\langle z^{-1}\rangle,\varphi_{wz})$. 

\begin{lem} The following assertions hold true. 
\begin{enumerate}
\item For each $z\in\mathfrak{S}_A$, $A\langle z^{-1}\rangle$ is a UFD.
\item Let $z,w\in \mathfrak{S}_A$ such that $z<w$. By viewing $A\langle z^{-1}\rangle$ as a subalgebra of $A\langle w^{-1}\rangle$ via the injective map $\varphi_{wz}:A\langle z^{-1}\rangle\to A\langle w^{-1}\rangle$, $A\langle z^{-1}\rangle$ is fc in $A\langle w^{-1}\rangle$. In particular, if we take $z=1$, it follows that $A$ is fc in $A\langle w^{-1}\rangle$. 
\item $A$ is fc in $\A_1$ and $\A_1$ is a UFD. 
\item Every pair $(a,b)\in \mathcal{P}_A$ is comaximal in  $\A_1$, that is, $1\in a\A_1+b\A_1$. 
\end{enumerate}
\end{lem}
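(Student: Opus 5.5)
The plan is to work throughout in the $\Z$-graded ring $A[x,y,z^{-1}]$ and transport factorization questions to it. Since $A$ is a UFD, so is $A[x,y]$, and hence so is its localization $A[x,y,z^{-1}]$. Its homogeneous prime elements are exactly the homogeneous primes of $A[x,y]$ that do not divide $z$.

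\textbf{Step (1).} I would show that $A\langle z^{-1}\rangle$ is a UFD by producing its primes explicitly. Fix a linear factor $\ell=ax+by$ of $z$ with $(a,b)\in\mathcal{P}_A$ (for $z=1$ there is nothing to prove). For a homogeneous prime $p$ of $A[x,y]$ not dividing $z$, say of degree $e$, the candidate prime of $A\langle z^{-1}\rangle$ is $p/\ell^{e}$. The argument would go as follows.
\begin{enumerate}
\item Write an element of $A\langle z^{-1}\rangle$ as $f/z^m$ with $f$ homogeneous.
\item Factor $f$ in $A[x,y]$ into homogeneous primes; here Gauss's lemma ensures that the factors of a homogeneous polynomial are homogeneous.
\item Rewrite this as a product of the elements $p/\ell^{\deg p}$, times a degree-$0$ unit built from the linear factors of $z$.
\item Deduce primality of $p/\ell^{e}$ from primality of $p$ in $A[x,y,z^{-1}]$, using that $A\langle z^{-1}\rangle$ is the degree-$0$ part of that ring.
\end{enumerate}

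\textbf{Step (2).} Here $\varphi_{wz}$ is the inclusion of degree-$0$ parts induced by $A[x,y,z^{-1}]\subset A[x,y,w^{-1}]$, since $w=\xi z$. Suppose $\alpha\beta\in A\langle z^{-1}\rangle$ with $\alpha,\beta\in A\langle w^{-1}\rangle$. I would write $\alpha=f/w^m$ and $\beta=g/w^{m'}$, clear the factors of $\xi$, and compare prime factorizations in the UFD $A[x,y]$. The aim is to see that the total power of each prime divisor of $\xi$ (not dividing $z$) in the denominators cancels against the numerators. The constraint is that $fg$ must be divisible in the right way for $fg/w^{m+m'}$ to lie in $A\langle z^{-1}\rangle$. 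The case $z=1$ then gives that $A$ is fc in $A\langle w^{-1}\rangle$.

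\textbf{Main obstacle.} I expect the main obstacle to be precisely the bookkeeping of the linear factors of $\xi$. Degree-$0$ ratios of such factors, such as $(ax+by)/(cx+dy)$, are units of $A\langle w^{-1}\rangle$ whose presence must be reconciled with Lemma~\ref{fc}. I would first try to control them by tracking exponents of each linear form $ax+by$ separately. I would also check carefully whether the definition of $A\langle z^{-1}\rangle$ and of $\varphi_{wz}$ forces these ratios to be absorbed. This is the step I would verify first, since the whole of (2) and (3) hinges on it.

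\textbf{Step (3).} The fc property passes to direct limits: an equation $\alpha\beta\in A$ in $\A_1$ already lives in some $A\langle w^{-1}\rangle$, and then (2) with $z=1$ applies. For the UFD property, every element of $\A_1$ lies in some $A\langle w^{-1}\rangle$ and factors there by (1). Primes are preserved along the transition maps except those that become units, and such primes (the linear factors $ax+by$) are never introduced later. Hence factorizations stabilize and $\A_1$ is atomic and HCF, so it is a UFD.

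\textbf{Step (4).} Take $w=ax+by\in\mathfrak{S}_A$. In $A\langle w^{-1}\rangle$ we have
\[
a\cdot\frac{x}{ax+by}+b\cdot\frac{y}{ax+by}=1,
\]
so $1\in a\A_1+b\A_1$.
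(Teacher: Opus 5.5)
The gap is your Step (2), and it cannot be closed. The units $(ax+by)/(cx+dy)$ you flagged are genuine, so (2) and the fc half of (3) are false as stated. Your Steps (1) and (4) are fine. So is your account of the UFD half of (3), where primes of $A\langle z^{-1}\rangle$ coming from linear forms $ax+by$ are allowed to become units later.

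Here is a counterexample. Take $z=1$ and $w=xy$, which lies in $\mathfrak{S}_A$ because $(1,0),(0,1)\in\mathcal{P}_A$. Then $x/y=x^2/(xy)$ and $y/x=y^2/(xy)$ both lie in $A\langle w^{-1}\rangle$. Their product is $1\in A$, but $x/y\notin A$. So $A$ is not fc in $A\langle w^{-1}\rangle$, as Lemma~\ref{fc} predicts since the unit groups differ. Every $\varphi_{wz}$ is just an inclusion of subrings of $Q(A)(x,y)$, so the same pair shows that $A$ is not fc in $\A_1$.

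More generally, let $\ell_1,\ell_2$ be non-associate linear factors of $w$ with $\ell_1\nmid z$. Then $(\ell_1/\ell_2)(\ell_2/\ell_1)=1\in A\langle z^{-1}\rangle$, while $\ell_2/\ell_1\notin A\langle z^{-1}\rangle$: otherwise the prime $\ell_1$ would divide $\ell_2z^m$ for some $m$. Hence $A\langle z^{-1}\rangle$ is not fc in $A\langle w^{-1}\rangle$.

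The paper's own proof of (2) breaks at exactly this point. After writing $\alpha=f\xi^{-s}$ with $\deg f=s\deg\xi$, it says one may assume $\gcd(f,\xi)=1$. But only whole copies of $\xi$ can be cancelled, and $x/y=x^2/(xy)$ has no representation with numerator prime to $xy$. Its proof that $\A_1$ is a UFD then uses (2) to keep primes of $A\langle z^{-1}\rangle$ prime in $\A_1$. That fails for $x/y\in A\langle y^{-1}\rangle=A[x/y]$, which becomes a unit in $\A_1$. Your version is the correct one. Alternatively, note that $\A_1[x^{\pm1}]$ is the localization of the UFD $A[x,y]$ at $\mathfrak{S}_A$.

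To get a provable statement, never invert a quotient of two distinct linear forms. Instead, adjoin fresh variables for each coprime pair: pass from a UFD $C$ to $C\langle\ell^{-1}\rangle\cong C[X,Y]/(aX+bY-1)$ for a single $\ell=ax+by$ at a time. Each such step is fc. Indeed, if $\alpha=f/\ell^{s}$, $\beta=g/\ell^{t}$ and $\alpha\beta=c\in C$, then $fg=c\,\ell^{s+t}$ in $C[x,y]$ with $\ell$ prime. This forces $f\in C\ell^{s}$, hence $\alpha\in C$, and similarly $\beta\in C$.
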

\begin{proof}
{\bf (1)} For $z\in \mathfrak{S}_A$, let $R:=A\langle z^{-1}\rangle$. Then there are $m\geq 0$, $(a_i,b_i)\in \mathcal{P}_A$ and $n_i\geq1$ for $1\leq i\leq m$, such that
\[
z=\prod_{i=1}^m(a_ix+b_iy)^{n_i}, 
\]
hence 
\[
R=A{\left[x,y,\dfrac{1}{\:a_1x+b_1y\:},\ldots,\dfrac{1}{\:a_mx+b_my\:}\right]_0}.
\]
To show that $R$ is a UFD, it suffices to prove that $R$ is an atomic HCF ring. 

Let $\alpha=fz^{-s},\beta=gz^{-t}\in R\setminus\{0\}$, where $s,t\geq0$ and $f,g\in A[x,y]$ are homogeneous of degree $s\deg z$, $t\deg z$ respectively. We may assume that $\gcd(f,z)=\gcd(g,z)=1$. Let $h:=\gcd(f,g)\in A[x,y]$ and $L$ be a product of ($\deg h$)-elements of $\{a_1x+b_1x,\ldots,a_mx+b_m\}$. Then $\alpha\beta h^{-1}L\in R$ and hence 
\[
\alpha R\cap\beta R=\dfrac{\:\alpha\beta \:}{\:hL^{-1}\:}R. 
\]
Thus, $R$ is an HCF ring. 

Suppose that $\alpha$ is not a unit. Taking the prime decomposition of $f$ by $f=f_1\cdots f_r$, where each $f_i$ is homogeneous and prime in $A[x,y]$ of degree $d_i$. Then
\begin{equation} \label{irred decomp}
\alpha=\dfrac{\:f\:}{\:z^s\:}=\dfrac{\:f_1\:}{\:L_1\:}\cdots \dfrac{\:f_r\:}{\:L_r\:},
\end{equation}
where $L_i$ is a product of $d_i$-elements of $\{a_1x+b_1x,\ldots,a_mx+b_m\}$ satisfying $\gcd(f_i,L_i)=1$ and $z^s=L_1\cdots L_r$. Then $f_iL_i^{-1} $is irreducible in $R$  for each $1\leq i\leq r$. Indeed, if there are $s_j\geq0$ and homogeneous elements $g_j\in A[z,y]$ of degree $s_j\deg z$ with $\gcd(g_j,z)=1$ for $j\in\{1,2\}$, such that 
\[
\dfrac{\:f_i\:}{\:L_i\:}=\dfrac{\:g_1\:}{\:z^{s_1}\:}\dfrac{\:g_2\:}{\:z^{s_2}\:}. 
\]
Since $g_j$ and $z$ are relatively prime for $j\in\{1,2\}$, we have $z^{s_1+s_2}L_i^{-1}=1$ and hence $f_i=g_1g_2$ in $A[x,y]$. By the irreducibility of $f_i$, $g_1=1$ or $g_2=1$. Thus, $f_iL_i^{-1} $is irreducible in $R$. 

Therefore, the equation (\ref{irred decomp}) gives an irreducible decomposition of $\alpha$ in $R$. Hence, $R$ is atomic.  

\medskip
{\bf (2)} Suppose that $w=\xi z$ for some $\xi\in \mathfrak{S}_A$. Let $\alpha,\beta\in A\langle w^{-1}\rangle\setminus\{0\}$ satisfying $\alpha\beta\in A\langle z^{-1}\rangle$. Since 
\[
\alpha,\beta\in A\langle w^{-1}\rangle\subset A[x,y,w^{-1}]=A[x,y,z^{-1}][\xi^{-1}], 
\]
there are $f,g\in A[x,y,z^{-1}]$ and $s,t\geq0$ such that $\alpha={f}\xi^{-s}$ and $\beta={g}\xi^{-t}$, where $f,g$ are homogeneous of degrees $s\deg \xi$ and $t\deg \xi$ respectively. As $A[x,y,z^{-1}]$ is a UFD, we may assume that that $\gcd(f,\xi)=\gcd(g,\xi)=1$. 
Then the following equation holds in $A[x,y,z^{-1}]$: 
\[
\xi^{s+t}\alpha\beta=fg.
\] 
Since neither $f$ nor $g$ is divisible by $\xi$ in $A[x,y,z^{-1}]$, we have $s=t=0$. Therefore, $\alpha=f$ and $\beta=g$ belong to $A[x,y,z^{-1}]_0=A\langle z^{-1}\rangle$, which implies that $A\langle z^{-1}\rangle$ is fc in $A\langle w^{-1}\rangle$. 

\medskip
{\bf (3)} Let $\alpha,\beta\in \A_1\setminus\{0\}$ satisfying $\alpha\beta\in A$. We can choose $z\in\mathfrak{S}_A$ so that $\alpha,\beta\in A\langle z^{-1}\rangle$. By (2), since $A$ is fc in $A\langle z^{-1}\rangle$, we have $\alpha\in A$ and $\beta \in A$, which implies that $A$ is fc in $\A_1$. 

To show that $\A_1$ is a UFD, it suffices to prove that for each $z\in\mathfrak{S}_A$, every prime element of $A\langle z^{-1}\rangle$ remains prime in $\A_1$. Let $p\in A\langle z^{-1}\rangle$ be a prime element in $A\langle z^{-1}\rangle$. Assume that $\alpha\beta\in p\A_1$ for some $\alpha,\beta\in \A_1$. We can choose $w\in\mathfrak{S}_A$ with $z<w$ so that $p,\alpha,\beta\in A\langle w^{-1}\rangle$ and $\alpha\beta\in pA\langle w^{-1}\rangle$. Since $A\langle z^{-1}\rangle$ is fc in $A\langle w^{-1}\rangle$ by (2), $p$ is a prime element in $A\langle w^{-1}\rangle$. Therefore, $\alpha\in pA\langle w^{-1}\rangle\subset p\A_1$ or $\beta\in pA\langle w^{-1}\rangle\subset p\A_1$ and hence $p$ is prime in $\A_1$. 

\medskip
{\bf (4)} For $(a,b)\in \mathcal{P}_A$, let $z=ax+by$. Then the following equation holds in $A\langle z^{-1}\rangle$: 
\[
a\cdot\dfrac{\:x\:}{z}+b\cdot\dfrac{\:y\:}{z}=\dfrac{\:ax+by\:}{z}=1. 
\]
Therefore, $(a,b)$ is comaximal in $A\langle z^{-1}\rangle$, and hence it is also comaximal in $\A_1$. 
\end{proof} 

For $n\geq1$, let $\A_{n+1}:=\varinjlim \A_n\langle z^{-1}\rangle$ be the direct limit of the direct system $(\A_n\langle z^{-1}\rangle,\varphi_{wz})$, where $\varphi_{wz}:\A_n\langle z^{-1}\rangle\to\A_n\langle w^{-1}\rangle$ is defined by the same way for $z,w\in \mathfrak{S}_{\A_n}$ with $z<w$. Then we have the following chain of UFDs in which each inclusion is fc: 
\[
A:=\A_0\subset \A_1\subset \cdots \subset \A_n\subset \cdots. 
\]

\begin{lem} Let $B:=\bigcup_{n\geq0}\A_n$. Then $B$ is a PID and $A$ is fc in $B$. 
\end{lem}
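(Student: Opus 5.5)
The plan is in three steps: show $A$ is fc in $B$, show $B$ is a UFD, and then show $B$ is a Bezout domain. A UFD that is Bezout is a PID.

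\textbf{Fc property.} Every $\A_n\subset\A_{n+1}$ is fc by part (3) of the preceding lemma, applied with $\A_n$ in place of $A$; this is legitimate because $\A_n$ is a UFD. Let $\alpha,\beta\in B\setminus\{0\}$ with $\alpha\beta\in A$. Both lie in some $\A_N$, and an induction on $N$ along the chain $A=\A_0\subset\cdots\subset\A_N$ gives $\alpha,\beta\in A$. The same argument shows $\A_n$ is fc in $B$ for every $n$. In particular $\A_n^*=B^*\cap\A_n$ by Lemma \ref{fc}-type reasoning: if $uv=1$ with $u\in\A_n$, $v\in B$, then fc forces $v\in\A_n$.

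\textbf{$B$ is a UFD.} I would copy the proof of part (3) of the lemma. It suffices to check that every nonzero nonunit of $B$ factors into primes. An element $\alpha\in B$ lies in some $\A_n$, which is a UFD, so $\alpha$ factors there into primes $p_i$ of $\A_n$. Each $p_i$ stays prime in $B$: if $\alpha'\beta'\in p_iB$, choose $m\ge n$ with $\alpha',\beta'$ and the quotient all in $\A_m$. Since $\A_n$ is fc in $\A_m$, $p_i$ is prime in $\A_m$ (the argument of part (3)), and we are done. Each $p_i$ also stays a nonunit, by the remark on units above.

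\textbf{$B$ is Bezout.} I expect this to be the main obstacle, as it is where the construction is really used. Take $\alpha,\beta\in B\setminus\{0\}$. Because $B$ is a UFD, let $\gamma=\gcd(\alpha,\beta)$ in $B$ and write $\alpha=\gamma a$, $\beta=\gamma b$ with $\gcd(a,b)=1$ in $B$. Choose $n$ with $a,b,\gamma\in\A_n$.

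The delicate point is that $\gcd(a,b)=1$ in $B$ must imply $(a,b)\in\mathcal{P}_{\A_n}$. I would argue this from the prime factorization above: a common prime factor of $a,b$ in $\A_n$ stays prime, hence a nonunit, in $B$, which would contradict $\gcd(a,b)=1$ in $B$. Units of $\A_n$ pose no problem.

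Part (4) of the lemma, applied to $\A_n$, then makes $(a,b)$ comaximal in $\A_{n+1}\subset B$. Hence $aB+bB=B$, and so $\alpha B+\beta B=\gamma B$. Therefore every two-generated ideal is principal, and $B$ is Bezout.

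\textbf{Conclusion.} It remains to see that a Bezout UFD is a PID. Take a nonzero ideal $I$ of $B$. Among elements of $I$, choose one with the minimal number of prime factors, counted with multiplicity. For any $x\in I$, the ideal $(y,x)=(\gcd(y,x))$ is contained in $I$, and $\gcd(y,x)$ divides $y$. By minimality it is associate to $y$, so $y\mid x$. Thus $I=yB$.
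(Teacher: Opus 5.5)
Your proof is correct. It uses the same facts as the paper: each $\A_n$ is a UFD, each inclusion $\A_n\subset\A_{n+1}$ is fc, and pairs in $\mathcal{P}_{\A_n}$ become comaximal in $\A_{n+1}$ by part (4). The difference is that the paper simply cites \cite[Lemma 2.4]{CGM19} to get from these facts to the conclusion, whereas you prove that step directly:
\begin{itemize}
\item fc is transitive along the chain, which gives the fc claim and $\A_n^*=B^*\cap\A_n$;
\item primes of $\A_n$ remain prime in $B$, which shows both that $B$ is a UFD and that coprimality in $B$ descends to $\A_n$;
\item comaximality then makes $B$ Bezout, and a Bezout UFD is a PID.
\end{itemize}
The paper's route is shorter. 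Yours is self-contained and shows where the fc hypothesis on the inclusions is needed. Without it, a union of UFDs in which coprime pairs become comaximal need not be a UFD, e.g.\ $\bigcup_{i}k[x^{1/2^i}]$.

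Two small points of precision. First, fc of $\A_n$ in $\A_m$ alone only shows that a prime $p$ of $\A_n$ stays a nonunit irreducible element of $\A_m$. It is the UFD property of $\A_m$ that makes $p$ prime there, and you should say so explicitly. Second, in the last step choose $y$ among the nonzero elements of $I$.
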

\begin{proof}
$B$ can be regarded as the direct limit of the direct system defined by the inclusions $\A_i\subset \A_{i+1}$. By the construction, each $\A_i$ is a UFD and any two relatively prime elements in $\A_i$  are comaximal in $\A_{i+1}$. It follows from \cite[Lemma 2.4]{CGM19} that $B$ is a PID and $A$ is fc in $B$. 
\end{proof}

\subsection{A qfc subalgebra of a PID}

Question \ref{CGM} does not hold even under the qfc condition.  
The following examples shows that a qfc subalgebra of a PID (which is not a field) is not necessarily a PID. 

\begin{example} \label{non-normal}
{\rm
Let $k$ be a field and $A=k[x^2,x^3]\subset k[x^{\pm1}]=B$. By Theorem \ref{one var} (or Theorem \ref{d-th root}), $A$ is qfc in $B$. 
On the other hand, although $B$ is a PID, $A$ is not a PID since it is not normal (and hence not ac in $B$). 
Moreover, it follows from Corollary \ref{d-th root pfc} that $A$ is not pfc in $B$. 
}
\end{example}

\medskip

\noindent {\bf Acknowledgments.} The authors wish to thank Professor Gene Freudenburg of Western Michigan University and 
Professor Hideo Kojima of Niigata University for their helpful comments regarding this paper. 



\begin{thebibliography}{30}
\bibitem{AAZ92}
D. D. Anderson, D. F. Anderson and M. Zafrullah, 
Factorization in integral domains. II, 
J. Algebra {\bf 152} (1992), no.1, 78--93.

\bibitem{AC22}
D. D. Anderson and S. Chun, 
Inert type extensions and related factorization properties, 
J. Algebra Appl. {\bf 21} (2022), no.7, Paper No. 2350177, 27 pp.

\bibitem{BH98}
W. Bruns and J. Herzog, 
{\it Cohen-Macaulay rings}, 
revised ed., Cambridge Stud. Adv. Math., vol. 39, Cambridge University Press, Cambridge, 1998.

\bibitem{CGM15}
S. Chakraborty, R. V. Gurjar and M. Miyanishi, 
Factorially closed subrings of commutative rings, 
Algebra Number Theory {\bf 9} (5) (2015) 1137--1158.  
w
\bibitem{CGM19} 
S. Chakraborty, R. V. Gurjar and D. Mondal, 
Quasi-factorially closed subrings of commutative rings, 
J. Algebra {\bf 534} (2019), 190-206.

\bibitem{Coh68}
P. M. Cohn, 
Bezout rings and their subrings, 
Proc. Cambridge Philos. Soc. {\bf 64} (1968), 251--264.


\bibitem{Cos77}
D. Costa, 
Retracts of polynomial rings, 
J. Algebra {\bf 44} (1977) 492--502. 


\bibitem{GN}
N. Gupta and T. Nagamine,  
Retracts of Laurent polynomial rings, 
Journal of the Ramanujan Mathematical Society {\bf 40} (4) (2025), 379--383. 

\bibitem{Mon21}
D. Mondal, 
Divisor closed and quasi-divisor closed extensions, 
Comm. Algebra {\bf 49} (2021), no.8, 3477--3489.

\bibitem{RG}
J.C. Rosales and P. A. Garc\'{\i}a-S\'{a}nchez, {\it Numerical Semigroups}, Developments in Mathematics, Springer New York,  2009. 

\bibitem{Zie95}
G\"unter M. Ziegler, {\it Lectures on Polytopes}, Graduate Texts in Mathematics, vol. 152, Springer-Verlag, New York, 1995.
\end{thebibliography}
\end{document}